\newcommand*{\bibtitle}{References}
\title{Logic of Sets with Atoms}
\newacronym{ac}{AC}{Axiom of Choice}
\newacronym{ac pure}{${AC}^{pure}$}{Axiom of Choice for the pure sets}
\newacronym{agc}{AGC}{Axiom of Global Choice}
\newacronym{agc pure}{${AGC}^{pure}$}{Axiom of Global Choice for the pure sets}
\newacronym{svc}{SVC}{Small Violations of Choice}
\newacronym{inner and outer models}{Inner\\Outer Models}{If $\mathcal{N},\mathcal{M}$ are structures in the language of set theory and $\mathcal{N}$ is a substructure of $\mathcal{M}$, then $\mathcal{N}$ is an inner model of $\mathcal{M}$ and $\mathcal{M}$ is an outer model of $\mathcal{N}$}
\newcommand\varitem[1]{\item[\textbf{A\arabic{enumi}\rlap{$#1$}.}]%
  \edef\@currentlabel{A\arabic{enumi}{$#1$}}}
\newcommand{\dotin}{\mathrel{\dot\in}}
\begin{document}

% JEM: Pages are roman numbered from here, though page numbers are invisible until ToC. This is in
% keeping with most typesetting conventions.
\begin{romanpages}

	% JEM: By default, this template uses the traditional Oxford "Belt Crest". Un-comment the
	% following line to use the newer, "Blue Square" logo:
	% \renewcommand{\crest}{{\includegraphics[width=4.2cm, height=4.2cm]{figures/newlogo.pdf}}}

	% Title page is created here

	% NS - Remember that your title page doesn't count towards your word count
	\maketitle

	% %%%% ACKNOWLEDGEMENTS -- Nothing to do here except comment out if you don't want it.

	% NS - Similarly, your acknowledgements don't count for your word count
	\begin{acknowledgements}
		%TC:ignore
I am grateful to Bartek Klin for introducing me to computation with atoms and suggesting that the theory could be entirely embedded in the underlying set theory of permutation models. I would like to thank the members of Set Theory in the UK for their feedback and discussions about set theory, in particular: Richard Matthews for looking over early drafts of my proofs; and the questions about generalising some results to arbitrary supports, which inspired some discussion in Section \ref{sec:ext_prop}.
%TC:endignore
	\end{acknowledgements}

	% %%%% ABSTRACT -- Nothing to do here except comment out if you don't want it.
	\begin{abstract}
		Orbit-finite models of computation generalise the standard models of computation, to allow computation over infinite objects that are finite up to symmetries on atoms, denoted by $\mathbb{A}$. Set theory with atoms is used to reason about these objects. Recent work assumes that $\mathbb{A}$ is countable and that the symmetries are the automorphisms of a structure on $\mathbb{A}$. We study this set theory to understand generalisations of this approach. We show that: this construction is well-defined and sufficiently expressive; and that automorphism groups are adequate.
\newline Certain uncountable structures appear similar to countable structures, suggesting that the theory of orbit-finite constructions may apply to these uncountable structures. We prove results guaranteeing that the theory of symmetries of two structures are equal. Let: $PM(\mathcal{A})$ be the universe of symmetries induced by adding atoms in bijection with $\mathcal{A}$ and considering the symmetric universe; $\underline{\mathcal{A}}$ be the image of $\mathcal{A}$ on the atoms; and $\phi ^{PM(\mathcal{A})}$ be the relativisation of $\phi$ to $PM(\mathcal{A})$. \newline We prove that all symmetric universes of equality atoms have theory $Th(PM(\left\langle \mathbb{N}\right\rangle))$.
\newline We prove that for structures $\mathcal{A}$, `nicely' covered by a set of cardinality $\kappa$, there is a structure $\mathcal{B}\equiv\mathcal{A}$ of size $\kappa$ such that for all formulae $\phi(x)$ in one variable,
\begin{equation*}
    ZFC\vdash \phi(\underline{\mathcal{A}})^{PM(\mathcal{A})}\leftrightarrow\phi(\underline{\mathcal{B}})^{PM(\mathcal{B})}
\end{equation*}
As numerous statements about orbit-finite constructions are expressible in permutation models, this allows theorems about computation on countable atoms to hold for particular uncountable atoms.
In particular with $\mathcal{B}:=\left\langle\mathbb{Q},\leq\right\rangle$ and $\mathcal{A}:=\left\langle\mathbb{R},\leq\right\rangle$.
	\end{abstract}

	% This aligns the bottom of the text of each page. It generally makes things look better.
	\flushbottom

	% NS - Prints out an easy to navigate list of the todos you've left in your document
	\listoftodos

	% This is where the whole-document ToC appears: NS - Again, your table of contents isn't
	% included in your word count
	\tableofcontents

	% %%%% LIST OF ABBREVIATIONS

	% NS - Prints out a list of the acronyms that you've defined in glossary.tex. Just comment it
	% out if you don't want to list the acronyms. You can still use the inline declarations.
	\printnoidxglossary[type=\acronymtype]

	% The Roman pages, like the Roman Empire, must come to its inevitable close.
\end{romanpages}

% %%%% SECTIONS Add or remove any sections you'd like here, by file name (excluding '.tex'):
\flushbottom

\section{Introduction}
\label{sec:introduction}
\subsection{Background}
Set theory with atoms is an alternative to standard set theory, including elements that are not sets. Sets with atoms was introduced by Fraenkel \cite{fraenkel_begriff_1922} when attempting to prove that the negation of the axiom of choice is consistent.
\newline Fraenkel modified the standard $ZF$ axioms into the $ZFA$ axioms for set theory with atoms in a straightforward way. For example, extensionality axioms in both theories are:
\begin{itemize}
    \item $\mathbf{ZF}$: \textit{$x$ and $y$ have the same elements $\Rightarrow x=y$}
    \item $\mathbf{ZFA}$: \textit{$x$ and $y$ have the same elements and are both sets $\Rightarrow x=y$}
\end{itemize}
The $ZFA$ axioms are such that when restricted to the `pure universe', the class of all sets hereditarily containing no atoms, they become the standard $ZF$ axioms. We shall denote the set of atoms by $\mathbb{A}$ and assume that it is infinite.
\newline\newline Using the atoms, which he called urelements, Fraenkel restricted the universe to a class (the first Fraenkel model), which is still a model of $ZFA$ but where the axiom of choice fails.
\newline\newline More specifically, consider $Sym(\mathbb{A})$, the group of all bijections from $\mathbb{A}$ to $\mathbb{A}$, and define a canonical group action on the universe recursively. Furthermore, for a finite tuple of atoms $\mathbf{a}\in\mathbb{A}^{<\omega}$, say that $x$, an element of the universe, is supported by $\mathbf{a}$ if $\forall\pi\in Sym(\mathbb{A})$, if $\pi$ fixes $\mathbf{a}$ then it fixes $x$.
\newline We say $x$ is finitely supported if there is some finite tuple of atoms supporting it.
\newline\newline The first Fraenkel model is the class of all hereditarily finitely-supported elements. It is immediate that the axiom of choice fails in this model: no bijection between a pure set and the set of atoms is finitely supported (swapping any two atoms will fail to preserve the function), so the set of atoms will not be in bijection with an ordinal.
\newline\newline Fraenkel proved that if $ZFA$ is consistent then $ZFA+\neg AC$ is consistent, but failed to say anything about the consistency of $ZF+\neg AC$.
\newline\newline Mostowski, Lindenbaum, and Specker \cite{mostowski_uber_1939}\cite{specker_zur_1957} generalised Fraenkel's construction to define, for $\mathcal{M}$ a model of $ZFA$, $G\in\mathcal{M}$ a group acting on the atoms and $\mathcal{F}\in\mathcal{M}$ a normal filter of subgroups, the permutation model $PM(\mathcal{M},G,\mathcal{F})$.
\newline The construction is much that same as Fraenkel's construction, with $G$ playing the role of $Sym(\mathbb{A})$ and the filter providing a generalisation for being finitely supported. Specifically, we say that $x$ is supported by $\mathcal{F}$ if there is $H\in\mathcal{F}$ fixing $x$. The finite-support filter is the upward closure of $\left\{G_\mathbf{a}:\mathbf{a}\in\mathbb{A}^{<\omega}\right\}$ ($G_\mathbf{a}$ being the stabiliser of $\mathbf{a}$), which would be the filter for Fraenkel's original construction.
\newline\newline Sets with atoms have recently been rediscovered by computer scientists; originally by Gabbay \cite{gabbay_new_2002} as a way to embed $\alpha$-equivalence into the structure of the universe for a rigorous framework of working with objects up to $\alpha$-equivalence. Gabbay uses the atoms as the variable names (justifying the topic name of `Nominal Sets' \cite{pitts_nominal_2013}) and the first Fraenkel model. These are now called the `equality atoms', as the group is the automorphism group of the atoms in the structure of equality.\footnote{There is a topic called `Nominal Logic' \cite{pitts_nominal_2003}, which is unrelated to this thesis.}
\newline As before, the theory generalises, now to permutation models constructed by automorphism groups and finite-support filters \cite{bojanczyk_slightly_nodate}.
\newline An example, for countable atoms, is to biject the atoms with the rational numbers, $\mathbb{Q}$, to give them the structure $\left\langle \mathbb{Q},\leq\right\rangle$ and use its automorphism group to construct a finite-support permutation model. We say that $\left\langle \mathbb{Q},\leq\right\rangle$ generates this permutation model and $\left\langle \mathbb{A}\right\rangle$ generates the first Fraenkel model.
\newline When we assume that the set of atoms is countable and the structure on the atoms has nice properties (e.g. oligomorphic), we can define when a set is `orbit-finite' (finite up to symmetry) and define models of computation acting on, and defined by, orbit-finite sets e.g. automata \cite{bojanczyk_automata_2014} and Turing machines \cite{bojanczyk_turing_2013}. These orbit-finite models of computation are usually generalisations of the standard finite models of computation, and many standard results generalise. Importantly, when the structure has an effective theory, the behaviour of an orbit-finite deterministic Turing machine is computable \cite{bojanczyk_slightly_nodate}.
\newline\newline When defining these notions of computation, we refer to the automorphism group of the structure; this means that we express properties of computation as first-order statements in the external universe, not in the permutation model. Concerningly, since the group, $G$, will not usually exist inside the permutation model, $PM(\mathcal{M},G,\mathcal{F})$, we may not be able to express important properties in the first-order logic of the permutation model e.g. being orbit-finite.
\newline However, Blass and Brunner \cite{blass_injectivity_1979}\cite{brunner_fraenkel-mostowski_1990} showed that many properties refering to the group can be stated in the first-order logic of the permutation model. In particular, Blass showed that finite-support models can express the property of being orbit finite, suggesting that most interesting properties of orbit-finite computation are expressible by the first-order logic of the finite-support permutation models.

\newpage
\subsection{Motivation}
We care about the logic of permutation models generated by specific structures. If we identify permutation models as elementarily equivalent then we can transfer theorems between them.
\newline The obvious example would be `sets are orbit-finite iff they are definable', which is the main component in orbit-finite computation. The theorem assumes that the structure on the atoms,  is countable and oligomorphic, but if we can show that some finite-support permutation model generated by an uncountable and oligomorphic structure on atoms is elementarily equivalent to a finite-support model generated by some countable structure, then the theorem still applies. In fact, this theorem justifies that this identification might be possible; its proof can be adapted to work for uncountable structures given that some countable structure `nicely covers' (Theorem \ref{thm:transfer_down_sub}) it. For example, $\left\langle \mathbb{Q},\leq\right\rangle$ satisfies the standard conditions for the theorem and `nicely covers' $\left\langle \mathbb{R},\leq\right\rangle$; there is no obvious way to distinguish the theories of the permutation models generated by $\left\langle \mathbb{Q},\leq\right\rangle$ and $\left\langle \mathbb{R},\leq\right\rangle$ so are they the same? If two permutation models are elementarily equivalent then the true statements about orbit-finite constructions on the models' generating structures, that are expressible in the permutation models, are the same.
\newline\newline These give us our main question: \textbf{When can we show that finite-support permutation models generated by different structures on the atoms are isomorphic or elementarily equivalent?}
\newline\newline Additionally, there are some questions regarding expressibility and well-defined-ness. Indeed, we have been referring to \textbf{the} permutation model induced by some structure given a pure universe. Any model of $ZF$ can be extended by atoms of the correct cardinality and a corresponding permutation model can be taken, so one such permutation model can always be constructed. However, we want to know if it's unique.
\newline\newline When we talk about orbit-finite constructions, we refer to the generating structure and its automorphism group. The group is usually excluded from the permutation model and if $\mathcal{N}$ is a finite-support permutation model of both $\mathcal{M}$ and $\mathcal{M}'$ the structures, $\mathcal{A}\in\mathcal{M}$ and $\mathcal{A}'\in\mathcal{M}'$, that generate $\mathcal{N}$ may differ (although both $\mathcal{A},\mathcal{A}'\in\mathcal{N}$). We want to know what we can express in the first-order logic of the permutation models.

\newpage
\subsection{Contributions}
In this thesis we study the logic of permutation models and justify its expressiveness regarding orbit-finite constructions.
We will summarise contributions at the beginning of each section, and briefly state our main results here.
\begin{itemize}
    \item We generalise the notion of orbit-finite computation:
    \begin{itemize}
        \item We consider uncountable structures on atoms.
        \item We show that every finite-support permutation model is induced by an automorphism group (Corollary \ref{thm:adequacy_automorphism}).
    \end{itemize}
    \item We characterise equality of permutation models on the same supports but differing pure structures (Theorem \ref{thm:different_structure_model}).
    \item We strengthen a result of Brunner \cite{brunner_fraenkel-mostowski_1990} that being a finite-support permutation model is expressible, by replacing the assumption of global choice by a weak choice axiom, $SVC$ (Corollary \ref{thm:finite_support_fo_improved}).
    \item We justify that numerous interesting statements and definitions about orbit-finite constructions are expressible in corresponding finite-support permutation models (Section \ref{sec:computation}).
    \item We prove a collection of theorems in Section \ref{sec:cardinal} that allow us to conclude when a finite-support permutation model is a finite-support permutation model in a universe where the atoms have a different cardinality.
    \begin{itemize}
        \item We provide a method of determining a structure inducing the permutation model in the new universe (Theorem \ref{thm:full_transfer}).
    \end{itemize}
    \item We identify all `first Fraenkel models' on the same pure universe (Theorem \ref{thm:first_fraenkel_unique}).
    \item We identify the theories of the finite-support permutation models induced by $\left\langle \mathbb{R},\leq\right\rangle$ and $\left\langle \mathbb{Q},\leq\right\rangle$ (allowing reference to the inducing structures) (Example \ref{examples:R}).
    \begin{itemize}
        \item We conclude that numerous theorems about orbit-finite structures on $\left\langle \mathbb{Q},\leq\right\rangle$ hold for $\left\langle \mathbb{R},\leq\right\rangle$ and vice-versa.
    \end{itemize}
\end{itemize}
\section{Orbit-Finite Sets}
\label{sec:orbit-finite-sets}

\subsection{Summary}
In this section we:
\begin{itemize}
    \item Introduce set theory with atoms.
    \begin{itemize}
        \item Construct a model of $ZFA$ in a model of $ZF$ and vice-versa.
    \end{itemize}
    \item Introduce orbit-finite sets.
    \item Recall standard theorems about definability.
\end{itemize}

\subsection{Sets with Atoms}
\label{sec:sets_with_atoms}

Standard set theory formalises mathematics by allowing us to talk about a `universe' and its elements, the sets. In its view, sets can only contain sets, as they are all that exist.
\newline The typical choice is to use the $ZF$ axioms and the first order language $\left\{\in\right\}$, containing a single binary predicate symbol.
\newline\newline This can seem somewhat unnatural, as we might want to consider indivisible object, \textbf{atoms}. Sets with atoms, in the language $\mathcal{L}=\left\{\in,\mathbb{A}\right\}$ with a single binary predicate and a single unary predicate, resolve this issue.
\newline\newline As with standard set theory, we work using a set of axioms, \textbf{Zermelo-Fraenkel set theory with atoms (ZFA)}.\cite{jech_axiom_1973} 
\begin{definition}[ZFA] The following are the axioms of $ZFA$:  
\newline\indent\textbf{Extensionality} For all sets $x$ and $y$ i.e. $\neg\mathbb{A}(x)$ and $\neg\mathbb{A}(y)$, if for all $z$ ($z\in x$ iff $z\in y$) then $x=y$.
\newline\indent\textbf{Empty Set} There is a set containing no elements, denoted $\emptyset$.
\newline\indent\textbf{Pairing} For all elements $x$ and $y$, $\left\{x,y\right\}$ is a set.
\newline\indent\textbf{Union} For all sets $x$, $\bigcup x$ is a set.
\newline\indent\textbf{Power Set} For all sets $x$, there is a set $\mathcal{P}(x)$ containing precisely its subsets.
\newline\indent\textbf{Infinity} There is a set $x$, s.t. for $y\in x$, $y\cup \left\{y\right\}\in x$.
\newline\indent\textbf{Foundation} For all non-empty sets $x$, there is $y\in x$ with $y\cap x=\emptyset$
\newline\indent\textbf{Comprehension} For all $\mathcal{L}$-formulae $\phi(x_0,\dots,x_{n-1},y)$, for all elements $a_0,\dots,a_n$, $\left\{y\in a_n:\phi(a_0,\dots,a_{n-1},y)\right\}$ is a set.
\newline\indent\textbf{Replacement} For all $\mathcal{L}$-formulae $\phi(x_0,\dots,x_{n-1},x,y)$ and elements $a_0,\dots,a_n$, if for $x\in a_n$ there is unique $y$ s.t. $\phi (a_0,\dots,a_n,y)$ then $\left\{y:\exists x\in a_n.\phi(a_0,\dots,a_{n-1},x,y)\right\}$ is a set.
\newline\indent\textbf{Empty Atoms} For all atoms $a$, $a$ contains no elements.
\newline\indent\textbf{Set of Atoms} There is a set containing precisely the atoms, denoted $\mathbb{A}$.

\end{definition}

Sometimes we want to refer to collections of elements which may not be sets. These are called classes.

\begin{definition}[Class]
    Let $\phi(x_0,\dots,x_{n-1},y)$ be an $\mathcal{L}$-formula and $a_0,\dots,a_{n-1}$ be elements.
    \newline The collection $U=\left\{y:\phi(a_0,\dots a_{n-1},y)\right\}$ is a class. We use the standard abuse of notation of $x\in U$ to mean $\phi(a_0,\dots,a_{n-1},x)$.
\end{definition}

\begin{example}
    Denote by $\mathcal{V}(\mathbb{A})$ the class of all elements.
    \begin{equation*}
        \mathcal{V}(\mathbb{A}):=\left\{x:x=x\right\}
    \end{equation*}
    For an $\mathcal{L}$-formula $\phi(x_0,\dots,x_{n-1})$ and $a_0,\dots,a_{n-1}\in\mathcal{V}(\mathbb{A})$, $\phi(a_0,\dots,a_{n-1})$ holds in the universe iff $\left\langle\mathcal{V}(\mathbb{A}),\in,\mathbb{A}\right\rangle\models\phi(a_0,\dots,a_{n-1})$, where `$\models$' takes the standard meaning. We will sometimes just write $\mathcal{V}(\mathbb{A})\models\phi(a_0,\dots,a_{n-1})$ when the relations are clear.
\end{example}
\begin{example}
    \label{examples:ordinals}
    Denote by $\mathbf{On}$ the class of all ordinal numbers.
    \begin{equation*}
        \mathbf{On}:=\left\{\alpha:\alpha\text{ is well-ordered by}\in\text{and for all }y\in x\in \alpha, y\in \alpha\right\}
    \end{equation*}
\end{example}

We will want to talk about formulae holding in a class, so we can localise formulae.

\begin{definition}[Localisation]
    Let $U$ be a class and $\phi$ be an $\mathcal{L}$-formula.
    \newline The localisation of $\phi$ to $U$, $\phi^U$, is attained by replacing all occurances of $\forall x$ and $\exists x$ by $\forall x\in U$ ($\forall x\left(x\in U\rightarrow\dots\right)$) and $\exists x\in U$ ($\exists x\left(x\in U\wedge\dots\right)$).
\end{definition}

\begin{theorem}[\cite{jech_axiom_1973}]
    Let $U$ be a class, $\phi(x_0,\dots,x_{n-1})$ be a formula, and $a
    _0,\dots,a_{n-1}\in U$.
    \begin{equation}
        {\phi(a_0,\dots,a_{n-1})}^U\iff U\models\phi(a_0,\dots,a_{n-1})
    \end{equation}
\end{theorem}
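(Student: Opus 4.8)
The plan is to prove the equivalence by induction on the structure of the $\mathcal{L}$-formula $\phi$, exploiting the fact that both the localisation operation $(\cdot)^U$ and the Tarskian satisfaction relation $\langle U,\in,\mathbb{A}\rangle\models\cdot$ are defined by recursion on formula complexity and are built from exactly the same clauses for the connectives and quantifiers. Since $U$ may be a proper class, I would read the statement as a schema: for each fixed $\phi$ the claim is a single first-order assertion with free variables among the $a_i$, and the induction is carried out in the metatheory over the finitely many subformulae of $\phi$. This sidesteps the need for a set-sized satisfaction predicate on all of $\mathcal{V}(\mathbb{A})$, which Tarski's theorem forbids.

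For the base case I would take $\phi$ atomic, i.e.\ one of $x_i\in x_j$, $\mathbb{A}(x_i)$, or $x_i=x_j$. An atomic formula contains no quantifiers, so $\phi^U$ is syntactically identical to $\phi$, and the interpretations of $\in$, $\mathbb{A}$ and $=$ in the structure $\langle U,\in,\mathbb{A}\rangle$ are by definition the restrictions to $U$ of the corresponding relations on $\mathcal{V}(\mathbb{A})$. Hence for parameters $a_0,\dots,a_{n-1}\in U$ both sides assert the same unrelativised atomic fact, and the equivalence holds trivially.

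For the connectives, I would use that localisation commutes with $\neg,\wedge,\vee,\rightarrow$, e.g.\ $(\neg\psi)^U\equiv\neg(\psi^U)$ and $(\psi\wedge\chi)^U\equiv\psi^U\wedge\chi^U$, and that satisfaction in $\langle U,\in,\mathbb{A}\rangle$ has the matching clauses $U\models\neg\psi\iff U\not\models\psi$, and so on. Applying the induction hypothesis to the immediate subformulae $\psi,\chi$ then yields the claim by pure propositional reasoning.

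The quantifier step is where the real content lies, and I expect it to be the main obstacle to state precisely. By definition of localisation, $(\exists x\,\psi)^U$ is $\exists x\,(x\in U\wedge\psi^U)$, whereas $U\models\exists x\,\psi$ means that there exists $b\in U$ with $U\models\psi(b,\bar a)$. The crux is to observe that the bounded quantifier $\exists x\,(x\in U\wedge\dots)$ ranges over exactly the elements of the domain of $\langle U,\in,\mathbb{A}\rangle$, namely those $b$ with $b\in U$. Choosing such a witness $b\in U$ and applying the induction hypothesis to $\psi$ (whose parameters $b,a_0,\dots,a_{n-1}$ all lie in $U$) converts $\psi(b,\bar a)^U$ into $U\models\psi(b,\bar a)$, and conversely; the universal case $(\forall x\,\psi)^U\equiv\forall x\,(x\in U\rightarrow\psi^U)$ is handled dually. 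The one point requiring care is that the induction hypothesis applies only because every parameter, including the freshly quantified witness, is guaranteed to be a member of $U$ — which is precisely what the bound $x\in U$ enforces — so the relativised and internal quantifications have the same range and the equivalence is preserved through the quantifier.
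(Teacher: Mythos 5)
Your proof is correct and matches the intended argument: the paper gives no proof of this statement, citing Jech, and the standard proof in that reference is precisely your induction on formula complexity, with atomic formulae absolute, connectives commuting with localisation, and the bounded quantifier in $\phi^U$ ranging over exactly the domain of $\left\langle U,\in,\mathbb{A}\right\rangle$. Your observation that for a proper class $U$ the result must be read as a theorem schema, with the induction carried out in the metatheory over the subformulae of a fixed $\phi$, is exactly the right way to make the statement precise.
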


\begin{definition}[Absoluteness \cite{jech_set_2013}]
\label{def:absoluteness}
    Let $\phi(x_0,\dots,a_{n-1})$ be a formula and $\mathcal{N}\subseteq\mathcal{M}$ be classes. We define $\phi$ to be absolute between $\mathcal{N}$ and $\mathcal{M}$ if, for $a_0,\dots,a_{n-1}\in\mathcal{N}$, $\left\langle\mathcal{N},\in\right\rangle\models\phi(a_0,\dots,a_{n-1})$ iff $\left\langle\mathcal{M},\in\right\rangle\models\phi(a_0,\dots,a_{n-1})$.
    \newline We can define the L\'{e}vy hierarchy ($\Sigma_n$, $\Pi_n$, and $\Delta_n$ for $n\in \mathbb{N}$) on logical formulae based upon their quantifier complexity. $\Delta_1$ formulae are absolute between transitive classes.
    \newline For a collection of formulae, $\Gamma$, and a collection of axioms, $S$, we define $\Gamma^S$ to be the formulae equivalent to a formula in $\Gamma$ under the assumptions $S$.
\end{definition}

Although the universe contains atoms, we can still consider the elements that contain no atoms, contain no sets containing atoms, and so on.

\begin{definition}[Transitive Closure]
    For $x\in\mathcal{V}(\mathbb{A})$, 
    \begin{equation*}
        TC(x):=x\cup\bigcup x\cup\bigcup\bigcup x\cup \dots
    \end{equation*}
    In other words, $TC(x)$ is the smallest set s.t. $x\subseteq TC(x)$ and $y\in TC(x)\rightarrow y\subseteq TC(x)$.
\end{definition}

\begin{definition}[Pure Sets]
    \label{def:pure_sets}
    We say that $x$ is pure iff $\forall y\in\left\{x\right\}\cup TC(x)\space \neg\mathbb{A}(y)$
    \newline Denote by $\mathcal{V}$ the class of all pure sets.
    \newline For a formula, $\phi$, we may write $\phi^{pure}$ for $\phi^\mathcal{V}$.
\end{definition}

If $\mathcal{V}(\mathbb{A})\models ZFA$, then $\mathcal{V}\models ZF$. So if $ZFA$ is consistent, then $ZF$ is consistent.

\begin{theorem}[Cumulative Hierarchy \cite{jech_axiom_1973}]
    For a set $x$, we define $\mathcal{V}(x)_\alpha$ for $\alpha$ an ordinal by recursion.
    \begin{align*}
        \mathcal{V}(x)_0&:=x\\
        \mathcal{V}(x)_{\alpha+1}&:=\mathcal{P}(\mathcal{V}(x)_\alpha)\cup \mathcal{V}(x)_\alpha\\
        \mathcal{V}(x)_\lambda&:=\bigcup_{\alpha<\lambda}\mathcal{V}(x)_\alpha&\text{for }\lambda\text{ a limit ordinal}\\
        \mathcal{V}(x)&:=\bigcup_{\alpha}\mathcal{V}(x)_\alpha
    \end{align*}
    Then $\mathcal{V}(\mathbb{A})$ is the universe and $\mathcal{V}(\emptyset)=\mathcal{V}$.
\end{theorem}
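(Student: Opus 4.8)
The plan is to prove the two identities by transfinite and $\in$-induction, using Foundation and Replacement. The only preliminary fact I need is \emph{monotonicity}: that $\alpha \le \beta$ implies $\mathcal{V}(x)_\alpha \subseteq \mathcal{V}(x)_\beta$. This follows by a routine transfinite induction on $\beta$ — the successor clause explicitly unions in $\mathcal{V}(x)_\alpha$, and the limit clause is a union of earlier stages — so each stage contains all its predecessors. I would record this as a lemma, since both parts of the theorem rely on ``pushing'' set-many elements up to a common stage.

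For the first identity, the inclusion $\mathcal{V}(\mathbb{A}) \subseteq \{y : y = y\}$ is immediate, so I would concentrate on showing that every element $y$ of the universe lies in some stage $\mathcal{V}(\mathbb{A})_\alpha$. I would argue by $\in$-induction, which is justified by Foundation. If $y$ is an atom, then $y \in \mathbb{A} = \mathcal{V}(\mathbb{A})_0$ and we are done. If $y$ is a set, the induction hypothesis gives, for each $z \in y$, an ordinal $\alpha_z$ with $z \in \mathcal{V}(\mathbb{A})_{\alpha_z}$; by Replacement $\{\alpha_z : z \in y\}$ is a set of ordinals, so it has a supremum $\beta$, and monotonicity gives $y \subseteq \mathcal{V}(\mathbb{A})_\beta$, whence $y \in \mathcal{P}(\mathcal{V}(\mathbb{A})_\beta) \subseteq \mathcal{V}(\mathbb{A})_{\beta+1}$.

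For the second identity I would prove two inclusions. For $\mathcal{V}(\emptyset) \subseteq \mathcal{V}$, I would show by transfinite induction on $\alpha$ that every member of $\mathcal{V}(\emptyset)_\alpha$ is pure: the base stage is empty; at a successor, any new element is a subset of $\mathcal{V}(\emptyset)_\alpha$, hence a set all of whose members are pure, and since $TC(x) = x \cup \bigcup_{z \in x} TC(z)$ then contains no atoms, $x$ is pure; limits are unions. For $\mathcal{V} \subseteq \mathcal{V}(\emptyset)$, I would run the same $\in$-induction as in the first part, now observing that if $x$ is pure then each $z \in x$ is pure (because $z \in TC(x)$ is not an atom and $TC(z) \subseteq TC(x)$ avoids atoms), so the supremum-of-ranks argument places $x$ in $\mathcal{V}(\emptyset)_{\beta+1}$.

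The main obstacle is not any single inclusion — each is a short induction — but the bookkeeping ensuring the recursion is legitimate and the inductions are correctly grounded. Concretely, I must first invoke the transfinite recursion theorem (which itself uses Replacement) to know $\mathcal{V}(x)_\alpha$ is well defined as a class function of $\alpha$, and I must check that Foundation in $ZFA$ applies to the mixed universe of atoms and sets, so that $\in$-induction is valid with the atoms serving as an additional base case alongside the empty set. The subtle point specific to atoms is verifying that purity is exactly closure under ``no atom anywhere in the transitive closure,'' so that the successor step of the $\mathcal{V}(\emptyset)$ computation genuinely produces pure sets and never accidentally readmits an atom.
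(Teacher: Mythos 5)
Your proof is correct. The paper itself offers no proof of this theorem — it is stated as a cited standard result from Jech — and your argument (monotonicity of the stages, then $\in$-induction grounded by Foundation with atoms as a base case, using Replacement to take a supremum of least ranks) is exactly the standard argument the citation refers to, including the correct handling of purity via transitive closures.
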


Not only can we construct a model of $ZF$ inside any model of $ZFA$, but we can also construct a model of $ZFA$ inside any model of $ZF$ with the pure universe being the model of $ZF$ and the atoms being in bijection with a given pure set. We sketch the construction below; see \cite{blass_freyds_1989} for the full detail.

\begin{definition}
    \label{def:class_adding_atoms}
    Let $\mathcal{V}\models ZF$, and $x\in\mathcal{V}$.
    Define the class $\mathcal{V}^*(x)$ by recursion:
    \begin{align*}
        \mathcal{V}^*(x)_0&:=x\times\left\{1\right\}\\
        \mathcal{V}^*(x)_{\alpha+1}&:=\mathcal{P}(\mathcal{V}^*(x)_\alpha)\times\left\{0\right\}\cup\mathcal{V}^*(x)_\alpha\\
        \mathcal{V}^*(x)_\lambda&:=\bigcup_{\alpha<\lambda}\mathcal{V}^*(x)_\alpha&\text{for }\lambda\text{ a limit ordinal}\\
        \mathcal{V}^*(x)&:=\bigcup_\alpha\mathcal{V}^*(x)_\alpha
    \end{align*}
    Define a binary predicate, $\dotin$, and a unary predicate, $\mathbb{A}'$, on $\mathcal{V}^*(x)$ by:
    \begin{align*}
        w \dotin y&\iff y=\left\langle z,0\right\rangle\wedge w\in z\\
        \mathbb{A}'(w)&\iff w=\left\langle z,1\right\rangle
    \end{align*}
    Define a class function $F:\mathcal{V}\rightarrow\mathcal{V}^*(x)$ by recursion:
    \begin{align*}
        F(y)&:=\left\langle\left\{F(z):z\in y\right\},0\right\rangle
    \end{align*}
\end{definition}
\begin{theorem}
\label{thm:zfa_in_zf}
    \begin{enumerate}
        \item \label{thm:zfa_in_zf_1}$\left\langle \mathcal{V}^*(x),\dotin,\mathbb{A}'\right\rangle\models ZFA$
        \item\label{thm:zfa_in_zf_2} $F$ is an isomorphism between $\mathcal{V}$ and the pure universe of $\mathcal{V}^*(x)$
        \item\label{thm:zfa_in_zf_3} $\left\langle \mathcal{V}^*(x),\dotin,\mathbb{A}'\right\rangle\models \left|\mathbb{A}\right|=\left|F(x)\right|$
        \item \label{thm:zfa_in_zf_4}$\mathcal{V}\models AC\iff\left\langle \mathcal{V}^*(x),\dotin,\mathbb{A}'\right\rangle\models AC$
    \end{enumerate}
\end{theorem}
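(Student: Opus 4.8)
The plan is to treat the whole construction as an interpretation of $ZFA$ inside $\mathcal{V}$: every element of $\mathcal{V}^*(x)$ is a pure set of $\mathcal{V}$, the tag $1$ marks atoms and the tag $0$ marks sets, and $\dotin$ reads off the genuine members of a tag-$0$ element. The first fact I would record is that $\mathcal{V}^*(x)$ is $\dotin$-transitive, in the sense that every $\dotin$-element of an element of $\mathcal{V}^*(x)$ again lies in $\mathcal{V}^*(x)$ (immediate from the hierarchy, since the underlying set $z$ of $\langle z,0\rangle$ sits inside some $\mathcal{V}^*(x)_\alpha$), and that a tag-$1$ element has no $\dotin$-members at all, because $\dotin$ only fires on tag-$0$ pairs and $0\neq 1$ as von Neumann ordinals. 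This simultaneously yields \textbf{Empty Atoms} and keeps each atom $\langle z,1\rangle$ distinct from the empty set $\langle\emptyset,0\rangle$, which is the one genuinely delicate bookkeeping point of the argument.

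For part \ref{thm:zfa_in_zf_1} I would then verify the axioms one at a time. \textbf{Extensionality} holds because two tag-$0$ elements with the same $\dotin$-extension have equal underlying sets; \textbf{Set of Atoms} is witnessed by $\langle x\times\{1\},0\rangle$; and the remaining axioms (\textbf{Empty Set}, \textbf{Pairing}, \textbf{Union}, \textbf{Power Set}, \textbf{Infinity}, \textbf{Foundation}, \textbf{Comprehension}, \textbf{Replacement}) each follow by transferring the corresponding $ZF$ axiom of $\mathcal{V}$ through the tagging. The key structural point is that the successor clause $\mathcal{P}(\mathcal{V}^*(x)_\alpha)\times\{0\}$ guarantees that every collection of already-constructed elements that $\mathcal{V}$ can form is itself captured as a tag-$0$ element, so in particular all $\dotin$-subsets of a given $\langle z,0\rangle$ appear and are collected at a later stage. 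This part is the longest but entirely mechanical.

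Part \ref{thm:zfa_in_zf_2} proceeds by $\in$-recursion and $\in$-induction. $F$ is well-defined by Foundation in $\mathcal{V}$, and a single induction shows simultaneously that $F$ is injective and that $z\in y\iff F(z)\dotin F(y)$ (unfold the definition of $\dotin$ and use injectivity at lower rank). To see that $F$ is onto the pure universe I would characterise the pure elements of $\mathcal{V}^*(x)$ as the hereditarily tag-$0$ elements and show by $\dotin$-induction that each such element equals $F(y)$ for a unique $y$. Part \ref{thm:zfa_in_zf_3} then needs only an explicit witness: the map $\langle z,1\rangle\mapsto F(z)$ on $x\times\{1\}$ has a graph that is a set of $\mathcal{V}$ by Replacement, hence is coded by some $g\in\mathcal{V}^*(x)$, and one checks internally that $g$ is a bijection $\mathbb{A}\to F(x)$ (injectivity from injectivity of $F$, surjectivity since each member of $F(x)$ is $F(z)$ with $z\in x$).

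For part \ref{thm:zfa_in_zf_4} I would use the formulation ``$AC\equiv$ every set is well-orderable''. Left to right: under $AC$ in $\mathcal{V}$ the atom set $x\times\{1\}$ is well-orderable, and one well-orders $\mathcal{V}^*(x)$ level by level, from which choice functions internal to $\mathcal{V}^*(x)$ are definable via its $ZFA$ axioms from part \ref{thm:zfa_in_zf_1}. The reverse direction is the one genuinely clever step: a well-ordering of a pure set $F(y)$ is built from Kuratowski pairs of its pure members and is therefore \emph{itself} pure, so it has the form $F(r)$; hence if $\mathcal{V}^*(x)\models AC$ then every $F(y)$ carries a pure well-ordering $F(r)$, and by part \ref{thm:zfa_in_zf_2} the preimage $r$ well-orders $y$ in $\mathcal{V}$, giving $\mathcal{V}\models AC$. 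The main obstacle is thus not any single hard idea but the length of the axiom-by-axiom verification in part \ref{thm:zfa_in_zf_1}, together with the care needed throughout to keep the tag-$1$ atoms separated from the tag-$0$ empty set.
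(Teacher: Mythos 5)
Your proposal is correct and follows essentially the same route as the paper's sketch: axiom-by-axiom transfer of the $ZF$ axioms through the tagging, identification of the pure universe with the hereditarily tag-$0$ elements, and explicit internally coded witnesses (Kuratowski pairs re-tagged with $0$) both for the bijection $\mathbb{A}\to F(x)$ in part (3) and for moving choice objects from $\mathcal{V}$ into $\mathcal{V}^*(x)$ in part (4). The only cosmetic differences are that you establish the isomorphism in part (2) by a direct $\in$/$\dotin$-induction where the paper cites Mostowski's collapsing lemma (whose proof is exactly that induction), and that you spell out the right-to-left direction of part (4) via purity of well-orderings of pure sets, a step the paper's sketch leaves implicit.
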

See \cite{blass_freyds_1989} and \cite{jech_axiom_1973} for the full details. We sketch the proof here.
\begin{proof}
\begin{itemize}
    \item [\eqref{thm:zfa_in_zf_1}] Each axiom of ZFA in $\dotin, \mathbb{A}'$ can be translated into a statement, which is either (almost) an instance of an axiom of ZF or states that atoms are empty, which is immediate from $\dotin$. We give an example.
    \newline\textbf{Comprehension} For all $\mathcal{L}$-formulae $\phi(x_0,\dots,x_{n-1},y)$, for all elements $a_0,\dots,a_n$, $\left\{y\in a_n:\phi(a_0,\dots,a_{n-1},y)\right\}$ is a set.
    \newline Let $\phi$ be as above and $a_0,\dots,a_n\in \mathcal{V}^*(x)$. Let $\phi'$ be $\phi$ with $\dotin$ for $\in$ and $\mathbb{A}'$ for $\mathbb{A}$.
    \newline By comprehension,
    \begin{equation*}
        X=\left\{y\in \pi(a_n):\phi'(a_0,\dots,a_{n-1},y)\right\}\subseteq\mathcal{V}^*(x)
    \end{equation*}
    is a set (where $\pi$ is the projection onto the first coordinate).
    So $\left\langle X,0\right\rangle\in\mathcal{V}^*(x)$ is the desired element.
    \item [\eqref{thm:zfa_in_zf_2}] The pure universe of $\mathcal{V}^*(x)$ is $\mathcal{V}^*(\emptyset)$. Mostowski's collapsing lemma \cite{jech_axiom_1973} gives an isomorphism between $\mathcal{V}^*(\emptyset)$ and its image, and the construction gives the isomorphism as being inverse to $F$, so $F$ is an isomorphism.
    \item [\eqref{thm:zfa_in_zf_3}] Define $f:x\times\left\{1\right\}\rightarrow F(x)\times \left\{0\right\}$ by $f(\left\langle a,1\right\rangle)=\left\langle F(a),0\right\rangle$ and define
    \begin{equation*}
        f'=\left\{\left\{\left\{l\right\},\left\{l,r\right\}\right\}\times\left\{0\right\}:\left\langle l,r\right\rangle\in f\right\}\times\left\{0\right\}
    \end{equation*}
    We have that $f'\in\mathcal{V}^*(x)$ and $\mathcal{V}^*(x)\models \textit{`$f'$ is a bijection from $\mathbb{A}$ to $F(x)$'}$.
    \item [\ref{thm:zfa_in_zf_4}] Given a set in $\mathcal{V}^*(x)$, we can take a choice function in $\mathcal{V}$ and use a similar trick as above to move the choice function into $\mathcal{V}^*(x).$
\end{itemize}
\end{proof}
So $ZF$ and $ZFA$ are equiconsistent. Notice that given a model of $ZFA$, we can perform this construction within the pure universe.
\begin{theorem}
    \label{thm:atoms_iso_pure_class}
    Let $\mathcal{V}(\mathbb{A})\models ZFA$ and suppose $x\in\mathcal{V}(\mathbb{A})$ with $\left|x\right|=\left|\mathbb{A}\right|$, then
    \begin{equation*}
        \mathcal{V}(\mathbb{A})\cong \mathcal{V}^*(x)
    \end{equation*}
\end{theorem}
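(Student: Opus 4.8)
The plan is to exhibit an explicit isomorphism $\Phi \colon \mathcal{V}^*(x) \to \mathcal{V}(\mathbb{A})$, treating the construction of Definition \ref{def:class_adding_atoms} as carried out inside $\mathcal{V}(\mathbb{A})$ over the set $x$ (the recursion uses only pairing, union and power set, so it is available in $ZFA$, and whether or not $x$ is pure is irrelevant). Since $\mathcal{V}(\mathbb{A}) \models \left|x\right| = \left|\mathbb{A}\right|$, first fix a bijection $g \colon x \to \mathbb{A}$ in $\mathcal{V}(\mathbb{A})$. I would then define $\Phi$ by recursion on the stage $\alpha$ at which an element of $\mathcal{V}^*(x)$ first appears: on a constructed atom put $\Phi(\left\langle a, 1\right\rangle) := g(a)$, and on a constructed set put $\Phi(\left\langle z, 0\right\rangle) := \left\{\Phi(w) : w \dotin \left\langle z, 0\right\rangle\right\}$, noting $w \dotin \left\langle z, 0\right\rangle \iff w \in z$. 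The recursion is legitimate because every element of $\mathcal{V}^*(x)$ lies in some $\mathcal{V}^*(x)_\alpha$ and the $\dotin$-predecessors of $\left\langle z, 0\right\rangle$ appear at strictly earlier stages.

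Atom-preservation is immediate from the two clauses: $\mathbb{A}'(w)$ gives $\Phi(w) = g(a) \in \mathbb{A}$, while a constructed set is sent to a genuine set, so $\mathbb{A}'(w) \iff \mathbb{A}(\Phi(w))$; moreover $\Phi$ maps the atoms of $\mathcal{V}^*(x)$ onto $\mathbb{A}$ because $g$ is surjective. For the relational clause I would verify $w \dotin y \iff \Phi(w) \in \Phi(y)$ by cases on $y$: when $y$ is an atom both sides are vacuously false, and when $y = \left\langle z,0\right\rangle$ the definition gives $\Phi(w) \in \Phi(y) \iff \Phi(w) = \Phi(u)$ for some $u \dotin y$, which matches $w \dotin y$ provided $\Phi$ is injective. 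Thus the homomorphism property reduces to injectivity.

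Injectivity and surjectivity are the two inductions carrying the real content. For injectivity I would induct on stages: distinct atoms go to distinct atoms since $g$ is injective; an atom and a set are separated because in $\mathcal{V}(\mathbb{A})$ no atom is a set; and for two constructed sets $\left\langle z,0\right\rangle, \left\langle z',0\right\rangle$, equality of images together with the induction hypothesis on the lower-stage elements of $z$ and $z'$ forces $z = z'$ by extensionality. For surjectivity I would use $\in$-induction (Foundation) in $\mathcal{V}(\mathbb{A})$: every atom is $g(a)$ hence in the range, and for a set $s$ each $t \in s$ has by hypothesis a preimage, which by injectivity is unique, so Replacement yields the set $z := \left\{w \in \mathcal{V}^*(x) : \Phi(w) \in s\right\}$ (appearing at some stage), whence $\Phi(\left\langle z,0\right\rangle) = s$. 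The main obstacle is managing the entanglement of these steps: the relational clause depends on injectivity, and the surjectivity argument needs injectivity to form $z$ by Replacement without appealing to choice. The cleanest packaging is to establish injectivity first by stage-induction and only then deduce the relational clause and surjectivity, which is morally the uniqueness of a Mostowski-style collapse adapted to a structure with designated atoms matched by $g$.
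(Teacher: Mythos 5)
Your proposal is correct and follows essentially the same route as the paper: the same recursively defined collapse map sending $\left\langle a,1\right\rangle\mapsto g(a)$ and $\left\langle z,0\right\rangle\mapsto\left\{\Phi(w):w\in z\right\}$, with surjectivity proved by $\in$-induction in $\mathcal{V}(\mathbb{A})$ using Replacement to form the preimage set. The only difference is one of packaging: where you verify injectivity, atom-preservation, and the $\dotin$-preservation clause by hand via stage induction, the paper obtains exactly this part in one step by citing Mostowski's collapsing lemma for $ZFA$, so your argument amounts to inlining that cited lemma.
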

\begin{proof}
    Let $f:x\rightarrow\mathbb{A}$ be a bijection.
    Define $\pi:\mathcal{V}^*(x)\rightarrow\mathcal{V}(\mathbb{A})$ by:
    \begin{align*}
        \pi(\left\langle a,1\right\rangle)&:=f(a)\\
        \pi(\left\langle y,0\right\rangle)&:=\left\{\pi(z):z\in y\right\}
    \end{align*}
    By Mostowski's collapsing lemma for $ZFA$ \cite{blass_freyds_1989}, $\pi$ is an isomorphism onto its image.
    We show that $\pi$ is surjective by well-founded-ness.
    \newline For $a\in\mathbb{A}$, $a=\pi(\left\langle f^{-1}(a),1\right\rangle)$ so $\mathbb{A}\subseteq \pi[\mathcal{V}^*(x)]$.
    \newline Suppose that $z\subseteq\pi[\mathcal{V}^*(x)]$ so $\left\{\pi^{-1}\left(y\right):y\in z\right\}\in \mathcal{V}$ is a well-defined set. Let $Z=\left\langle\left\{\pi^{-1}(y):y\in z\right\},0\right\rangle$. Note that $Z\in \mathcal{V}^*(x)$ and $\pi(Z)=z$. We now deduce that $\mathcal{V}(\mathbb{A})\subseteq\pi[\mathcal{V}^*(x)]$ by well-founded-ness so $\pi$ is surjective.
\end{proof}
It might have been possible that there are distinct models of $ZFA$ with the same pure universe with the atoms having the same cardinality. Theorem \ref{thm:atoms_iso_pure_class} shows that, fixing a pure universe and cardinality on the atoms, all models of $ZFA$ are isomorphic to the same class in the pure universe. So extending a model of $ZF$ to a model of $ZFA$ by adding atoms in bijection with a pure set is well-defined and unambiguous. Also, $\pi$ allows us to transform any sentence on the entire universe into a sentence on the pure universe. For every $\mathcal{L}-$sentence $\phi$, there is a $\left\{\in\right\}$-formula $\phi'(x)$ (attained by replacing $\in$ by $\dotin$, $\mathbb{A}$ by $\mathbb{A}'$, and bounding all quantifiers by $\mathcal{V}^*(x)$) s.t.
\begin{equation*}
    ZFA\models\forall x\in\mathcal{V}\left(\left|\mathbb{A}\right|=\left|x\right|\rightarrow\left(\phi \leftrightarrow{\phi'}^\mathcal{V}(x)\right)\right)
\end{equation*}

\begin{example}
    All atoms contain no elements.
    \begin{equation*}
        \forall x\forall y \left(\mathbb{A}\left(x\right)\rightarrow\neg y\in x\right)
    \end{equation*}
    All atoms of $\mathcal{V}^*(x)$ contain no elements.
    \begin{equation*}
        \forall x\in \mathcal{V}^*(x)\forall y\in\mathcal{V}^*(x) \left(\mathbb{A}'\left(x\right)\rightarrow\neg y\dotin x\right)
    \end{equation*}
\end{example}

\begin{example}
    The atoms are in bijection with the naturals.
    \begin{equation*}
        \exists f\in\mathcal{P}(\mathbb{A}\times\mathbb{N}).\left(\forall n\in\mathbb{N}\exists!a\in\mathbb{A}.\left\langle a,n\right\rangle\in f\wedge\forall a\in\mathbb{A}\exists!n\in\mathbb{N}.\left\langle a,n\right\rangle\in f\right)
    \end{equation*}
    The atoms of $\mathcal{V}^*(x)$ are in bijection with the naturals of $\mathcal{V}^*(x)$.
    \begin{equation*}
        \exists f\dotin\mathcal{P}^{\mathcal{V}^*(x)}(\mathbb{A}'\times^{\mathcal{V}^*(x)}\mathbb{N}^{\mathcal{V}^*(x)}).\left(
        \begin{array}{l}
             \forall n\dotin\mathbb{N}^{\mathcal{V}^*(x)}\exists!a\dotin\mathbb{A}'.\left\langle a,n\right\rangle^{\mathcal{V}^*(x)}\dotin f \wedge\\
             \forall a\dotin\mathbb{A}'\exists!n\dotin\mathbb{N}^{\mathcal{V}^*(x)}.\left\langle a,n\right\rangle^{\mathcal{V}^*(x)}\dotin f
        \end{array}
        \right)
    \end{equation*}
    The superscript $X^{\mathcal{V}^*(x)}$ is used where the set $X$ is uniquely defined by a formula and denotes that $X^{\mathcal{V}^*(x)}$ is uniquely defined by the formula translated to $\mathcal{V}^*(x)$.
\end{example}

\subsection{Orbit-Finiteness}

Unlike in $ZF$, where the only class automorphism on the universe is the identity, we can extend permutations of $\mathbb{A}$ to automorphisms of the universe.

\begin{definition}[\cite{bojanczyk_slightly_nodate}]
    For $\pi\in Sym(\mathbb{A})$, define:
    \begin{align*}
        \overline\pi&:\mathcal{V}(\mathbb{A})\rightarrow\mathcal{V}(\mathbb{A})\\
        \overline\pi(a)&=\pi(a)&\text{for }a\in\mathbb{A}\\
        \overline\pi(x)&=\left\{\overline\pi(y):y\in x\right\}&\text{for }x\notin\mathbb{A}
    \end{align*}
    Abusing notation, we will write $\pi$ for $\overline\pi$.
\end{definition}
Let $G\leqslant Sym(\mathbb{A})$.
\begin{definition}[Support]
\label{def:support}
    Let $\mathbf{a}\in\mathbb{A}^{<\omega}$.
    \newline We say that $\mathbf{a}$ supports $x$ if for all $\pi\in G$ with $\pi(\mathbf{a})=\mathbf{a}$, $\pi(x)=x$.
    \newline If there exists $\mathbf{a}\in\mathbb{A}^{<\omega}$ supporting $x$, then we say that $x$ is finitely supported.
\end{definition}
\begin{remark}
    The definition of `$\mathbf{a}$ supports $x$' is implicitly parametrised by a group. The group will either be clear from the context or explicitly stated.
\end{remark}
\begin{remark}
    If $\mathbf{a}\subseteq\mathbf{b}\in\mathbb{A}^{<\omega}$ and $\mathbf{a}$ supports $x$ then so does $\mathbf{b}$.
\end{remark}

\begin{example}
\label{examples:support}
    Let $G=\left\{id_{\mathbb{A}}\right\}$. Then:
    \begin{itemize}
        \item $\forall x\in\mathcal{V}(\mathbb{A})$, $x$ has empty support (called equivariant).
    \end{itemize}
    Let $\left\{\underline0,\underline1,\dots\right\}= \mathbb{A}$ and $G=Sym(\mathbb{A})=Aut(\left\langle\mathbb{A}\right\rangle)$. Then:
    \begin{itemize}
        \item $\underline0$ is supported by $\underline0$.
        \item $\underline0$ is supported by $\left(\underline0,\underline1\right)$.
        \item $\left\{\emptyset,\left\{\emptyset\right\}\right\}$ is equivariant.
        \item $\left\{\left\{a,a'\right\}:a,a'\in\mathbb{A}\right\}$ is equivariant.
        \item For $\sigma\in Sym\left(\mathbb{N}\right)$, $x_\sigma:=\left\{\underline{\sigma 0},\left\{\underline{\sigma 1}\right\},\left\{\left\{\underline{\sigma2}\right\}\right\},\dots\right\}$ is not finitely supported.
        \item $\left\{x_\sigma:\sigma\in Sym\left(\mathbb{N}\right), \sigma5=5\right\}$ is supported by $\underline5$ but is not hereditarily finitely supported.
    \end{itemize}
    Let $\left\{\underline q:q\in\mathbb{Q}\right\}=\mathbb{A}$ and $G=Aut\left(\left\langle\mathbb{A},\leq\right\rangle\right)$:
    \begin{itemize}
        \item $\left(\underline0,\underline{\frac{5}{2}}\right]\cup\left[\underline3,\infty\right)$ has support $\left(\underline0,\underline{\frac{5}{2}},\underline3\right)$.
        \item $\bigcup_{n\in\mathbb{Z}}\left(\underline n, \underline {\left(n+0.5\right)}\right)$, $\left\{\underline q:q\in\mathbb{Q}\wedge q\leq \sqrt{2}\right\}$ are not finitely supported.
    \end{itemize}
\end{example}
\begin{definition}
    \label{def:support_structure}
    As we will be working with the automorphism groups of structures, for a structure $\mathcal{A}$ on the atoms, we use $\mathcal{A}$ in place of $Aut(\mathcal{A})$ in Definition \ref{def:support}.
\end{definition}
As in some of the above examples, we can have infinite sets that are finite up-to the group action. We formalise this notion.

\begin{definition}
\label{def:stabiliser}
    For $G\leqslant Aut(\mathbb{A})$ and $x\in\mathcal{V}(\mathbb{A})$, the stabiliser of $x$ in $G$ is
    \begin{equation*}
        G_x :=\left\{\pi\in G: \pi(x)=x\right\}
    \end{equation*}
\end{definition}

\begin{definition}
    For a group $G$ acting on $X$ and $x\in X$, the $G$-orbit of $x$ is $G\cdot \left\{x\right\}=\left\{\pi(x):\pi\in G\right\}$. 
\end{definition}

\begin{definition}[Orbit-Finite]
\label{def:orbit-finite}
    Let $G\leqslant Sym(\mathbb{A})$ and $x\in\mathcal{V}(\mathbb{A})$ be hereditarily finitely supported. We define $x$ to be orbit-finite if there is $\mathbf{a}\in\mathbb{A}^{<\omega}$ s.t. $x$ is a finite union of $G_{\mathbf{a}}$-orbits.
\end{definition}

\begin{example}
\label{examples:orbit-finite}
    \begin{itemize}
        \item $\mathbb{A}$ is a single orbit so is orbit-finite when $G$ is transitive.
        \item For $x$ infinite, $\mathcal{P}(x)$ is not orbit-finite.
        \item For $x$ finite, $x$ is orbit-finite.
    \end{itemize}
\end{example}
We want to say that `enough' sets are orbit-finite.
\begin{definition}[Oligomorphic]
    $G\leqslant Sym(\mathbb{A})$ is oligomorphic if for all $n\in\mathbb{N}$, $\mathbb{A}^n$ is a finite union of $G$-orbits.
    \newline When $G=Aut(\mathcal{A})$, we say that $\mathcal{A}$ is oligomorphic.
\end{definition}

\begin{remark}
    $G$ is oligomorphic iff for all $n\in\mathbb{N}$, $\mathbb{A}^n$ is orbit-finite.
    \newline $\Rightarrow$ trivial
    \newline $\Leftarrow$ for $\mathbf{a}\in\mathbb{A}^{<\omega}$, $G_\mathbf{a}$-orbits refine $G$-orbits so $\mathbb{A}^{n}$ is contained within a finite union of $G$-orbits and $G$ fixes $\mathbb{A}^n$ so $\mathbb{A}^n$ is equal to the finite union of $G$-orbits.
\end{remark}

\begin{example}
\label{examples:oligomorphic}
    \begin{enumerate}
        \item If $\mathcal{A}$ is countable then $Th(\mathcal{A})$ is $\aleph_0$-categorical iff $\mathcal{A}$ is oligomorphic (Ryll-Nardzewski \cite{hodges_model_1993}).
        \begin{itemize}
            \item The structure on the atoms in the language of equality, $\left\langle \mathbb{A}\right\rangle$.
            \item The structure of inequality on the rational numbers, $\left\langle \mathbb{Q},\leq\right\rangle$, models the theory of dense, linear orders (Cantor's Isomorphism Theorem \cite{bhattacharjee_rational_1997}).
            \item The Rado Graph, $\left\langle V, E\right\rangle$, where $V$ is the set of vertices and $E$ is the binary edge relation (Erd\H{o}s and R\'{e}nyi \cite{erdos_asymmetric_1963}).
        \end{itemize}
        \item The group of bijections on $\mathbb{A}$, $Sym(\mathbb{A})$, is oligomorphic as $Sym(\mathbb{A})=Aut(\left\langle \mathbb{A}\right\rangle)$.
        \item Supergroups of oligomorphic groups are oligomorphic.
    \end{enumerate}
\end{example}

\begin{lemma}[\cite{bojanczyk_slightly_nodate}]
    \label{thm:orbit_finite_equiv}
    If $G$ is oligomorphic and $x\in\mathcal{V}(\mathbb{A})$ then 
    \begin{align*}
        &x\text{ is orbit-finite}\\
        \text{iff }&\text{for all }\mathbf{a}\in\mathbb{A}^{<\omega},x\text{ is contained within a finite union of }G_\mathbf{a}\text{-orbits}
    \end{align*}
\end{lemma}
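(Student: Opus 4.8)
The plan is to prove the two implications separately, and my first observation is that the reverse implication ($\Leftarrow$) does not even need oligomorphicity. Since $x$ is hereditarily finitely supported it has some support $\mathbf{b}\in\mathbb{A}^{<\omega}$, so every $\pi\in G_{\mathbf{b}}$ fixes $x$ and hence permutes its elements; thus whenever $y\in x$ the whole orbit $G_{\mathbf{b}}\cdot\{y\}$ lies inside $x$, i.e.\ $x$ is a (possibly infinite) union of $G_{\mathbf{b}}$-orbits. Instantiating the hypothesis at $\mathbf{a}:=\mathbf{b}$ says $x$ is contained in finitely many $G_{\mathbf{b}}$-orbits; combining this with the previous sentence shows $x$ is \emph{exactly} a finite union of $G_{\mathbf{b}}$-orbits, so $x$ is orbit-finite by Definition~\ref{def:orbit-finite}.

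For the forward implication ($\Rightarrow$) I would fix $\mathbf{b}$ with $x=O_1\cup\dots\cup O_k$ where each $O_i=G_{\mathbf{b}}\cdot\{y_i\}$ is a $G_{\mathbf{b}}$-orbit, and take an arbitrary $\mathbf{a}\in\mathbb{A}^{<\omega}$. Setting $\mathbf{c}:=\mathbf{b}\mathbf{a}$ (the concatenation) gives $G_{\mathbf{c}}=G_{\mathbf{b}}\cap G_{\mathbf{a}}\leqslant G_{\mathbf{a}}$, so every $G_{\mathbf{c}}$-orbit sits inside a single $G_{\mathbf{a}}$-orbit. It therefore suffices to show $x$ meets only finitely many $G_{\mathbf{c}}$-orbits, and since $x$ is the union of the $k$ sets $O_i$ it is enough to show that each $G_{\mathbf{b}}$-orbit $O_i$ splits into finitely many $G_{\mathbf{c}}$-orbits.

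To bound the $G_{\mathbf{c}}$-orbits inside $O=G_{\mathbf{b}}\cdot\{y\}$ I would use that $y$ is finitely supported: picking a support $\mathbf{s}$ of $y$, the assignment $\Phi\bigl(\pi(\mathbf{s})\bigr):=\pi(y)$ for $\pi\in G_{\mathbf{b}}$ defines a map from the tuple-orbit $G_{\mathbf{b}}\cdot\{\mathbf{s}\}\subseteq\mathbb{A}^{|\mathbf{s}|}$ onto $O$. It is well-defined because $\pi(\mathbf{s})=\pi'(\mathbf{s})$ forces $(\pi')^{-1}\pi$ to fix $\mathbf{s}$ pointwise and hence to fix $y$; it is surjective by construction; and it is $G_{\mathbf{c}}$-equivariant. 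A surjective equivariant map carries orbits onto orbits, so $O$ splits into at most as many $G_{\mathbf{c}}$-orbits as $\mathbb{A}^{|\mathbf{s}|}$ does. It then remains to verify that $\mathbb{A}^{n}$ is a finite union of $G_{\mathbf{c}}$-orbits, i.e.\ that the stabiliser of a finite tuple in an oligomorphic group is again oligomorphic; this holds because the $G_{\mathbf{c}}$-orbits on $\mathbb{A}^{n}$ biject with the $G$-orbits on $\mathbb{A}^{|\mathbf{c}|+n}$ that meet $\{\mathbf{c}\}\times\mathbb{A}^{n}$, and there are only finitely many of the latter.

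I expect the main obstacle to be precisely this interaction between the quantifiers and oligomorphicity: Definition~\ref{def:orbit-finite} only guarantees \emph{one} good tuple $\mathbf{b}$, whereas the lemma demands control for \emph{every} $\mathbf{a}$, so the heart of the argument is showing that refining a $G_{\mathbf{b}}$-orbit along the smaller group $G_{\mathbf{c}}$ produces only finitely many pieces. Everything else is routine bookkeeping about how orbits refine under subgroup inclusion, and I would isolate the ``stabilisers preserve oligomorphicity'' fact as a short auxiliary lemma since it carries the real content.
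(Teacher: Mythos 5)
Your proposal is correct, but it does substantially more than the paper itself does: the paper never proves this lemma in-house --- it is imported from \cite{bojanczyk_slightly_nodate} --- and the only argument recorded in the text is Remark \ref{rem:orbit-finite2_implies_orbit-finite1}, which is exactly your ($\Leftarrow$) direction, argued the same way (any support $\mathbf{b}$ of $x$ makes $x$ a union of $G_{\mathbf{b}}$-orbits, and a union of disjoint orbits contained in a finite union of orbits is itself a finite union; note both you and the paper implicitly use that $x$ is hereditarily finitely supported, which is licensed by Definition \ref{def:orbit-finite} only defining orbit-finiteness for such $x$). Your ($\Rightarrow$) direction supplies the content the paper delegates to the citation, and it is sound: passing to $\mathbf{c}=\mathbf{b}^\frown\mathbf{a}$ gives $G_{\mathbf{c}}=G_{\mathbf{b}}\cap G_{\mathbf{a}}$, so it suffices to bound the $G_{\mathbf{c}}$-orbits meeting $x$; the assignment $\pi(\mathbf{s})\mapsto\pi(y)$ on the tuple-orbit $G_{\mathbf{b}}\cdot\{\mathbf{s}\}$ is well defined precisely because $\mathbf{s}$ supports $y$ (here you use that elements of $x$ are finitely supported, again from hereditary finite support), it is surjective and $G_{\mathbf{c}}$-equivariant, so it maps $G_{\mathbf{c}}$-orbits of tuples onto the $G_{\mathbf{c}}$-orbits inside each $O_i$; and your auxiliary fact that tuple stabilisers of oligomorphic groups remain oligomorphic, via the injection $G_{\mathbf{c}}\cdot\{\mathbf{t}\}\mapsto G\cdot\{\mathbf{c}^\frown\mathbf{t}\}$ into the $G$-orbits of $\mathbb{A}^{\left|\mathbf{c}\right|+n}$, is correct and is indeed where the oligomorphicity hypothesis enters. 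What your version buys is a self-contained proof inside the paper's own framework of supports and orbits; what the paper's choice buys is brevity, treating the equivalence as a known result and only spelling out the direction (yours and its $\Leftarrow$) that survives without oligomorphicity, since that weaker implication is what is actually invoked later (Remark \ref{rem:orbit-finite2_implies_orbit-finite1} is used in Theorem \ref{thm:oligomorphic_pm}).
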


\begin{remark}
    \label{rem:orbit-finite2_implies_orbit-finite1}
    Without $G$ being oligomorphic, we still have that the second condition implies the first. Indeed, suppose that the second condition holds on $x$ then there exists $\mathbf{a}\in\mathbb{A}^{<\omega}$ s.t. $x$ is supported by $\mathbf{a}$ so that $x$ is a union of $G_{\mathbf{a}}$-orbits and $x$ is contained within a finite union of $G_\mathbf{a}$-orbits. So $x$ is a finite union of $G_\mathbf{a}$-orbits as orbits are disjoint.
\end{remark}

\subsection{Definability}
\label{sec:definability}
We can use orbit-finite sets in orbit-finite constructions e.g. Turing machines. We usually do this by specifying a structure on the atoms, $\mathcal{A}$, and using $Aut(\mathcal{A})$ to define orbit-finiteness. However, $Aut(\mathcal{A})$ is not explicitly known. Instead, we will work with the types (in the sense of logic and model-theory) of $\mathcal{A}$ as these are definable. Since we are working with a specific structure, we will abuse notation by conflating the types of $\mathcal{A}$ and their realisations in $\mathcal{A}$.
\begin{definition}
    \label{def:type}
    For $\mathbf{a}\in\mathbb{A}^{n}$ define:
    \begin{equation*}
        tp_\mathcal{A}(\mathbf{a})=\left\{\mathbf{b}\in\mathbb{A}^n:\text{for all formulae }\phi(x_0,\dots,x_{n-1}), \mathcal{A}\models\phi(\mathbf{a})\text{ iff }\mathcal{A}\models\phi(\mathbf{b})\right\}
    \end{equation*} 
\end{definition}
The types of $\mathcal{A}$ are always unions of $Aut(\mathcal{A})$-orbits but will not necessarily be $Aut(\mathcal{A})$-orbits. Hence, working with $\mathcal{A}$-types may differ from working with $Aut(\mathcal{A})$-orbits, suggesting the following definition.
\begin{definition}
\label{def:aleph_0_homo}
    Let $\mathcal{A}$ be a structure on $\mathbb{A}$. Define $\mathcal{D}$ to be $\aleph_0$-homogeneous if for all $n\in\mathbb{N}$ and $\mathbf{a},\mathbf{b}\in\mathbb{A}^n$ with $tp_\mathcal{A}(\mathbf{a})=tp_\mathcal{A}(\mathbf{b})$, there is $\pi\in Aut(\mathcal{A})$ with $\pi(\mathbf{a})=\mathbf{b}$ i.e. the orbits of $\mathbb{A}^{<\omega}$ are the $\mathcal{A}$-types.
\end{definition}
The following standard result from model theory provides numerous $\aleph_0$-homogeneous structures.
\begin{theorem}[\cite{hodges_model_1993}]
    If $\mathcal{A}$ is a countable and oligomorphic structure, then $\mathcal{A}$ is $\aleph_0$-homogeneous.
\end{theorem}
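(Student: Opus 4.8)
The plan is to prove this by a back-and-forth argument, using oligomorphicity to guarantee that every realised type is isolated by a single formula. Throughout, fix tuples $\mathbf{a},\mathbf{b}\in\mathbb{A}^n$ with $tp_\mathcal{A}(\mathbf{a})=tp_\mathcal{A}(\mathbf{b})$; the goal is to build $\pi\in Aut(\mathcal{A})$ with $\pi(\mathbf{a})=\mathbf{b}$.

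First I would show that for each $k$ there are only finitely many realised complete $k$-types. Since tuples in the same $Aut(\mathcal{A})$-orbit satisfy exactly the same formulae, the assignment sending an orbit to the common type of its members is well-defined and surjective onto the realised types; as $\mathcal{A}$ is oligomorphic, $\mathbb{A}^k$ splits into finitely many orbits, so there are only finitely many realised $k$-types $p_1,\dots,p_m$. From this I would deduce that each $p_i$ is isolated: being distinct complete types, for each $j\neq i$ there is a formula $\phi_{ij}\in p_i\setminus p_j$, and then $\psi_i:=\bigwedge_{j\neq i}\phi_{ij}$ lies in $p_i$ but in no other $p_j$, so any tuple satisfying $\psi_i$ must realise $p_i$.

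With isolation in hand, the key extension lemma follows. Suppose $p$ is a finite type-preserving partial map sending a tuple $\mathbf{a}'$ to $\mathbf{b}'$, so $tp_\mathcal{A}(\mathbf{a}')=tp_\mathcal{A}(\mathbf{b}')$, and let $c\in\mathbb{A}$ be arbitrary. Let $\psi(\bar{x},y)$ isolate $tp_\mathcal{A}(\mathbf{a}',c)$. Since $\mathcal{A}\models\exists y\,\psi(\mathbf{a}',y)$ (witnessed by $c$), this existential formula lies in $tp_\mathcal{A}(\mathbf{a}')=tp_\mathcal{A}(\mathbf{b}')$, whence $\mathcal{A}\models\exists y\,\psi(\mathbf{b}',y)$; choosing a witness $d$ gives $\mathcal{A}\models\psi(\mathbf{b}',d)$, and isolation forces $tp_\mathcal{A}(\mathbf{b}',d)=tp_\mathcal{A}(\mathbf{a}',c)$. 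The symmetric statement (given $d$ in the range, find a suitable $c$) holds by the same argument with the roles of $\mathbf{a}'$ and $\mathbf{b}'$ exchanged.

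Finally I would run the back-and-forth. Enumerate $\mathbb{A}=\{e_0,e_1,\dots\}$ (possible as $\mathcal{A}$ is countable) and build an increasing chain of finite type-preserving partial maps $p_0\subseteq p_1\subseteq\cdots$ starting from $p_0=\{(a_i,b_i):i<n\}$; at even stages use the forth direction of the extension lemma to bring $e_s$ into the domain, at odd stages use the back direction to bring $e_s$ into the range. The union $\pi=\bigcup_s p_s$ is then a bijection of $\mathbb{A}$ (total by the forth steps, surjective by the back steps) preserving the type of every finite tuple; since atomic formulae and their negations are among these, $\pi$ preserves every relation of $\mathcal{A}$ in both directions and is therefore an automorphism with $\pi(\mathbf{a})=\mathbf{b}$. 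The one genuinely non-routine step is the isolation of types, which is exactly where countability and oligomorphicity enter; this is essentially the content of the Ryll-Nardzewski theorem cited in Example \ref{examples:oligomorphic}, so an alternative route would be to extract isolation directly from $\aleph_0$-categoricity.
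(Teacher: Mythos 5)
Your proof is correct, and in fact the paper gives no proof of this statement at all — it is quoted as a standard result and deferred to \cite{hodges_model_1993}. Your argument (finitely many realised $k$-types because orbits surject onto realised types, isolation of each realised type by a finite conjunction, an extension lemma transferring a realised type along tuples of equal type, and a back-and-forth over a countable enumeration to assemble the automorphism) is precisely the standard argument underlying that citation, so there is nothing to reconcile between your route and the paper's.
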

\begin{example}
\label{examples:aleph_0_homo}
The following structures are $\aleph_0$-homogeneous: $\left\langle \mathbb{A}\right\rangle$; $\left\langle \mathbb{Q},\leq\right\rangle$; $\left\langle \mathbb{N},\underline{}+1\right\rangle$; $\left\langle \mathbb{R},\leq\right\rangle$.
\end{example}
We will use the following construction of partial orders in examples throughout this thesis.
\begin{definition}
    \label{def:order_prod_coprod}
    If $\left\langle A,\leq_A\right\rangle$ and $\left\langle B,\leq_B\right\rangle$ are partial orders then define $\left\langle A\oplus B,\leq_{A\oplus B}\right\rangle$ to be the partial order on the disjoint union of $A$ and $B$, $A\oplus B$, where all elements of $A$ are smaller than all elements of $B$ and define $\left\langle A\otimes B,\leq_{A\otimes B}\right\rangle$ to be the partial order on the product of $A$ and $B$, $A\otimes B$, given by the lexicographic order. These are coproducts and products in the category of partial orders. 
\end{definition}
\begin{example}
The structure $\left\langle \mathbb{R}\oplus \mathbb{Q},\leq\right\rangle$ is not $\aleph_0$-homogeneous but is oligomorphic. See Example \ref{examples:D}.
\end{example}

Our current goal is to form orbit-finite sets from $\mathcal{A}$-types, which we do in the following theorems.

\begin{theorem}
\label{thm:equivariant_union_of_types}
    If $\mathcal{A}$ is $\aleph_0$-homogeneous, then $X\subseteq\mathbb{A}^n$ is equivariant iff it is a union of $\mathcal{A}$-types.
\end{theorem}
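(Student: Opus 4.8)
The plan is to reduce the statement to the single structural fact that, under $\aleph_0$-homogeneity, the $Aut(\mathcal{A})$-orbits on $\mathbb{A}^n$ coincide with the $\mathcal{A}$-types. Once this is established, both directions of the biconditional follow almost immediately by unwinding the definition of equivariance (empty support), which amounts to $\pi(X)=X$ for every $\pi\in Aut(\mathcal{A})$.

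First I would record the observation, valid for \emph{any} structure, that automorphisms preserve types: if $\pi\in Aut(\mathcal{A})$ then $\mathcal{A}\models\phi(\mathbf{b})$ iff $\mathcal{A}\models\phi(\pi(\mathbf{b}))$ for every formula $\phi$, whence $tp_\mathcal{A}(\pi(\mathbf{b}))=tp_\mathcal{A}(\mathbf{b})$. Consequently each type is itself equivariant, so each $Aut(\mathcal{A})$-orbit lies inside a single type; that is, orbits refine types. The hypothesis of $\aleph_0$-homogeneity supplies the reverse refinement: by Definition \ref{def:aleph_0_homo}, $tp_\mathcal{A}(\mathbf{a})=tp_\mathcal{A}(\mathbf{b})$ yields a $\pi\in Aut(\mathcal{A})$ with $\pi(\mathbf{a})=\mathbf{b}$, so any two tuples of the same type share an orbit, i.e. types refine orbits. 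Combining the two refinements gives that orbits and types coincide on $\mathbb{A}^n$.

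With this in hand, the two directions are short. For the $(\Leftarrow)$ direction, if $X$ is a union of types then it is a union of equivariant sets and hence equivariant, since $\pi\bigl(\bigcup_i T_i\bigr)=\bigcup_i \pi(T_i)=\bigcup_i T_i$; note this half does not even use $\aleph_0$-homogeneity. For the $(\Rightarrow)$ direction, if $X$ is equivariant then for $\mathbf{a}\in X$ and any $\pi$ we have $\pi(\mathbf{a})\in\pi(X)=X$, so $X$ is a union of orbits; by the coincidence of orbits and types just proved, $X$ is a union of types.

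There is no serious obstacle here: the content is concentrated entirely in the orbit-equals-type identification, and even that is essentially a restatement of Definition \ref{def:aleph_0_homo} together with the elementary fact that automorphisms preserve first-order formulae. The only point demanding care is keeping the direction of each refinement straight — orbits-refine-types is automatic, while types-refine-orbits is exactly where homogeneity is consumed — so I would state both inclusions explicitly rather than asserting the equality in one step.
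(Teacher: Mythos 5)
Your proof is correct and takes essentially the same route as the paper's: both directions consume exactly the same two facts in the same places, namely that $\aleph_0$-homogeneity makes tuples of equal type conjugate under $Aut(\mathcal{A})$ (the forward direction) and that automorphisms preserve types, so types are equivariant (the backward direction). The only difference is organizational — you extract the coincidence of orbits and types as an explicit intermediate lemma, which the paper inlines into its two element-chases and already gestures at in the ``i.e.'' clause of its definition of $\aleph_0$-homogeneity.
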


\begin{proof}
    Let $X\subseteq\mathbb{A}^n$.
    \newline Suppose that $X$ is equivariant and let $\mathbf{a}\in X$. For $\mathbf{b}\in tp_\mathcal{A}(\mathbf{a})$ there is $\pi\in Aut(\mathcal{A})$ with $\pi(\mathbf{a})=\mathbf{b}$, as $\mathcal{A}$ is $\aleph_0$-homogeneous, so $\mathbf{b}\in X$, as $X$ is equivariant. So $\mathbf{a}\in tp_\mathcal{A}(\mathbf{a})\subseteq X$ for $\mathbf{a}\in X$. So $X=\bigcup_{\mathbf{a}\in X} {tp}_{\mathcal{A}}(\mathbf{a})$.
    \newline Suppose that $X$ is a union of $\mathcal{A}$-types and let $\pi\in Aut(\mathcal{A})$. For $\mathbf{a}\in X$, $X\supseteq tp_\mathcal{A}(\mathbf{a})=tp_\mathcal{A}(\pi(\mathbf{a}))=tp_\mathcal{A}(\pi^{-1}(\mathbf{a}))$ so $\pi(\mathbf{a}),\pi^{-1}(\mathbf{a})\in X$. So $\pi(X)\subseteq X=\pi^{-1}(\pi(X))\subseteq\pi^{-1}(X)\subseteq X$ hence $\pi(X)=X$.
\end{proof}

\begin{corollary}
\label{cor:supported_union_of_types}
    If $\mathcal{A}$ is $\aleph_0$-homogeneous, then $X\subseteq \mathbb{A}^k$ is supported by $\mathbf{a}\in\mathbb{A}^n$ iff $X$ is a union of $\mathcal{A}$-types in parameters $\mathbf{a}$.
\end{corollary}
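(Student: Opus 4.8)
The plan is to reduce the statement about support by a tuple $\mathbf{a}$ to the already-proven equivariance statement (Theorem \ref{thm:equivariant_union_of_types}) by passing to the stabiliser group and the expanded structure where $\mathbf{a}$ is named by constants. First I would fix $\mathbf{a}\in\mathbb{A}^n$ and form the structure $\mathcal{A}_\mathbf{a}$, obtained from $\mathcal{A}$ by adding a constant symbol for each entry of $\mathbf{a}$. The key observation is that $Aut(\mathcal{A}_\mathbf{a})=Aut(\mathcal{A})_\mathbf{a}$, the pointwise stabiliser of $\mathbf{a}$ in $Aut(\mathcal{A})$, since an automorphism fixes the named constants precisely when it fixes each coordinate of $\mathbf{a}$. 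Under Definition \ref{def:support}, saying that $\mathbf{a}$ supports $X$ is exactly saying that $X$ is equivariant with respect to $Aut(\mathcal{A})_\mathbf{a}=Aut(\mathcal{A}_\mathbf{a})$.

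Next I would verify that $\mathcal{A}_\mathbf{a}$ inherits $\aleph_0$-homogeneity from $\mathcal{A}$, so that Theorem \ref{thm:equivariant_union_of_types} can be applied to $\mathcal{A}_\mathbf{a}$. This should follow because formulae over $\mathcal{A}_\mathbf{a}$ in variables $x_0,\dots,x_{k-1}$ correspond bijectively to formulae over $\mathcal{A}$ in those variables together with the parameters $\mathbf{a}$; consequently $tp_{\mathcal{A}_\mathbf{a}}(\mathbf{b})$ is determined by $tp_\mathcal{A}(\mathbf{b}\,\mathbf{a})$ (the type of the concatenated tuple in $\mathcal{A}$). If $tp_{\mathcal{A}_\mathbf{a}}(\mathbf{b})=tp_{\mathcal{A}_\mathbf{a}}(\mathbf{b}')$ then $tp_\mathcal{A}(\mathbf{b}\,\mathbf{a})=tp_\mathcal{A}(\mathbf{b}'\,\mathbf{a})$, so by $\aleph_0$-homogeneity of $\mathcal{A}$ there is $\pi\in Aut(\mathcal{A})$ sending $\mathbf{b}\,\mathbf{a}$ to $\mathbf{b}'\,\mathbf{a}$; this $\pi$ fixes $\mathbf{a}$, hence lies in $Aut(\mathcal{A}_\mathbf{a})$ and witnesses $\aleph_0$-homogeneity there.

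Applying Theorem \ref{thm:equivariant_union_of_types} to $\mathcal{A}_\mathbf{a}$ then gives: $X\subseteq\mathbb{A}^k$ is supported by $\mathbf{a}$ (equivalently $Aut(\mathcal{A}_\mathbf{a})$-equivariant) iff $X$ is a union of $\mathcal{A}_\mathbf{a}$-types. Finally I would translate ``union of $\mathcal{A}_\mathbf{a}$-types'' back into the phrase ``union of $\mathcal{A}$-types in parameters $\mathbf{a}$'', which is precisely the claim; an $\mathcal{A}_\mathbf{a}$-type of $\mathbf{b}$ is cut out by the $\mathcal{A}$-formulae satisfied by $\mathbf{b}$ with the entries of $\mathbf{a}$ substituted as parameters.

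I expect the main obstacle to be the bookkeeping in the correspondence between $\mathcal{A}_\mathbf{a}$-types and $\mathcal{A}$-types in parameters $\mathbf{a}$, and in particular making the identity $Aut(\mathcal{A}_\mathbf{a})=Aut(\mathcal{A})_\mathbf{a}$ precise enough that Definition \ref{def:support} matches equivariance over the expanded structure. Everything else is a direct transport of Theorem \ref{thm:equivariant_union_of_types}; the conceptual content is simply that ``supported by $\mathbf{a}$'' is ``equivariant after naming $\mathbf{a}$''.
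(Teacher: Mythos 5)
Your proposal is correct, but it follows a genuinely different route from the paper's proof. The paper never leaves the original structure: assuming $\mathbf{a}$ supports $X$, it forms the saturation $Y:=\left\{\pi(\mathbf{a}^\frown\mathbf{b}):\mathbf{b}\in X,\ \pi\in Aut(\mathcal{A})\right\}\subseteq\mathbb{A}^{n+k}$, which is equivariant by construction; it applies Theorem \ref{thm:equivariant_union_of_types} in arity $n+k$ to write $Y$ as a union of $\mathcal{A}$-types; and it then uses the support hypothesis to recover $X$ as the slice $\left\{\mathbf{b}\in\mathbb{A}^k:\mathbf{a}^\frown\mathbf{b}\in Y\right\}$, which exhibits $X$ as a union of $\mathcal{A}$-types in parameters $\mathbf{a}$. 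You instead keep the arity at $k$ but expand the structure: you identify $Aut(\mathcal{A}_\mathbf{a})$ with the pointwise stabiliser $Aut(\mathcal{A})_\mathbf{a}$, prove that naming constants preserves $\aleph_0$-homogeneity, and apply Theorem \ref{thm:equivariant_union_of_types} to $\mathcal{A}_\mathbf{a}$. The trade-off is this: your route costs an extra lemma (homogeneity of the expansion --- your argument via concatenated types is right, and is in fact the same type correspondence the paper invokes silently when it writes $tp_{\mathcal{A}\cup\mathbf{a}}(\mathbf{b})$), but in return both directions of the corollary fall out of a single chain of equivalences, and you isolate a reusable principle: being supported by $\mathbf{a}$ is the same as being equivariant after naming $\mathbf{a}$. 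The paper's route is shorter and needs no expanded structures, but it treats the two directions asymmetrically (right-to-left is dismissed as trivial) and hides the parameter-naming idea inside the final slicing step, where the support hypothesis is what guarantees the slice of $Y$ at $\mathbf{a}$ is exactly $X$.
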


\begin{proof}
    Right to left is trivial.
    \newline Suppose that $\mathbf{a}\in\mathbb{A}^n$ supports $X\subseteq\mathbb{A}^k$. Let
    \begin{equation*}
        Y:=\left\{\pi(\mathbf{a}^\frown \mathbf{b}):\mathbf{b}\in X,\pi\in Aut(\mathcal{A})\right\}
    \end{equation*}
    $Y$ is equivariant so, by Theorem \ref{thm:equivariant_union_of_types}, $Y=\bigcup_{\mathbf{c}\in Y} {tp}_{\mathcal{A}}(\mathbf{c})$.
    \newline Note that $X=\left\{\mathbf{b}\in\mathbb{A}^k:\mathbf{a}^\frown\mathbf{b}\in Y\right\}$ to conclude that $X=\bigcup_{\mathbf{a}^\frown\mathbf{b}\in Y} {tp}_{\mathcal{A}\cup\mathbf{a}}(\mathbf{b})$.
\end{proof}

\begin{remark}
    The converse is also true. If $\mathcal{A}$ is not $\aleph_0$-homogeneous and $\mathbf{a}\in\mathbb{A}^n$ is a witness (there is $\mathbf{b}\in tp_\mathcal{A}(\mathbf{a})$ with $\mathbf{b}\notin Aut(\mathcal{A})\cdot\left\{\mathbf{a}\right\}$), then $Aut(\mathcal{A})\cdot\left\{\mathbf{a}\right\}$ is equivariant but not a union of $\mathcal{A}$-types.
\end{remark}
\section{Permutation Models}
\label{sec:permutation_models}

The class of hereditarily finitely supported elements models $ZFA$.
\newline This is Fraenkel's construction.
\newline This construction generalises to `permutation models', and the `finite-support permutation models' are a special case.

\subsection{Summary}
In this section we:
\begin{itemize}
    \item Introduce permutation models of $ZFA$.
    \begin{itemize}
        \item Construct the `first Fraenkel model'.
    \end{itemize}
    \item Show that, for a fixed first-order structure $\mathcal{A}$, extending a universe of $ZF$ by atoms in bijection with $\mathcal{A}$ and taking a finite-support permutation model of a structure on the atoms induced by such a bijection, is unique up-to isomorphism (Remark \ref{rem:pm_well-defined}).
    \item We characterise automorphism groups of structures of arity ${<}\kappa$ as closed groups in some topology, for $\kappa$ an infinite cardinal (Theorem \ref{thm:closed_group}).
    \begin{itemize}
        \item This generalises the standard result for standard automorphism groups as closed groups in the topology of pointwise convergence.
    \end{itemize}
    \item We show that taking closures in the above topology preserves hereditary ${<}\kappa$-supports (Theorem \ref{thm:support_closure}).
    \item We prove that all ${<}\kappa$-support permutation models are induced by the automorphism group of some structure of arity ${<}\kappa$ (Corollary \ref{thm:adequacy_automorphism}).
    \begin{itemize}
        \item This justifies the focus on structures in the literature, as using arbitrary groups.
    \end{itemize}
    \item We show that structures on the atoms induce the same permutation model if and only if they are mutually symmetric (Theorem \ref{thm:different_structure_model}).
\end{itemize}

\subsection{Definitions }
We can rephrase being finitely supported (with regards to $G$) using subgroups of $G$.
For all $x$, $x$ is finitely supported iff there exists $ \mathbf{a}\in\mathbb{A}^{<\omega}$ with $G_\mathbf{a}\leqslant G_x$.
\begin{definition}
    \label{def:finite_support_filter}
    Define the finite-support filter on $G$ to be:
    \begin{equation}
        \mathcal{F}^G_{fin}:=\left\{H\leqslant G_{\mathbf{a}}:\mathbf{a}\in\mathbb{A}^{<\omega} \right\}
    \end{equation}
\end{definition}
Observe that for all $x$, $x$ is finitely supported iff $G_x\in\mathcal{F}^G_{fin}$.
Note that $\mathcal{F}$ is a filter of groups on $G$. Indeed, being a filter is almost sufficient for the construction of permutation models to work.

\begin{definition}[Normal Filter \cite{mostowski_uber_1939}]
\label{def:normal_filter}
    Let $\mathbb{A}$ be the set of atoms and $G\leqslant Sym(\mathbb{A})$.
    \newline $\mathcal{F}\subseteq\mathcal{P}(G)$ is a normal filter of subgroups of $G$ if:
    \begin{itemize}
        \item for $H\in\mathcal{F}$, $H$ is a subgroup of $G$;
        \item for $H\in\mathcal{F}$, if $H\leqslant H'\leqslant G$ then $H'\in\mathcal{F}$;
        \item $\mathcal{F}$ is non-empty;
        \item for $H,H'\in\mathcal{F}$, $H\cap H'\in\mathcal{F}$;
        \item for $H\in\mathcal{F}$ and $\pi\in G$, $\pi^{-1} H \pi\in\mathcal{F}$; and
        \item for $a\in\mathbb{A}$, $G_a\in\mathcal{F}$.
    \end{itemize}
\end{definition}

\begin{example}
\label{examples:filters}
    Let $G\leqslant Sym(\mathbb{A})$.
    \begin{itemize}
        \item The finite support filter, $\mathcal{F}^G_{fin}$, is the upward closure of $\left\{G_{\mathbf{a}}:\mathbf{a}\in\mathbb{A}^{<\omega}\right\}$. This is minimal.
        \item The countable support filter, the upward closure of $\left\{G_f:f\in\mathbb{A}^\mathbb{N}\right\}$.
        \item The trivial filter $\left\{H\leqslant G\right\}$.
        \item For $\mathcal{F}$, a normal filter on $G$, and $H\in\mathcal{F}$, $H\#\mathcal{F}:=\left\{H\cap K:K\in\mathcal{F}\right\}$ is a normal filter on $H$.
    \end{itemize}
\end{example}
Two of the above examples are generated by filter bases.
\begin{definition}
\label{def:filter_basis}
    $\mathcal{B}$, a collection of subgroups of $G$, is a filter basis for a filter $\mathcal{F}$ on $G$ iff $\mathcal{F}=\left\{H\leqslant G:\exists B\in\mathcal{B}.B\leqslant H\right\}$.
\end{definition}

\begin{remark}
\label{rem:filter_basis}
    A collection $\mathcal{B}$ of subgroups of $G$ is a filter basis for some filter on $G$ (its upward closure, $\left\langle\mathcal{B}\right\rangle$) iff for all $a\in\mathbb{A}$ there exists $B\in \mathcal{B}$ with $B\leqslant G_a$ and for all $B,B'\in\mathcal{B}$ there exists $B''\in\mathcal{B}$ with $B''\leqslant B\cap B'$.
\end{remark}

\begin{example}
\label{examples:filter_basis_from_supports}
    Let $S$ be a set of subgroups of $G$ closed under conjugation by $G$.
    Let
    \begin{equation*}
        \# S:=\left\{\bigcap \mathcal{S}:\mathcal{S}\subseteq_{fin} S\right\}
    \end{equation*}
    If for all $a\in\mathbb{A}$ there exists $H\in\#S$ with $H\leqslant G_a$ then $\#S$ is a filter basis.
\end{example}

\begin{definition}
\label{def:kappa-support_filter}
    For $\kappa$ an infinite cardinal, the $\kappa$-support filter on $G$ is:
    \begin{equation*}
        \mathcal{F}^G_{{<}\kappa}:=\left\langle\left\{G_\mathbf{a}:\mathbf{a}\in\mathbb{A}^{{<}\kappa}\right\}\right\rangle
    \end{equation*}
    The finite-support filter is the case $\mathcal{F}^G_{<\aleph_0}=\mathcal{F}^G_{fin}$.
\end{definition}

\begin{definition}[Symmetric]
\label{def:symmetric}
    Let $x\in\mathcal{V}(\mathbb{A})$, $G\leqslant Sym(\mathbb{A})$, and $\mathcal{F}$ be a normal filter on $G$.
    \begin{itemize}
        \item $x$ is symmetric (with respect to $\mathcal{F}$) iff $G_x\in\mathcal{F}$.
        \item $x$ is hereditarily symmetric (with respect to $\mathcal{F}$) iff for all $y\in TC(\{x\})$, $y$ is symmetric.
    \end{itemize}
\end{definition}
\begin{definition}[Permutation Models]
\label{def:permutation_models}
    Let:
    \begin{itemize}
        \item $\left\langle\mathcal{M},\mathbb{A},\in\right\rangle$ be a model of $ZFA$; and
        \item $G,\mathcal{F}\in\mathcal{M}$ be a group and normal filter on the atoms;
    \end{itemize}
    then define the class:
    \begin{equation*}
        PM(\mathcal{M},G,\mathcal{F}):=\left\{x\in\mathcal{M}:x\text{ is hereditarily symmetric with respect to }\mathcal{F}\right\}
    \end{equation*}
\end{definition}

\begin{remark}
\label{rem:truth_relative_models}
    The elements of different models of $ZFA$ are different so truth in these models differ. In Definition \ref{def:permutation_models}, we require $\mathcal{M}$ to model that $G$ is a group and $\mathcal{F}$ is a normal filter on the atoms. In this case, the model, where the assumptions must hold, is obvious but this detail will become important as we work with multiple models of $ZFA$. We shall be explicit about this detail whenever it's ambiguous.
    For example, the definition of $PM(\mathcal{M},G,\mathcal{F})$ is inside $\mathcal{M}$. To be explicit, we could say:
    \begin{equation*}
        PM(\mathcal{M},G,\mathcal{F}):=\left\{x\in\mathcal{M}:\mathcal{M}\models x\text{ is hereditarily symmetric with respect to }\mathcal{F}\right\}
    \end{equation*}
\end{remark}

\begin{definition}
\label{def:natural_filter}
    Let:
    \begin{itemize}
        \item $\left\langle\mathcal{M},\mathbb{A},\in\right\rangle\models ZFA$;
        \item $G\leqslant Sym(\mathbb{A})$; and
        \item $\mathcal{F}$ be a normal filter on $G$;
    \end{itemize}
    define
    \begin{equation*}
        \mathcal{F}_{nat}:=\left\langle\left\{G_x|x\in PM(\mathcal{M},G,\mathcal{F})\right\}\right\rangle
    \end{equation*}
\end{definition}

\begin{remark}
    \label{rem:natural_filter}$PM(\mathcal{M},G,\mathcal{F})=PM\left(\mathcal{M},G,\mathcal{F}_{nat}\right)$ as they contain the same stabilisers.
\end{remark}
\begin{example}
    \label{examples:natural_filter}
    All filters of the form in Example \ref{examples:filter_basis_from_supports}, $\mathcal{F}$, are natural filters i.e. if $\mathcal{F}_{nat}=\mathcal{F}$. This includes $\mathcal{F}_{{<}\kappa}$ for all infinite cardinals $\kappa$. If $\mathcal{F}$ is a normal filter on $G$, then ${\left(\mathcal{F}_{nat}\right)}_{nat}=\mathcal{F}_{nat}$ as $\mathcal{F}_{nat}$ is of the form in Example \ref{examples:filter_basis_from_supports}.
\end{example}

\begin{theorem}[\cite{jech_axiom_1973}]
\label{theorem:permutation_model_ZFA}
    For $\left\langle\mathcal{M},\mathbb{A},\in\right\rangle\models ZFA$, $G\in\mathcal{M}$ a group acting on $\mathbb{A}$, and $\mathcal{F}\in\mathcal{M}$ a normal filter on $G$, then $\left\langle PM(\mathcal{M},G,\mathcal{F}),\mathbb{A},\in\right\rangle$ is a model of $ZFA$.
\end{theorem}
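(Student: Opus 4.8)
The plan is to verify each $ZFA$ axiom inside $PM := PM(\mathcal{M},G,\mathcal{F})$ by exhibiting the required witness in $PM$, organised around two principles: that $PM$ is a transitive subclass of $\mathcal{M}$, and that every $\overline{\pi}$ (for $\pi\in G$) is an $\in$-automorphism of $\mathcal{M}$ mapping $PM$ onto itself. First I would record the preliminaries. Transitivity is immediate: if $x\in PM$ and $y\in x$ then $TC(\{y\})\subseteq TC(\{x\})$, so $y$ is hereditarily symmetric and $y\in PM$. Each $\overline{\pi}$ fixes $\mathbb{A}$ setwise and is an $\in$-isomorphism of $\mathcal{M}$ onto $\mathcal{M}$, hence preserves satisfaction of every $\mathcal{L}$-formula, $\mathcal{M}\models\phi(\mathbf{x})$ iff $\mathcal{M}\models\phi(\overline{\pi}\,\mathbf{x})$. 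A direct computation gives $G_{\overline{\pi}(x)}=\pi G_x\pi^{-1}$, so by the normality of $\mathcal{F}$ symmetry is preserved by $\overline{\pi}$; since $\overline{\pi}$ maps $TC(\{x\})$ bijectively onto $TC(\{\overline{\pi}(x)\})$ it follows that $\overline{\pi}[PM]=PM$. Finally, if $x_0,\dots,x_k\in PM$ then $G_{x_0},\dots,G_{x_k}\in\mathcal{F}$, so $\bigcap_i G_{x_i}\in\mathcal{F}$ by finite-intersection closure, and this common subgroup stabilises any object definable from $x_0,\dots,x_k$.

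With these in hand the structural and atomic axioms are easy. Extensionality and Foundation are inherited from $\mathcal{M}$ because $PM$ is transitive. The empty set, $\omega$, every ordinal and every pure set are equivariant ($G_x=G\in\mathcal{F}$) and hereditarily symmetric, so $\emptyset\in PM$ and Infinity holds; likewise every atom is symmetric ($G_a\in\mathcal{F}$ by the last clause of Definition \ref{def:normal_filter}) and trivially hereditarily symmetric, and $\mathbb{A}$ itself is equivariant, giving Empty Atoms and Set of Atoms. For Pairing and Union, given $x,y\in PM$ the sets $\{x,y\}$ and $\bigcup x$ already lie in $\mathcal{M}$, their members are hereditarily symmetric, and $G_x\cap G_y\leqslant G_{\{x,y\}}$ and $G_x\leqslant G_{\bigcup x}$ place these sets in $\mathcal{F}$.

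The main obstacle is Power Set, Comprehension and Replacement, where the witness supplied by $\mathcal{M}$ need not lie in $PM$ and must be cut down. For Power Set, given $x\in PM$ I would take $P:=\{y\in\mathcal{P}(x):y\in PM\}$, which is a set of $\mathcal{M}$ by Comprehension in $\mathcal{M}$ (membership of $PM$ is $\mathcal{L}$-definable from the parameters $G,\mathcal{F}$); its members are hereditarily symmetric by construction, and any $\pi\in G_x$ permutes the subsets of $x$ and preserves $PM$, so fixes $P$ setwise, whence $G_x\leqslant G_P\in\mathcal{F}$ and $P\in PM$ serves as the relativised power set. Comprehension and Replacement are handled the same way: given $a_0,\dots,a_n\in PM$, the set $S:=\{y\in a_n:\phi^{PM}(a_0,\dots,a_{n-1},y)\}$ (respectively the replacement image) exists in $\mathcal{M}$, its members lie in $PM$, and the crucial point is that any $\pi\in\bigcap_i G_{a_i}$ fixes $S$: since $\overline{\pi}$ preserves both satisfaction in $\mathcal{M}$ and the class $PM$, it preserves the relativised formula $\phi^{PM}$ with its parameters fixed, so $\overline{\pi}(S)=S$. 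Thus $\bigcap_i G_{a_i}\leqslant G_S\in\mathcal{F}$ and $S$ is symmetric, completing the verification. I expect the delicate step throughout to be exactly this preservation of relativised satisfaction by $\overline{\pi}$, which is where the automorphism property and the identity $\overline{\pi}[PM]=PM$ do the real work.
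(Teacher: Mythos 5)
The paper does not prove this theorem itself; it cites it to Jech, and your argument is precisely the standard one found there: verify each axiom using transitivity of $PM$, the fact that each $\overline{\pi}$ ($\pi\in G$) is a definable $\in$-automorphism fixing $\mathbb{A}$, $G$, and (by normality) $\mathcal{F}$, the conjugation identity $G_{\overline{\pi}(x)}=\pi G_x\pi^{-1}$, and filter closure to get symmetry of the cut-down witnesses for Power Set, Comprehension, and Replacement. Your proof is correct, and you rightly isolate the one genuinely delicate step — that $\overline{\pi}$ preserves satisfaction of relativised formulae because $\overline{\pi}[PM]=PM$ — so there is nothing to fix.
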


\begin{example}
    \label{examples:FFM}
    Let $\left\langle\mathcal{M},\mathbb{A},\in\right\rangle\models ZFA+\textit{`$\mathbb{A}$ is infinite'}$.
    \newline Then $\mathcal{N}=PM\left(\mathcal{M},Sym(\mathbb{A}),\mathcal{F}^{Sym(\mathbb{A})}_{fin}\right)$ is the first Fraenkel Model. 
    \begin{claim}
        $\mathcal{N}\not\models AC$ (the axiom of choice)
    \end{claim}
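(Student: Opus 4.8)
The plan is to show that the set of atoms $\mathbb{A}$ cannot be well-ordered inside $\mathcal{N}$. Since $AC$ implies (over $ZFA$) that every set admits a well-ordering, and $\mathcal{N}\models ZFA$ by Theorem \ref{theorem:permutation_model_ZFA}, exhibiting a single non-well-orderable set suffices to establish $\mathcal{N}\not\models AC$.

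First I would record the only structural fact needed: every element of $\mathcal{N}=PM(\mathcal{M},Sym(\mathbb{A}),\mathcal{F}^{Sym(\mathbb{A})}_{fin})$ is hereditarily finitely supported, since for the finite-support filter `hereditarily symmetric' unwinds to `hereditarily finitely supported'. In particular, any binary relation $W\subseteq\mathbb{A}\times\mathbb{A}$ with $W\in\mathcal{N}$ carries a finite support $\mathbf{a}\in\mathbb{A}^{<\omega}$, meaning that every $\pi\in Sym(\mathbb{A})$ fixing $\mathbf{a}$ pointwise satisfies $\pi(W)=W$. Here it is worth noting that a Kuratowski pair $\langle b,c\rangle$ of atoms is automatically hereditarily finitely supported (by $b$ and $c$), so membership of $W$ in $\mathcal{N}$ reduces exactly to $W$ being finitely supported.

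The heart of the argument is a transposition/symmetry step showing that no finitely supported $W$ can even be a linear order on $\mathbb{A}$, let alone a well-order. Suppose toward a contradiction that $W$ is a linear order supported by $\mathbf{a}$. As $\mathbb{A}$ is infinite, pick distinct atoms $b,c\notin\mathbf{a}$ and let $\pi=(b\ c)$ be the transposition swapping them. Then $\pi$ fixes $\mathbf{a}$ pointwise, so $\pi(W)=W$; but computing the induced action on pairs gives $\pi(\langle b,c\rangle)=\langle c,b\rangle$, whence $\langle b,c\rangle\in W\iff\langle c,b\rangle\in W$. For distinct $b,c$ this contradicts antisymmetry of a linear order. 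Hence $\mathbb{A}$ carries no linear order, and a fortiori no well-order, in $\mathcal{N}$, giving $\mathcal{N}\not\models AC$.

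The argument is essentially routine; the only point requiring care is the reduction of `$W\in\mathcal{N}$' to `$W$ finitely supported' and the observation that infinitude of $\mathbb{A}$ guarantees two atoms lying outside the chosen finite support $\mathbf{a}$, which is exactly what the transposition step consumes. An equivalent and equally short route would run the same transposition argument against a putative bijection $f\colon\alpha\to\mathbb{A}$ with $\alpha$ an ordinal: since $\pi$ fixes every pure set (hence $\alpha$ and each of its elements) while swapping $b$ and $c$, the identity $\pi(f)=f$ forces $f(f^{-1}(b))=c$, contradicting $f(f^{-1}(b))=b$.
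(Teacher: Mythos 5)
Your proposal is correct, and it reaches the contradiction through a slightly different reduction than the paper, even though both rest on the identical core mechanism: pick two atoms $b,c$ outside the finite support $\mathbf{a}$ of the putative witness, apply the transposition $(b\ c)\in Sym(\mathbb{A})_{\mathbf{a}}$, and contradict the fact that the witness must be fixed. The paper's proof takes as its witness a surjection $f\colon x\twoheadrightarrow\mathbb{A}$ with $x$ pure (using that $AC$ implies every set is the image of an ordinal, and ordinals are pure, fixed pointwise by every $\pi$), and explicitly notes that ``being a surjection from a pure set onto $\mathbb{A}$'' is absolute between transitive classes so that the search can be run over $\mathcal{M}$. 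You instead take as witness a linear order $W\subseteq\mathbb{A}\times\mathbb{A}$ (using that $AC$ implies the well-ordering theorem), and the transposition yields $\langle b,c\rangle\in W\iff\langle c,b\rangle\in W$, killing totality-plus-antisymmetry. What your route buys is a formally stronger conclusion: not merely $\neg AC$ but failure of the ordering principle in $\mathcal{N}$ ($\mathbb{A}$ admits no linear order at all), a well-known feature of the first Fraenkel model; the paper's route instead shows $\mathbb{A}$ is not the surjective image of any pure set, a strengthening in a different direction. One small point your write-up leaves tacit, which the paper handles explicitly for its witness: you should note that ``$W$ is a linear order on $\mathbb{A}$'' is a $\Delta_0$ statement (quantifiers bounded by $\mathbb{A}$ and $W$), hence absolute between the transitive class $\mathcal{N}$ and $\mathcal{M}$, so that $\mathcal{N}$'s belief that $W$ is a linear order licenses your external transposition computation. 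This is routine and not a gap. Your closing alternative, running the transposition against a bijection $f\colon\alpha\to\mathbb{A}$ with $\alpha$ an ordinal, is essentially the paper's own argument specialised from surjections to bijections.
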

    \begin{proof}
        $AC$ implies that every set is in bijection with an ordinal, which are pure. So, if $\mathcal{N}\models AC$ then there is a surjection $f:x\twoheadrightarrow\mathbb{A}$ in $\mathcal{N}$ for some pure set $x$. The property of being a surjection from a pure set onto $\mathbb{A}$ is absolute between transitive classes. So, it suffices to check all $f:x\twoheadrightarrow\mathbb{A}\in\mathcal{M}$ to contradict choice in $\mathcal{N}$.
    \newline Let $f:x\twoheadrightarrow\mathbb{A}\in\mathcal{M}$ for $x$ pure. If $f\in\mathcal{N}$ then it is supported by some $\mathbf{a}\in\mathbb{A}^{<\omega}$. Let $b_0\not=b_1\in\mathbb{A}\setminus\mathbf{a}$ and consider $\left(\begin{array}{cc}b_0&b_1\end{array}\right)\in Sym(\mathbb{A})_\mathbf{a}$. $\left(\begin{array}{cc}b_0&b_1\end{array}\right)f\not=f$ as $b_0=f(x_0)$ for some $x_0\in x$ and $b_0\not= b_1=\left(\left(\begin{array}{cc}b_0&b_1\end{array}\right)f\right)(x_0)$. So $f$ is not supported by $\mathbf{a}$ for any $\mathbf{a}\in\mathbb{A}^{<\omega}$ so $f\notin\mathcal{N}$. $\mathcal{N}$ contains no surjection from a pure set to the set of atoms, so $\mathcal{N}\not\models AC$.
    \end{proof}
\end{example}
So, if there is a model of $ZFA$ then there is a model rejecting the axiom of choice.
\newline Now suppose $\mathcal{M}\models ZFA$ and $\mathcal{A^*}\in\mathcal{M}$ is a pure structure with $\mathcal{M}\models\left|\mathbb{A}\right|=\left|\mathcal{A}^*\right|$. We can biject $\mathbb{A}$ with $\mathcal{A}^*$ to construct a structure $\mathcal{A}\cong \mathcal{A}^*$ on the atoms and take the finite-support permutation model $PM(\mathcal{M},Aut(\mathcal{A}),\mathcal{F}^{Aut(\mathcal{A})}_{fin})$.
\begin{remark}
    \label{rem:pm_well-defined}
    The isomorphism $\mathcal{A}\cong \mathcal{A}^*$ induces an isomorphism $\mathcal{M}\cong \mathcal{V}^*(\mathcal{A}^*)$ by Theorem \ref{thm:atoms_iso_pure_class}, which restricts to an isomorphism
    \begin{equation*}
        PM(\mathcal{M},Aut(\mathcal{A}),\mathcal{F}^{Aut(\mathcal{A})}_{fin})\cong PM(\mathcal{V}^*(\mathcal{A}^*),Aut(f(F(\mathcal{A}^*))),\mathcal{F}^{Aut(f(F(\mathcal{A}^*)))}_{fin})
    \end{equation*}
    where the left-hand side is taken relative to $\mathcal{M}$, the right-hand side is taken relative to $\mathcal{V}^*(\mathcal{A}^*)$, $F$ is the map from Theorem \ref{thm:zfa_in_zf}.\ref{thm:zfa_in_zf_2}, and $f$ is the bijection between $F(\mathcal{A}^*)$ and the atoms in $\mathcal{V}^*(\mathcal{A}^*)$ from Theorem \ref{thm:zfa_in_zf}.\ref{thm:zfa_in_zf_3}.
\newline This isomorphism is definable so for every formula, $\phi(x)$, there is a formula, $\phi^*(x)$, with
\begin{equation*}
    \left(\phi(\mathcal{A})\right)^{PM(\mathcal{M},Aut(\mathcal{A}),\mathcal{F}^{Aut(\mathcal{A})}_{fin})}\leftrightarrow\left(\phi^*(\mathcal{A}^*\right))^{\mathcal{V}}
\end{equation*}
\newline Note that we made of choice of bijection between $\mathcal{A}^*$ and $\mathbb{A}$ to induce $\mathcal{A}$ and we could have chosen a different bijection to induce $\mathcal{A'}$ on the atoms instead. The right-hand-side is independent of this choice of bijection so that:
\begin{equation*}
    PM(\mathcal{M},Aut(\mathcal{A}),\mathcal{F}^{Aut(\mathcal{A})}_{fin})\cong PM(\mathcal{M},Aut(\mathcal{A}'),\mathcal{F}^{Aut(\mathcal{A}')}_{fin})
\end{equation*}
In other words, this construction is unique up-to-isomorphism.
\newline \textbf{The logic of a finite-support permutation model induced by a pure structure is fully determined by the pure universe}
\begin{definition}
\label{def:permutation_model_from_structure}
    We will write $PM(\mathcal{A}^*)$ for the finite-support permutation model induced in $\mathcal{V}^*(\mathcal{A}^*)$ by the structure $\underline{\mathcal{A}^*}:=f(F(\mathcal{A}^*))$. This determines, up-to-isomorphism, the above construction of $PM(\mathcal{M},Aut(\mathcal{A}),\mathcal{F}^{Aut(\mathcal{A})}_{fin})$. For $a\in\mathcal{A}^*$, we write $\underline{a}$ for the image of $a$ under this mapping.
\end{definition}

\end{remark}

\subsection{Adequacy of Automorphism Groups}

Section \ref{sec:definability} only applies when using the automorphism group of some first-order structure.
\newline Taking permutation models is more general than this. However, we will see that every finite-support permutation model arises from an automorphism group.

\begin{definition}[Topology of Point-wise Convergence on $\mathbb{A}^{\mathbb{A}}$]
\label{def:topology_of_point-wise_convergence}
    Define a topology on $\mathbb{A}^{\mathbb{A}}$ by giving $\mathbb{A}$ the discrete topology and taking the product topology.
    \newline The basic open sets are $U_s:=\left\{f\in\mathbb{A}^\mathbb{A}:s\subseteq f\right\}$, where $s:\mathbb{A}\rightharpoonup_{fin}\mathbb{A}$.
    \newline This induces the subspace topology on $Sym(\mathbb{A})$ with basis $S_s:=U_s\cap Sym(\mathbb{A})$.
\end{definition}
Although we mostly care about finite-support permutation models, the theorems in this section generalise to ${<}\kappa$-support permutation models (for $\kappa$ a cardinal). We will prove results for general $\kappa$.

\begin{definition}
\label{def:kappa_product_topology}
    Let $\kappa$ be an infinite cardinal. We define the ${<}\kappa$-product topology on $\mathbb{A}^\mathbb{A}$ by giving the basis $\left\{\bigcap_{j\in J}\pi^{-1}_j\left(U_j\right):J\subseteq\mathbb{A},\left|J\right|{<}\kappa , \forall j\in J. U_j\subseteq\mathbb{A}\right\}$.
    \newline This induces the subspace topology on $Sym(\mathbb{A})$.
    \newline Note that the ${<}\aleph_0$-product topology is the topology of point-wise convergence.
    \newline As before, we can say that the basic open sets are of the form $U_s:=\left\{f\in\mathbb{A}^\mathbb{A}:s\subseteq f\right\}$ for $s:\mathbb{A}\rightharpoonup_{{<}\kappa}\mathbb{A}$, inducing basic open sets on $Sym(\mathbb{A})$, $S_s:=U_s\cap Sym(\mathbb{A})$.
\end{definition}

\begin{definition}
\label{def:kappa_structure}
    Let $\kappa$ be an infinite cardinal. We define $\kappa$-structures to be like standard first-order logical structures, except we allow predicates of all arity strictly less than $\kappa$.
    \newline Note that $\aleph_0$-structures are the structures in first-order logic.
\end{definition}

\begin{theorem}[Closed in $Sym(\mathbb{A})$]
\label{thm:closed_group}
Let $\kappa$ be an infinite cardinal.
\newline For $G\leqslant Sym(\mathbb{A})$, $G$ is closed in the ${<}\kappa$-topology iff $G$ is an automorphism group of a ${<}\kappa$-structure.
The case $\kappa=\aleph_0$ is a standard result.
\end{theorem}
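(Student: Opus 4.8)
The plan is to prove both directions of the equivalence, generalising the classical proof for the topology of pointwise convergence. The statement claims that a subgroup $G\leqslant Sym(\mathbb{A})$ is closed in the ${<}\kappa$-topology if and only if $G=Aut(\mathcal{A})$ for some $\kappa$-structure $\mathcal{A}$ on $\mathbb{A}$.

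For the easy direction, I would show that every automorphism group is closed. Given a $\kappa$-structure $\mathcal{A}$, suppose $g\in Sym(\mathbb{A})$ lies in the closure of $Aut(\mathcal{A})$. I would argue that $g$ must preserve every relation: fixing a relation $R$ of arity $\alpha<\kappa$ and a tuple $\mathbf{a}\in\mathbb{A}^\alpha$, the set of bijections agreeing with $g$ on $\mathbf{a}$ and on $g^{-1}$ applied to the image is a basic open neighbourhood (since $\alpha<\kappa$, the relevant partial function has domain of size ${<}\kappa$), so it meets $Aut(\mathcal{A})$; any witnessing automorphism agrees with $g$ on the relevant coordinates and preserves $R$, forcing $g$ to preserve $R$ on that tuple. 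The same argument applied to $g^{-1}$ (which is also in the closure, as inversion is a homeomorphism) gives that $g$ reflects $R$, so $g\in Aut(\mathcal{A})$. Hence $Aut(\mathcal{A})$ is closed.

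For the harder converse, I would take a closed group $G$ and construct a $\kappa$-structure whose automorphism group is exactly $G$. The natural choice is to equip $\mathbb{A}$ with, for every arity $\alpha<\kappa$ and every tuple $\mathbf{a}\in\mathbb{A}^\alpha$, a relation symbol $R_{\mathbf{a}}$ interpreted as the $G$-orbit of $\mathbf{a}$, i.e. $R_{\mathbf{a}}:=G\cdot\{\mathbf{a}\}=\{\pi(\mathbf{a}):\pi\in G\}$. Call this structure $\mathcal{A}$. Since every $\pi\in G$ sends orbits to themselves, clearly $G\leqslant Aut(\mathcal{A})$. The crux is the reverse inclusion: I must show that any $g\in Aut(\mathcal{A})$ lies in $G$. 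An automorphism $g$ preserves each orbit relation, so $g(\mathbf{a})\in G\cdot\{\mathbf{a}\}$ for every tuple of length ${<}\kappa$; that is, $g$ agrees with some element of $G$ on every ${<}\kappa$-sized set of coordinates. This says precisely that every basic open neighbourhood of $g$ meets $G$, so $g$ is in the closure of $G$, and since $G$ is closed, $g\in G$.

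The main obstacle I anticipate is the final step of the converse, where I pass from ``$g$ agrees locally with elements of $G$'' to ``$g$ is in the topological closure of $G$.'' I must be careful that the relations $R_{\mathbf{a}}$ I introduce really are ${<}\kappa$-ary (arity equal to the length of $\mathbf{a}$, which is ${<}\kappa$) so that $\mathcal{A}$ is a legitimate $\kappa$-structure, and that preserving all of them is genuinely equivalent to meeting every basic open set in the ${<}\kappa$-topology, whose defining partial functions also have domain of size ${<}\kappa$. A subtle point worth flagging is that I should confirm the orbit relations suffice; one might worry about whether orbits of ${<}\kappa$-length tuples capture enough, but a neighbourhood $S_s$ is determined by a partial function $s$ of size ${<}\kappa$, equivalently by where a ${<}\kappa$-tuple is sent, so orbit-preservation of such tuples is exactly the condition needed. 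Once this correspondence between basic open sets and orbit relations is made precise, both inclusions follow cleanly, and specialising to $\kappa=\aleph_0$ recovers the standard theorem.
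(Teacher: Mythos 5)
Your proposal is correct and takes essentially the same approach as the paper: the converse direction builds the $\kappa$-structure whose relations are exactly the $G$-orbits of ${<}\kappa$-tuples and then identifies ``preserves all orbit relations'' with ``lies in every basic open neighbourhood of $G$,'' which is precisely the paper's limit-point argument. The only cosmetic difference is in the easy direction, where you show $Aut(\mathcal{A})$ contains its closure points (using that inversion is a homeomorphism to get reflection of relations, a detail the paper glosses over), whereas the paper shows the complement $Sym(\mathbb{A})\setminus Aut(\mathcal{A})$ is open; these are equivalent formulations of the same argument.
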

\begin{proof}
Let $G\leqslant Sym(\mathbb{A})$ be closed in the ${<}\kappa$-topology. Define:
    \begin{align*}
        P:=&\bigcup_{n{<}\kappa} \left\{E\subseteq\mathbb{A}^n:E\text{ is a }G\text{-orbit}\right\}\\
        \mathcal{A}:=&\left\langle\mathbb{A},P\right\rangle\\
    \end{align*}
    Clearly, $G\leqslant Aut(\mathcal{A})$ as $G$ fixes all the predicates.
    Let $\pi\in Aut(\mathcal{A})$.
    \begin{claim}
        $\pi$ is a limit point of $G$ and hence $\pi\in G$.
    \end{claim}
    Let $s:\mathbb{A}\rightharpoonup\mathbb{A}$ be a partial function with domain of cardinality $n=\left|s\right|<\kappa$ s.t. $\pi\in S_s$. Let $\mathbf{a}\in\mathbb{A}^n$ enumerate $dom\left(s\right)$. $\pi\left(\mathbf{a}\right)
        \in G\cdot\left\{\mathbf{a}\right\}$ as $\pi$ fixes the $G$-orbits, so there exists $\sigma\in G$ with $\sigma\left(\mathbf{a}\right)=\pi\left(\mathbf{a}\right)$. Hence, $\sigma\in S_s$. $G$ intersects all the basic open sets containing $\pi$ so, as $G$ is closed, $\pi\in G$.
    \begin{remark}
        If $G$ is oligomorphic, $\kappa=\aleph_0$ (and a countable union of countable sets is countable, which is a consequence of $AC$) then for all $n\in\mathbb{N}$, $\mathbb{A}^n$ has finitely many $G$-orbits, so $P$ is countable.
    \end{remark}
    We continue with the other direction of the proof of Theorem \ref{thm:closed_group}.
    \newline Let $\mathcal{A}$ be a $\kappa$-structure on $\mathbb{A}$ and let $\pi\in Sym(\mathbb{A})\setminus Aut(\mathcal{A})$. Then there exists a predicate, $P$, of $\mathcal{A}$ with $\mathbf{a}\in P$ s.t. $\pi(\mathbf{a})\notin P$. Let $S:=S_{\pi\restriction\mathbf{a}}$. Note that $\pi\in S$.
    For $\sigma\in S$, $\sigma(P)\not= P$, as $\pi(\mathbf{a})\in \sigma(P)$, so $\sigma\notin Aut(\mathcal{A})$. Now for $\pi\in Sym(\mathbb{A})\setminus Aut(\mathcal{A})$, there is an open set $S$ with $\pi\in S\subseteq Sym(\mathbb{A})\setminus Aut(\mathcal{A})$. Hence $Sym(\mathbb{A})\setminus Aut(\mathcal{A})$ is open and $Aut(\mathcal{A})$ is closed.
\end{proof}
\begin{remark}
    The structure constructed above is $\aleph_0$-homogeneous and can be constructed from an existing automorphism group. So every structure can be extended to a $\aleph_0$-homogeneous one, without changing the automorphism group.
\end{remark}
This suggests that we can construct a candidate automorphism group by taking the closure of a group; we must check this that will be a group.
\begin{claim}
    If $G\leqslant Sym(\mathbb{A})$ is a group, then so is $\overline{G}$, the closure of $G$ in $Sym(\mathbb{A})$.
\end{claim}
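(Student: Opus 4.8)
The plan is to work with the explicit characterisation of the closure afforded by the basic open sets of the ${<}\kappa$-product topology (Definition \ref{def:kappa_product_topology}). Since the basic open neighbourhoods of a point $\pi\in Sym(\mathbb{A})$ are exactly the sets $S_s$ with $s\subseteq\pi$ and $\left|dom(s)\right|{<}\kappa$, a permutation $\pi$ lies in $\overline{G}$ precisely when, for every $A\subseteq\mathbb{A}$ with $\left|A\right|{<}\kappa$, there is some $g\in G$ agreeing with $\pi$ on $A$ (that is, $g\restriction A=\pi\restriction A$). First I would record this equivalence, as every subsequent step is an instance of ``approximate $\pi$ on a small set by an element of $G$''. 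Membership of the identity is immediate since $id\in G\subseteq\overline{G}$, so it remains to verify closure under inverses and under composition.

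For closure under inverses, given $\pi\in\overline{G}$ and a set $A$ with $\left|A\right|{<}\kappa$, I would push $A$ through $\pi^{-1}$: set $B:=\pi^{-1}(A)$, which still has $\left|B\right|{<}\kappa$ since $\kappa$ is a cardinal and $\pi$ is a bijection. Choosing $g\in G$ with $g\restriction B=\pi\restriction B$, a direct check shows $g^{-1}\restriction A=\pi^{-1}\restriction A$: for $a\in A$ we have $\pi(\pi^{-1}(a))=a$ with $\pi^{-1}(a)\in B$, so $g(\pi^{-1}(a))=a$ and hence $g^{-1}(a)=\pi^{-1}(a)$. As $g^{-1}\in G$, this witnesses $\pi^{-1}\in\overline{G}$.

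For closure under composition, given $\pi,\sigma\in\overline{G}$ and $A$ with $\left|A\right|{<}\kappa$, I would first approximate $\sigma$ on $A$ by some $h\in G$, and then approximate $\pi$ on the (again ${<}\kappa$-sized) set $B:=\sigma(A)=h(A)$ by some $g\in G$. The element $gh\in G$ then agrees with $\pi\sigma$ on $A$, since for $a\in A$ we have $(gh)(a)=g(h(a))=g(\sigma(a))=\pi(\sigma(a))$, using $h(a)=\sigma(a)\in B$ and $g\restriction B=\pi\restriction B$. Hence $\pi\sigma\in\overline{G}$, and together with the previous step $\overline{G}$ is a subgroup of $Sym(\mathbb{A})$.

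The one place demanding care, and the crux of the argument, is the bookkeeping in the composition step: the set one must approximate $\pi$ on is not $A$ itself but its image $\sigma(A)$, and the argument only closes because this image has cardinality ${<}\kappa$, which is exactly where the choice of the ${<}\kappa$-topology is used. Abstractly this is the standard fact that the closure of a subgroup of a topological group is again a subgroup, once one checks that $Sym(\mathbb{A})$ with the ${<}\kappa$-topology is a topological group; the direct computation above both verifies continuity of the operations and avoids importing that machinery.
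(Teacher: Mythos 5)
Your proposal is correct and follows essentially the same route as the paper's proof: verify the subgroup axioms directly via the basic open sets $S_s$, handling inverses by approximating $\pi$ on $\pi^{-1}(A)$ and composition by approximating the outer permutation on the image of the small set, which is exactly where the ${<}\kappa$ cardinality bound is used in both arguments. The only cosmetic difference is that you state the ``agree on every small set'' characterisation of $\overline{G}$ up front and remark on the abstract topological-group fact, neither of which changes the substance.
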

\begin{proof}
    First note that $G\subseteq\overline{G}\subseteq Sym(\mathbb{A})$ so $\overline{G}$ contains the identity and consists of bijections. We show that $\overline{G}$ is closed under composition and inverses. 
    \newline Let $\pi,\sigma\in\overline{G}$. Let $S\subseteq_{{<}\kappa}\mathbb{A}$ so, by definition of the topology, there are $\pi',\sigma'\in G$ s.t. for all $a\in S$, $\pi(a)=\pi'(a)$ and $\sigma(\pi(a))=\sigma'(\pi(a))$. $\sigma'\circ\pi'\in G$ and $\left(\sigma'\circ\pi'\right)\restriction S=\left(\sigma\circ\pi\right)$. So $G$ intersects all basic open sets containing $\left(\sigma\circ\pi\right)$ hence $\left(\sigma\circ\pi\right)\in \overline{G}$.
    \newline Let $\pi\in G$. Let $S\subseteq_{{<}\kappa}\mathbb{A}$ so there is $\pi'\in G$ with $\pi'\restriction \pi^{-1}(S)=\pi\restriction \pi^{-1}(S)$. $\pi'^{-1}\in G$ and $\pi'^{-1}\restriction S =\pi^{-1}\restriction S$. So $G$ intersects all basic open sets containing $\pi^{-1}$ hence $\pi^{-1}\in\overline{G}$.
\end{proof}

Now, $\overline{G}$ is the smallest automorphism group (of a $\kappa$-structure) containing $G$.
Recall from Section \ref{sec:definability} that working with automorphism groups is preferable, since it allows us to work with types. This motivates our goal, stated at the start of this section, to create an automorphism group that generates the same finite-support permutation model as $G$. We have a stronger result: the finite-supports themselves are preserved. Again, this generalises to infinite cardinalities $\kappa$.

\begin{theorem}[Supports are Preserved under Closure]
    \label{thm:support_closure}
    Let $\kappa$ be an infinite cardinal and consider the ${<}\kappa$-topology on $Sym(\mathbb{A})$.
    \newline Let  $G\leqslant Sym(\mathbb{A})$ and let $x$ be an element of the universe then for all $\mathbf{a}\in\mathbb{A}^{{<}\kappa}$:
    \begin{align*}
        &\text{$x$ is hereditarily ${<}\kappa$-supported and has support $\mathbf{a}$ with respect to $G$}\\
        \text{iff }&\text{$x$ is hereditarily ${<}\kappa$-supported and has support $\mathbf{a}$ with respect to $\overline{G}$}
    \end{align*}
\end{theorem}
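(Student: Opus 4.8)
The plan is to prove the biconditional by $\in$-induction on $x$, showing for every $\mathbf{a}\in\mathbb{A}^{{<}\kappa}$ that $x$ is hereditarily ${<}\kappa$-supported with support $\mathbf{a}$ with respect to $G$ exactly when the same holds with respect to $\overline{G}$. The backward direction ($\overline{G}\Rightarrow G$) is the routine one: since $G\subseteq\overline{G}$ we have $G_{\mathbf{a}}\subseteq\overline{G}_{\mathbf{a}}$, so if every element of $\overline{G}_{\mathbf{a}}$ fixes $x$ then a fortiori every element of $G_{\mathbf{a}}$ does, and heredity transfers by applying the inductive hypothesis to each $y\in TC(\{x\})$. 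The substance lies in the forward direction.

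First I would record that fixing a tuple is a clopen condition: for $\mathbf{a}\in\mathbb{A}^{{<}\kappa}$ the set $\{\pi:\pi(\mathbf{a})=\mathbf{a}\}$ is the basic open set $S_{id\restriction\mathbf{a}}$ and is simultaneously closed, whence $\overline{G}_{\mathbf{a}}=\overline{G_{\mathbf{a}}}$. The base cases follow at once: $\emptyset$ is equivariant for both groups, and for an atom $a$ the stabiliser $Sym(\mathbb{A})_a$ is clopen, so $G_{\mathbf{a}}$ fixes $a$ iff its closure $\overline{G_{\mathbf{a}}}=\overline{G}_{\mathbf{a}}$ does.

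For the inductive step, suppose $\mathbf{a}$ supports a set $x$ with respect to $G$ and $x$ is hereditarily ${<}\kappa$-supported with respect to $G$; I would show $\mathbf{a}$ supports $x$ with respect to $\overline{G}$. Take $\pi\in\overline{G}$ with $\pi(\mathbf{a})=\mathbf{a}$ and fix $y\in x$. Since $y\in TC(\{x\})$ it has a support $\mathbf{b}\in\mathbb{A}^{{<}\kappa}$ with respect to $G$, and by the inductive hypothesis $\mathbf{b}$ supports $y$ with respect to $\overline{G}$. As $\lvert\mathbf{a}\cup\mathbf{b}\rvert{<}\kappa$, the set $S_{\pi\restriction(\mathbf{a}\cup\mathbf{b})}$ is a basic open neighbourhood of $\pi$, so $\pi\in\overline{G}$ yields $\sigma\in G$ agreeing with $\pi$ on $\mathbf{a}\cup\mathbf{b}$. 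Then $\sigma(\mathbf{a})=\pi(\mathbf{a})=\mathbf{a}$ gives $\sigma\in G_{\mathbf{a}}$, hence $\sigma(x)=x$ and $\sigma(y)\in x$; moreover $\sigma^{-1}\pi\in\overline{G}$ fixes $\mathbf{b}$ pointwise, so $\sigma^{-1}\pi(y)=y$, i.e. $\pi(y)=\sigma(y)\in x$. Thus $\pi(x)\subseteq x$, and applying the same argument to $\pi^{-1}$ gives $x\subseteq\pi(x)$, so $\pi(x)=x$. Heredity with respect to $\overline{G}$ then follows by invoking the inductive hypothesis on each element of $TC(\{x\})$.

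The main obstacle is that the stabiliser $Sym(\mathbb{A})_x$ of a general set $x$ need not be closed, so one cannot simply pass fixing by $G_{\mathbf{a}}$ to fixing by $\overline{G_{\mathbf{a}}}$. This is precisely what the hereditary hypothesis circumvents: once $\mathbf{b}$ is known (via the inductive hypothesis) to support $y$ with respect to $\overline{G}$, the value $\pi(y)$ is pinned down by $\pi$ on the small set $\mathbf{a}\cup\mathbf{b}$, reducing the global closure problem to a ${<}\kappa$-approximation at each rank.
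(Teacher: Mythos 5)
Your proof is correct and takes essentially the same approach as the paper: an $\in$-induction whose key step approximates $\pi\in\left(\overline{G}\right)_{\mathbf{a}}$ by some $\sigma\in G$ agreeing with $\pi$ on $\mathbf{a}\cup\mathbf{b}$ (where $\mathbf{b}$ supports $y\in x$), so that $\sigma^{-1}\circ\pi$ fixes $y$, giving $\pi(y)=\sigma(y)\in x$, with the symmetric argument for $\pi^{-1}$ yielding $\pi(x)=x$. The only differences are cosmetic: you package the base case as a clopen-stabiliser observation ($\overline{G}_{\mathbf{a}}=\overline{G_{\mathbf{a}}}$) where the paper reruns the approximation argument directly, and you omit the paper's parenthetical remark that $\left|\mathbf{a}\cup\mathbf{b}\right|<\kappa$ relies on $AC$ when $\kappa\not=\aleph_0$ (though not when $\kappa=\aleph_0$).
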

\begin{proof}
    $\left(\Leftarrow\right): G\leqslant\overline{G}$ so the claim is trivial.
    \newline $\left(\Rightarrow\right):$ We prove by $\in$-induction.
    \newline Let $a\in\mathbb{A}$ and suppose that $\mathbf{a}\in\mathbb{A}^{{<}\kappa}$ supports $a$ with respect to $G$. For $\pi\in \left(\overline{G}\right)_\mathbf{a}$ there is $\sigma\in G$ s.t. $\sigma(\mathbf{a})=\mathbf{a}$ and $\sigma(a)=\pi(a)$, by the topology. $\sigma(a)=a$ as $\mathbf{a}$ supports $a$ with respect to $G$ so $\pi(a)=a$. So $\mathbf{a}$ supports $a$ with respect to $\overline{G}$.
    \newline Suppose that $x$ is a set s.t. for all $y\in TC(x)$, for $\mathbf{a}\in\mathbb{A}^{<\kappa}$ if $y$ is hereditarily ${<\kappa}$-supported and has support $\mathbf{a}$ with respect to $G$ then the same is true with respect to $\overline{G}$. Suppose that $x$ is hereditarily ${<\kappa}$-supported and has support $\mathbf{a}\in\mathbb{A}^{<\kappa}$ with respect to $G$. By the inductive assumption, the elements of $TC(x)$ are ${<}\kappa$-supported with respect to $\overline{G}$  so it suffices to show that $x$ is supported by $\mathbf{a}$ with respect to $\overline{G}$. Let $\pi\in \left(\overline{G}\right)_{\mathbf{a}}$ and $y\in x$. $y$ is supported by some $\mathbf{b}\in\mathbb{A}^{{<}\kappa}$ with respect to $\overline{G}$. By the topology, there exists $\sigma\in G$ s.t. $\sigma(\mathbf{a})=\pi(\mathbf{a})$ and $\sigma(\mathbf{b})=\pi(\mathbf{b})$, since $\left|\mathbf{a}\right|+\left|\mathbf{b}\right|<\kappa+\kappa=\kappa$ (note that this requires $AC$ for $\kappa\not=\aleph_0$ but does not require $AC$ for $\kappa=\aleph_0$). $\sigma\in \overline{G}$ so $\sigma^{-1}\circ \pi \in \overline{G}_{\mathbf{a}\cup\mathbf{b}}$ hence $\left(\sigma^{-1}\circ\pi\right)(y)=y$. It follows that $\pi(y)=\sigma(y)\in x$. For all $y\in x$, $\pi(y)\in x$ so $\pi(x)\subseteq x$. We also know that $\pi^{-1}\in\left(\overline{G}\right)_{\mathbf{a}}$ so $\pi^{-1}(x)\subseteq x$ so $x\subseteq\pi(x)$ thus $\pi(x)=x$. So $x$ is supported by $\mathbf{a}$ with respect to $\overline{G}$.
    \newline Now by $\in$-induction, the claim holds for all elements of the universe.
\end{proof}

\begin{corollary}
\label{cor:permutation_model_group_closure}
    If:
    \begin{itemize}
        \item $\left\langle\mathcal{M},\mathbb{A},\in\right\rangle$ is a model of $ZFA$;
        \item $\kappa$ is an infinite cardinal; and
        \item $G\leqslant Sym(\mathbb{A})$;
    \end{itemize}
    then, considering the ${<}\kappa$-topology,
    \begin{equation}
        PM(\mathcal{M},G,\mathcal{F}^G_{{<}\kappa})=PM(\mathcal{M},\overline{G},\mathcal{F}^{\overline{G}}_{{<}\kappa})
    \end{equation}
\end{corollary}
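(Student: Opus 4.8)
The plan is to reduce the statement entirely to Theorem \ref{thm:support_closure}, since that theorem already does the substantive work of matching supports between $G$ and $\overline{G}$. First I would unpack the definition of the permutation models in terms of supports. By Definition \ref{def:kappa-support_filter}, $G_x \in \mathcal{F}^G_{{<}\kappa}$ precisely when there is some $\mathbf{a}\in\mathbb{A}^{{<}\kappa}$ with $G_{\mathbf{a}}\leqslant G_x$, that is, when $\mathbf{a}$ supports $x$ with respect to $G$. Hence $x$ is symmetric with respect to $\mathcal{F}^G_{{<}\kappa}$ exactly when $x$ is ${<}\kappa$-supported with respect to $G$, and so $x$ is hereditarily symmetric with respect to $\mathcal{F}^G_{{<}\kappa}$ exactly when $x$ is hereditarily ${<}\kappa$-supported with respect to $G$. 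The same unpacking applies verbatim to $\overline{G}$ and $\mathcal{F}^{\overline{G}}_{{<}\kappa}$.

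Before comparing the two models, I would check that the right-hand side is well-defined: the closure $\overline{G}$ is defined by a formula over $G$ and the ${<}\kappa$-topology, so $\overline{G}\in\mathcal{M}$, and by the Claim immediately preceding Theorem \ref{thm:support_closure} it is a subgroup of $Sym(\mathbb{A})$; its ${<}\kappa$-support filter $\mathcal{F}^{\overline{G}}_{{<}\kappa}$ is then a normal filter in $\mathcal{M}$ by the construction of Definition \ref{def:kappa-support_filter}. Thus $PM(\mathcal{M},\overline{G},\mathcal{F}^{\overline{G}}_{{<}\kappa})$ is a legitimate instance of Definition \ref{def:permutation_models}.

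The equality of the two classes then follows directly. Fixing an element $x$ of the universe, Theorem \ref{thm:support_closure} states that for every $\mathbf{a}\in\mathbb{A}^{{<}\kappa}$, $x$ is hereditarily ${<}\kappa$-supported with support $\mathbf{a}$ with respect to $G$ if and only if the same holds with respect to $\overline{G}$. In particular, quantifying over $\mathbf{a}$, $x$ is hereditarily ${<}\kappa$-supported with respect to $G$ if and only if it is hereditarily ${<}\kappa$-supported with respect to $\overline{G}$. Combined with the reformulation of the first paragraph, this says $x\in PM(\mathcal{M},G,\mathcal{F}^G_{{<}\kappa})$ iff $x\in PM(\mathcal{M},\overline{G},\mathcal{F}^{\overline{G}}_{{<}\kappa})$, which is the desired equality of classes.

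I do not expect a genuine obstacle here, as Theorem \ref{thm:support_closure} is exactly calibrated to this corollary; the only point demanding care is the bookkeeping that ``hereditarily symmetric with respect to the ${<}\kappa$-support filter'' coincides with ``hereditarily ${<}\kappa$-supported''. This relies on $\mathcal{F}^G_{{<}\kappa}$ being generated by the stabilisers $G_{\mathbf{a}}$, so that symmetry with respect to the filter is nothing more than possessing a support of size ${<}\kappa$, with no gap introduced by the hereditary requirement, since the theorem's conclusion is itself phrased hereditarily and preserves the specific witnessing support.
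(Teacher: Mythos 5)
Your proposal is correct and follows essentially the same route as the paper: the paper's proof is precisely the observation that $PM(\mathcal{M},G,\mathcal{F}^G_{{<}\kappa})$ is the class of hereditarily ${<}\kappa$-supported elements with respect to $G$, that this class is unchanged when $G$ is replaced by $\overline{G}$ by Theorem \ref{thm:support_closure}, and that the result equals $PM(\mathcal{M},\overline{G},\mathcal{F}^{\overline{G}}_{{<}\kappa})$. Your additional bookkeeping (unpacking symmetry with respect to $\mathcal{F}^G_{{<}\kappa}$ as possession of a ${<}\kappa$-support, and verifying $\overline{G}$ is a legitimate group in $\mathcal{M}$) is simply a more explicit rendering of what the paper leaves implicit in calling the corollary ``immediate''.
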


\begin{proof}
This is immediate from Theorem \ref{thm:support_closure}.
    \begin{align*}
        PM(\mathcal{M},G,\mathcal{F}^G_{{<}\kappa})=&\left\{x\in\mathcal{M}:x\text{ is hereditarily ${<}\kappa$-supported w.r.t. }G\right\}\\
        =&\left\{x\in\mathcal{M}:x\text{ is hereditarily ${<}\kappa$-supported w.r.t. }\overline{G}\right\}\\
        =&PM(\mathcal{M},\overline{G},\mathcal{F}^{\overline{G}}_{{<}\kappa})
    \end{align*}
\end{proof}

If we combine the above work, we now know that any ${<}\kappa$-support permutation model is induced by some $\kappa$-structure.

\begin{corollary}[Adequacy of Automorphism Groups]
    \label{thm:adequacy_automorphism} If $\mathcal{N}$ is a ${<}\kappa$-support permutation model of $\mathcal{M}$, then it is induced by the automorphism group of some $\kappa$-structure.
\end{corollary}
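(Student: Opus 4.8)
The plan is to assemble the three preceding results, as the substantive work has already been done. First I would unpack the hypothesis: by the definition of the ${<}\kappa$-support filter (Definition \ref{def:kappa-support_filter}), saying that $\mathcal{N}$ is a ${<}\kappa$-support permutation model of $\mathcal{M}$ means precisely that $\mathcal{N}=PM(\mathcal{M},G,\mathcal{F}^G_{{<}\kappa})$ for some subgroup $G\leqslant Sym(\mathbb{A})$ with $G\in\mathcal{M}$.

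Next I would pass to the closure $\overline{G}$ of $G$ in the ${<}\kappa$-topology. The Claim immediately preceding guarantees that $\overline{G}$ is again a subgroup of $Sym(\mathbb{A})$, and since the topology and the closure operator are definable from $G$, we have $\overline{G}\in\mathcal{M}$; hence $PM(\mathcal{M},\overline{G},\mathcal{F}^{\overline{G}}_{{<}\kappa})$ is a legitimate permutation model formed inside $\mathcal{M}$. Corollary \ref{cor:permutation_model_group_closure} then gives
\begin{equation*}
    \mathcal{N}=PM(\mathcal{M},G,\mathcal{F}^G_{{<}\kappa})=PM(\mathcal{M},\overline{G},\mathcal{F}^{\overline{G}}_{{<}\kappa}).
\end{equation*}

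Finally I would invoke the forward direction of Theorem \ref{thm:closed_group}: by construction $\overline{G}$ is closed in the ${<}\kappa$-topology, so $\overline{G}=Aut(\mathcal{A})$ for some $\kappa$-structure $\mathcal{A}$ (concretely, the structure $\langle\mathbb{A},P\rangle$ whose predicates $P$ are the $\overline{G}$-orbits of arity ${<}\kappa$, exactly as in the proof of Theorem \ref{thm:closed_group}). Substituting yields $\mathcal{N}=PM(\mathcal{M},Aut(\mathcal{A}),\mathcal{F}^{Aut(\mathcal{A})}_{{<}\kappa})$, so $\mathcal{N}$ is induced by the automorphism group of a $\kappa$-structure, as required.

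I expect no genuine obstacle here, since the content was discharged by Theorem \ref{thm:support_closure} (support preservation under closure) and Theorem \ref{thm:closed_group} (closed groups are automorphism groups); this corollary is their assembly. The only points needing care are bookkeeping: verifying that $\overline{G}$ and the resulting structure $\mathcal{A}$ lie in $\mathcal{M}$ so the permutation model is formed relative to the correct universe, and (for $\kappa\neq\aleph_0$) remaining mindful of the reliance on \gls{ac} flagged within the proof of Theorem \ref{thm:support_closure}.
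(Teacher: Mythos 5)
Your proposal is correct and follows essentially the same route as the paper: express $\mathcal{N}$ as $PM(\mathcal{M},G,\mathcal{F}^G_{{<}\kappa})$, replace $G$ by its closure via Corollary \ref{cor:permutation_model_group_closure}, and identify $\overline{G}$ as $Aut(\mathcal{A})$ for the orbit structure via Theorem \ref{thm:closed_group}. The only cosmetic difference is that the paper writes the predicates as $G$-orbits rather than $\overline{G}$-orbits, which agree on tuples of arity ${<}\kappa$ by density of $G$ in $\overline{G}$.
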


\begin{proof}
Say that
\begin{equation*}
    \mathcal{N}=PM(\mathcal{M},G,{\mathcal{F}}^{G}_{{<}\kappa})
\end{equation*}
By Corollary \ref{cor:permutation_model_group_closure},
\begin{equation*}
    \mathcal{N}=PM(\mathcal{M},\overline{G},{\mathcal{F}}^{\overline{G}}_{{<}\kappa})
\end{equation*}
By Theorem \ref{thm:closed_group},
\begin{equation*}
    \overline{G}=Aut(\mathbb{A},\bigcup_{n{<}\kappa}\left\{E\subseteq\mathbb{A}^n:E\text{ is a }G\text{-orbit}\right\})
\end{equation*}
    Note that if $G$ is oligomorphic, $\kappa=\aleph_0$, and a countable union of finite sets is countable, then the structure has countable language.
\end{proof}

\begin{example}
\label{examples:perm_group}
Let
\begin{equation*}
    Perm(\mathbb{A}):=\left\{\pi\in Sym(\mathbb{A})|\pi\text{ fixes co-finitely many elements of }\mathbb{A}\right\}
\end{equation*}
It is the case that $Perm(\mathbb{A})\leqslant Sym(\mathbb{A})$ and that $\overline{Perm(\mathbb{A})}=Sym(\mathbb{A})$ in the ${<}\aleph_0$-topology on $Sym(\mathbb{A})$ so that they have the same finite-support permutation models. This justifies Pitts \cite{pitts_nominal_2013} use of $Perm(\mathbb{A})$ instead of $Sym(\mathbb{A})$.
\end{example}
\begin{example}
    \label{examples:dense_subgroup_of_Q}
    Let
    \begin{equation*}
        H:=\left\{f\in\mathbb{Q^Q}|f \text{ is a piecewise linear increasing bijection}\right\}
    \end{equation*}
    It is the case that $H\leqslant Aut(\left\langle\mathcal{\mathbb{Q},\leq}\right\rangle)$ and that $\overline{H}=\left\langle\mathcal{\mathbb{Q},\leq}\right\rangle$ in the ${<}\aleph_0$-topology on $Sym(\mathbb{A})$.
\end{example}
The main works on orbit-finite sets in computer science, \cite{pitts_nominal_2013} and \cite{bojanczyk_slightly_nodate}, specialise to the case of finite-support permutation models induced by structures on countable atoms. The finite-support condition is nice, it gives some hope for definability and ensures sufficient dependence between the group and the permutation model (otherwise we could use the symmetry group and have the filter encode the subgroup information), but the additional condition that the group is an automorphism group is more unnatural and suggests a possible generalisation. Corollary \ref{thm:adequacy_automorphism} shows that, specialising to finite-support filters, inducing permutation models from structures is the correct generalisation.

\subsection{Structures With the Same Permutation Models}

It is reasonable to ask when two structures induce the same finite-support permutation model. We generalise to the case of a set of supports $S$, which specialises to the case of $S=\mathbb{A}$ for finite-supports.
\begin{definition}
\label{def:set_of_supports}
    Recall $\#$ from Example \ref{examples:filter_basis_from_supports}. 
    For a set $S$ and $G\leqslant Sym(\mathbb{A})$, define
    \begin{equation*}
        \mathcal{F}^G_S:=\left\langle \#\left\{G_x:x\in S\right\}\right\rangle
    \end{equation*}
    if $\mathcal{F}^G_S$ satisfies the definition of a normal filter from Definition \ref{def:normal_filter}. 
    \newline Fix $S$. If $\mathcal{F}^G_S$ is defined for all $G\leqslant Sym(\mathbb{A})$, we say that $S$ is a set of supports.
\end{definition}
\begin{example}
    $\mathbb{A}^{{<}\kappa}$ is a set of supports for all cardinals $\kappa$.
\end{example}
\begin{remark}
    $\mathcal{F}^G_S=\mathcal{F}^G_{S^{<\omega}}$ as $G_\mathbf{a}=\bigcap_{a\in \mathbf{a}}G_a$.
\end{remark}

\begin{remark}
    \begin{itemize}
        \item $\mathcal{F}^G_\mathbb{A}=\mathcal{F}^G_{\mathbb{A}^{<\omega}}=\mathcal{F}^G_{fin}$.
        \item For $\kappa$ a cardinal, $\mathcal{F}^G_{\mathbb{A}^{{<}\kappa}}=\mathcal{F}^G_{{<}\kappa}$.
    \end{itemize}
\end{remark}

\begin{theorem}
    \label{thm:different_structure_model}
    Let $\left\langle\mathcal{M},\mathbb{A},\in\right\rangle\models ZFA$ and $S\in\mathcal{M}$ be a set of supports.
    \newline If:
    \begin{itemize}
        \item $\mathcal{A},\mathcal{A'}$ are both first-order structures on $\mathbb{A}$ with $\mathcal{A}$ having pure language;
        \item $\mathcal{N}:=PM(\mathcal{M},Aut(\mathcal{A}),\mathcal{F}^{Aut(\mathcal{A})}_S)$;
        \item $\mathcal{N}':=PM(\mathcal{M},Aut(\mathcal{A}'),\mathcal{F}^{Aut(\mathcal{A}')}_S)$; and
        \item $\mathcal{A}\in\mathcal{N}'$;
    \end{itemize}
    then $\mathcal{N}\subseteq\mathcal{N}'$.
\end{theorem}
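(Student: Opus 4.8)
The plan is to prove the inclusion by $\in$-induction, reducing everything to one group-theoretic observation about the setwise stabiliser of $\mathcal{A}$. The crucial preliminary step is to identify this stabiliser inside $Sym(\mathbb{A})$. Because $\mathcal{A}$ has pure language, it is coded as $\langle\mathbb{A},\langle P_i:i\in I\rangle\rangle$ for a pure index set $I$, and any $\pi\in Sym(\mathbb{A})$ fixes $I$ together with the coding machinery, so $\pi(\mathcal{A})=\langle\mathbb{A},\langle\pi(P_i):i\in I\rangle\rangle$. Hence $\pi(\mathcal{A})=\mathcal{A}$ iff $\pi(P_i)=P_i$ for every $i$, i.e. iff $\pi\in Aut(\mathcal{A})$. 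First I would record this as $Sym(\mathbb{A})_{\mathcal{A}}=Aut(\mathcal{A})$, from which $Aut(\mathcal{A}')_{\mathcal{A}}=Aut(\mathcal{A}')\cap Aut(\mathcal{A})$ follows immediately, since $Aut(\mathcal{A}')_{\mathcal{A}}=Aut(\mathcal{A}')\cap Sym(\mathbb{A})_{\mathcal{A}}$.

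Next I would unpack the hypothesis $\mathcal{A}\in\mathcal{N}'$. As $\mathcal{N}'$ uses the filter $\mathcal{F}^{Aut(\mathcal{A}')}_S$, symmetry of $\mathcal{A}$ supplies finitely many $s_1,\dots,s_m\in S$ with $\bigcap_j Aut(\mathcal{A}')_{s_j}\leqslant Aut(\mathcal{A}')_{\mathcal{A}}=Aut(\mathcal{A}')\cap Aut(\mathcal{A})$. In words: every $\mathcal{A}'$-automorphism fixing the tuple $\mathbf{s}=(s_1,\dots,s_m)$ is automatically an $\mathcal{A}$-automorphism. This is the bridge that lets me transport supports from the $\mathcal{A}$-world into the $\mathcal{A}'$-world.

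Then comes the induction: I would show by $\in$-induction that $x\in\mathcal{N}$ implies $x\in\mathcal{N}'$. Atoms are handled by the normal-filter axiom $G_a\in\mathcal{F}$ (so each atom lies in every permutation model, $\mathcal{F}^{Aut(\mathcal{A}')}_S$ being a normal filter as $S$ is a set of supports). For a set $x\in\mathcal{N}$, every element of $x$ lies in $\mathcal{N}$ and hence, by the inductive hypothesis, in $\mathcal{N}'$, so it only remains to check that $x$ is symmetric with respect to $\mathcal{F}^{Aut(\mathcal{A}')}_S$. Symmetry of $x$ in $\mathcal{N}$ gives $a_1,\dots,a_n\in S$ with $\bigcap_i Aut(\mathcal{A})_{a_i}\leqslant Aut(\mathcal{A})_x$, and I claim the concatenated support $\mathbf{s}^\frown\mathbf{a}$ works for $\mathcal{A}'$: if $\pi\in Aut(\mathcal{A}')$ fixes all of $s_1,\dots,s_m,a_1,\dots,a_n$, then fixing $\mathbf{s}$ forces $\pi\in Aut(\mathcal{A})$ by the bridge, and then fixing $\mathbf{a}$ forces $\pi(x)=x$. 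Since the witnessing subgroup $\bigcap_j Aut(\mathcal{A}')_{s_j}\cap\bigcap_i Aut(\mathcal{A}')_{a_i}$ is again a finite intersection of stabilisers of elements of $S$, it lies in the filter base, so $Aut(\mathcal{A}')_x\in\mathcal{F}^{Aut(\mathcal{A}')}_S$ and $x$ is symmetric, closing the induction.

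The main obstacle is the pure-language identity $Sym(\mathbb{A})_{\mathcal{A}}=Aut(\mathcal{A})$: everything downstream is routine once it is in place, and it is exactly where the hypothesis that $\mathcal{A}$ has pure language is essential. Without purity, $\pi$ could permute predicates of equal arity, so the setwise stabiliser would properly contain $Aut(\mathcal{A})$ and the implication $\pi\in Aut(\mathcal{A}')_{\mathbf{s}}\Rightarrow\pi\in Aut(\mathcal{A})$ would fail.
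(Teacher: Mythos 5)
Your proposal is correct and follows essentially the same route as the paper: both hinge on the observation that, because $\mathcal{A}$ has pure language, any permutation fixing $\mathcal{A}$ (equivalently, fixing its support $\mathbf{s}$ within $Aut(\mathcal{A}')$) must fix its predicates and hence lie in $Aut(\mathcal{A})$, after which supports transfer by concatenation $\mathbf{s}^\frown\mathbf{a}$. Your explicit formulation $Sym(\mathbb{A})_{\mathcal{A}}=Aut(\mathcal{A})$ and the $\in$-induction are just slightly more spelled-out versions of what the paper does directly on transitive closures.
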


\begin{proof}
    $\mathcal{A}\in\mathcal{N}'$ so let $\mathbf{a}\in S^{<\omega}$ support $\mathcal{A}$ with respect to $\mathcal{A}'$ (Definition \ref{def:support_structure}).
    \newline $\mathcal{A}$ has language in bijection with a pure set, so the language of $\mathcal{A}$ is equivariant with respect to $\mathcal{A}$. $\mathcal{A}$ can be seen as a function from its language to its predicates. Both $\mathcal{A}$ and the elements of its language are fixed by $Aut(\mathcal{A}')_{\mathbf{a}}$, so the range of $\mathcal{A}$, its predicates, must be fixed by $Aut(\mathcal{A}')_{\mathbf{a}}$.
    \newline Let $x\in\mathcal{M}$, if $x$ is supported by $\mathbf{b}\in S^{<\omega}$ with respect to $\mathcal{A}$, then $x$ is supported by $\mathbf{a}^\frown \mathbf{b}$ with respect to $\mathcal{A}'$. This is since for $\pi\in Aut(\mathcal{A}')_{\mathbf{a}^\frown \mathbf{b}}$, $\pi$ fixes the predicates of $\mathcal{A}$ so $\pi\in Aut(\mathcal{A})_{\mathbf{b}}$ so $\pi(x)=x$.
    \newline So $\mathcal{N}\subseteq\mathcal{N}'$.
\end{proof}

\begin{corollary}
    \label{thm:structure_perm_model_eq}
    Let $\left\langle\mathcal{M},\mathbb{A},\in\right\rangle\models ZFA$ and $S\in\mathcal{M}$ be a set of supports.
    \newline If:
    \begin{itemize}
        \item $\mathcal{A},\mathcal{A'}$ are both first-order structures on $\mathbb{A}$ with pure languages;
        \item $\mathcal{N}:=PM(\mathcal{M},Aut(\mathcal{A}),\mathcal{F}^{Aut(\mathcal{A})}_S)$; and
        \item $\mathcal{N}':=PM(\mathcal{M},Aut\left(\mathcal{A}'\right),\mathcal{F}^{Aut\left(\mathcal{A}'\right)}_S)$;
    \end{itemize}
    then
    \begin{align*}
        &\mathcal{A}\in\mathcal{N}'\text{ and }\mathcal{A}'\in\mathcal{N}\\
        \text{iff }&\mathcal{N}=\mathcal{N}'
    \end{align*}
\end{corollary}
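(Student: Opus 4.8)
The plan is to obtain this corollary almost directly from Theorem \ref{thm:different_structure_model}, handling the two implications separately, with the only genuinely new ingredient being a self-membership lemma for the backward direction.

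For the forward direction I would assume $\mathcal{A} \in \mathcal{N}'$ and $\mathcal{A}' \in \mathcal{N}$. Since $\mathcal{A}$ has pure language and $\mathcal{A} \in \mathcal{N}'$, Theorem \ref{thm:different_structure_model} gives $\mathcal{N} \subseteq \mathcal{N}'$. Because in this corollary \emph{both} structures are assumed to have pure language, I may swap the roles of $\mathcal{A}$ and $\mathcal{A}'$ and apply the same theorem with the hypothesis $\mathcal{A}' \in \mathcal{N}$, obtaining $\mathcal{N}' \subseteq \mathcal{N}$. Combining the two inclusions yields $\mathcal{N} = \mathcal{N}'$, which is all that is needed here.

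For the backward direction I would assume $\mathcal{N} = \mathcal{N}'$ and reduce the goal to the single auxiliary fact that a first-order structure with pure language always lies in its own finite-support permutation model, i.e. $\mathcal{A} \in \mathcal{N}$ and $\mathcal{A}' \in \mathcal{N}'$. Granting this, the equality $\mathcal{N} = \mathcal{N}'$ immediately delivers $\mathcal{A} \in \mathcal{N}'$ and $\mathcal{A}' \in \mathcal{N}$, completing the proof. To establish $\mathcal{A} \in \mathcal{N}$ I would verify that $\mathcal{A}$ is hereditarily symmetric with respect to $\mathcal{F}^{Aut(\mathcal{A})}_S$ by inspecting $TC(\{\mathcal{A}\})$. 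Viewing $\mathcal{A}$ as the interpretation function from its pure language to its predicates, every $\pi \in Aut(\mathcal{A})$ fixes each language symbol (it is pure, hence equivariant) and preserves each predicate $P_i$ by the definition of an automorphism; thus $\mathcal{A}$, the domain $\mathbb{A}$, and each $P_i$ are equivariant, so their stabilisers equal all of $Aut(\mathcal{A})$ and lie in the filter. The remaining members of the transitive closure are pure sets (equivariant), atoms $a$ whose stabilisers $G_a$ are in $\mathcal{F}$ by the normal-filter condition of Definition \ref{def:normal_filter}, and the finite tuples of atoms occurring inside the predicates, whose stabilisers contain a finite intersection $\bigcap_{a \in \mathbf{a}} G_a$ and so belong to $\mathcal{F}$ by closure under finite intersection. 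Hence every element of $TC(\{\mathcal{A}\})$ is symmetric, so $\mathcal{A} \in \mathcal{N}$, and the identical argument gives $\mathcal{A}' \in \mathcal{N}'$.

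The step I expect to require the most care is this self-membership claim, and in particular the role played by the purity of the language: it is exactly purity that forces each automorphism to fix every language symbol, so that the predicates are individually equivariant (not merely permuted among themselves), which is what makes each predicate symmetric. The filter-theoretic bookkeeping for the atoms and their finite tuples is routine given the normal-filter axioms, but I would be careful to state explicitly which axioms ($G_a \in \mathcal{F}$ for atoms and closure under finite intersection) are invoked, since the corollary is stated for an arbitrary set of supports $S$ rather than only for $S = \mathbb{A}$.
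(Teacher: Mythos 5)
Your proposal is correct and follows essentially the same route as the paper: the forward direction applies Theorem \ref{thm:different_structure_model} twice by symmetry, and the backward direction reduces to the fact that a structure with pure language lies in its own permutation model. Your verification of hereditary symmetry via $TC(\{\mathcal{A}\})$ (predicates, language symbols, atoms, and tuples, using the normal-filter axioms) is simply a more explicit spelling-out of the step the paper compresses into ``$\mathcal{A}''$ is equivariant with respect to $\mathcal{A}''$''.
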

\begin{proof}
    Suppose that $\mathcal{A}\in\mathcal{N}'\text{ and }\mathcal{A}'\in\mathcal{N}$, then by Theorem \ref{thm:different_structure_model} we get that $\mathcal{N}\subseteq\mathcal{N}'$ and $\mathcal{N}'\subseteq\mathcal{N}$ so that $\mathcal{N}=\mathcal{N}'$.
    \newline Suppose that $\mathcal{N}=\mathcal{N}'$. For all structures $\mathcal{A}''$ with a pure language, $\mathcal{A}''$ is equivariant with respect to $\mathcal{A}''$ as $Aut(\mathcal{A}'')$ fixes the language and every predicate of $\mathcal{A}''$. This means that $\mathcal{A}\in\mathcal{N}=\mathcal{N}'$ and $\mathcal{A}'\in\mathcal{N}'=\mathcal{N}$.
\end{proof}
\begin{example}
    Work in $\mathcal{V}^*(\mathbb{Q})$ (Definition \ref{def:class_adding_atoms}) \newline Let $\mathcal{A}=\left\langle\underline{\mathbb{Q}},\leq,\underline{0}\right\rangle$ and $\mathcal{A}'=\left\langle\underline{\mathbb{Q}},\leq,\underline{1}\right\rangle$. $\underline{0}$ supports $\mathcal{A}$ with respect to $\mathcal{A}'$ and $\underline{1}$ supports $\mathcal{A}'$ with respect to $\mathcal{A}'$ so $\mathcal{A}\in PM(\mathcal{V}^*(\mathbb{Q}), Aut(\mathcal{A}'),\mathcal{F}^{Aut(\mathcal{A}')}_{fin})$ and $\mathcal{A}'\in PM(\mathcal{V}^*(\mathbb{Q}), Aut(\mathcal{A}),\mathcal{F}^{Aut(\mathcal{A})}_{fin})$.
    \newline By Corollary \ref{thm:structure_perm_model_eq}, $PM(\mathcal{V}^*(\mathbb{Q}), Aut(\mathcal{A}'),\mathcal{F}^{Aut(\mathcal{A}')}_{fin})=PM(\mathcal{V}^*(\mathbb{Q}), Aut(\mathcal{A}),\mathcal{F}^{Aut(\mathcal{A})}_{fin})$. 
    \newline Hence, $PM(\left\langle\mathbb{Q},\leq,0\right\rangle)=PM(\left\langle\mathbb{Q},\leq,1\right\rangle)$ (Definition \ref{def:permutation_model_from_structure}).
\end{example}
\section{Logic of Permutation Models}
\label{sec:computation}

\subsection{Summary}
In this section we:
\begin{itemize}
    \item State and prove a strengthening of Brunner's \cite{brunner_fraenkel-mostowski_1990} characterisation of finite-support permutation models as a first-order sentence (Theorem \ref{thm:finite_support_fo_improved}).
    \begin{itemize}
        \item We remove all uses of global choice and replace it with a weakening of choice, where necessary.
        \item We explain why this characterisation does not generalise to $<\kappa$-supports.
    \end{itemize}
    \item We show that the logic of finite-support permutation models to define and reason about orbit-finite structures (Section \ref{sec:computation}). Specifically, we show that:
    \begin{itemize}
        \item Orbit-finite constructions can be defined inside finite-support permutation models on oligomorphic groups.
        \item Statements about orbit-finite constructions are expressible inside finite-support permutation models on oligomorphic groups.
        \item We give the example of Ryll-Nardzewski functions being expressible.
    \end{itemize}
    \item We introduce the notion of set-theoretic forcing \cite{jech_axiom_1973}(Section \ref{sec:Almost-homogeneous_forcing}).
    \begin{itemize}
        \item We provide the definitions and theorems necessary to follow Section \ref{sec:cardinal}.
        \item We state Hall's theorem relating forcing to permutation models.
        \item We explain the impact on the expressibility of permutation models.
    \end{itemize}
\end{itemize}

\subsection{External Properties as Internal Properties}

\subsubsection{Finite-Support as a First-Order Property}

Our aim is to move a finite-support permutation model, $\mathcal{N}$, from one external universe to another. Suppose that we can do this, but that we are only guaranteed that $\mathcal{N}$ is a permutation model; we want a way to tell that $\mathcal{N}$ is a finite-support permutation model in this new universe. This is not guaranteed, since the groups on the atoms will have changed.
\newline One way of doing this, is to find some property expressed in the logic of $\mathcal{N}$ that ensures $\mathcal{N}$ is a finite-support permutation model. This means that if both $\mathcal{M}$ and $\mathcal{M}'$ contain $\mathcal{N}$ as a permutation model and $\mathcal{M}$ causes $\mathcal{N}$ to satisfy this property, then $\mathcal{N}$ is a finite-support permutation model in $\mathcal{M}'$. We first state a weakening of $AC$ formulated by Blass \cite{blass_injectivity_1979}.
\begin{definition}[Small Violations of Choice (SVC)]
\label{def:SVC}
    Let $\mathcal{M}\models ZFA$ and $x\in\mathcal{M}$. Then define $SVC(x)$ to hold in $\mathcal{M}$ if for all sets $y\in\mathcal{M}$, there exists an ordinal $\alpha$ and a surjection from $x\times\alpha$ onto $y$.
\end{definition}

\begin{lemma}
    \label{thm:chi_properties}
     \begin{itemize}
         \item If $SVC(x)$ and there is a surjection from $y$ onto $x$, then $SVC(y)$.
         \item Let $\alpha$ be an ordinal, if $SVC(x\times\alpha)$ then $SVC(x)$.
     \end{itemize}
\end{lemma}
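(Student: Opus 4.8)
The plan is to unwind the definition of $SVC$ in both parts and reduce everything to the single fact that a composite of surjections is again a surjection, together with elementary arithmetic of Cartesian products. In each part I would fix an arbitrary set $z\in\mathcal{M}$ and aim to produce an ordinal together with a surjection from the relevant product onto $z$.

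For the first part, assume $SVC(x)$ and fix a surjection $g:y\twoheadrightarrow x$. Given an arbitrary $z$, apply $SVC(x)$ to obtain an ordinal $\alpha$ and a surjection $f:x\times\alpha\twoheadrightarrow z$. First I would form the map $g\times\mathrm{id}_\alpha:y\times\alpha\to x\times\alpha$ sending $(b,\xi)\mapsto(g(b),\xi)$, which is a surjection because $g$ is. Then $f\circ(g\times\mathrm{id}_\alpha):y\times\alpha\twoheadrightarrow z$ is the required surjection, witnessed by the same ordinal $\alpha$, so $SVC(y)$ holds.

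For the second part, assume $SVC(x\times\alpha)$ and fix an arbitrary $z$. Applying the hypothesis yields an ordinal $\beta$ and a surjection $f:(x\times\alpha)\times\beta\twoheadrightarrow z$. The idea is to absorb both $\alpha$ and $\beta$ into a single ordinal. I would use the canonical associativity bijection between $(x\times\alpha)\times\beta$ and $x\times(\alpha\times\beta)$, followed by a canonical bijection $\alpha\times\beta\to\gamma$ for a suitable ordinal $\gamma$, for instance via Gödel pairing or the well-ordering induced by ordinal multiplication. Composing these yields a bijection $x\times\gamma\to(x\times\alpha)\times\beta$, and precomposing $f$ with it gives a surjection $x\times\gamma\twoheadrightarrow z$. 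As $z$ was arbitrary, $SVC(x)$ follows.

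Neither direction is technically deep; the one point worth checking carefully is that the auxiliary maps in the second part are genuinely choice-free, since $SVC$ is meant to be a \emph{weak} choice principle and an inadvertent appeal to $AC$ would defeat the purpose. Both the associativity bijection and the collapse of $\alpha\times\beta$ to an ordinal $\gamma$ are definable without choice (Gödel pairing gives an explicit well-ordering of pairs of ordinals), so the whole argument goes through in $ZFA$ alone. This, rather than any genuine obstacle, is the main subtlety to get right.
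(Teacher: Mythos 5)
Your proof is correct and takes essentially the same route as the paper: the first part is the identical composite (the paper writes $g'(l,r):=g(f(l),r)$, which is your $f\circ(g\times\mathrm{id}_\alpha)$), and in the second part the paper also collapses $\alpha\times\beta$ to a single ordinal via the choice-free bijection with the ordinal product $\alpha\cdot\beta$, which is one of the two canonical devices you name. Your closing remark about avoiding an inadvertent appeal to $AC$ is exactly the consideration the paper's use of the ordinal product is meant to address.
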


\begin{proof}
    Suppose $SVC\left(x\right)$ and let $f:y\twoheadrightarrow x$ be a surjection. Let $z$ be a set and take a surjection $g:x\times\alpha\twoheadrightarrow z$ for some ordinal $\alpha$. Define a surjection from $y\times\alpha$ onto $z$ by:
    \begin{align*}
        &g':y\times \alpha\twoheadrightarrow z\\
        &g'(l,r):=g(f(l),r)
    \end{align*}
    Therefore $SVC(y)$ holds.
    \newline Let $\alpha\in\mathbf{On}$ (the class of Ordinals) and suppose $SVC\left(x\times\alpha\right)$. Let $y$ be a set and take a surjection $g:x\times\alpha\times\beta\twoheadrightarrow z$ for some ordinal $\beta$. $\left|\alpha\times\beta\right|=\left|\alpha\cdot \beta\right|$, $\alpha\cdot \beta$ being the ordinal product, so take a bijection $f:\alpha\cdot\beta\leftrightarrow \alpha\times\beta$. Define a surjection from $x\times\left(\alpha\cdot\beta\right)\twoheadrightarrow y$ by:
    \begin{align*}
    &g':x\times\left(\alpha\cdot\beta\right)\twoheadrightarrow y\\
    &g'(l,r):=g(l,f(r))
    \end{align*}
    Therefore $SVC(x)$ holds.
\end{proof}

We have the following theorem by Brunner. 
\begin{theorem}[Brunner \cite{brunner_fraenkel-mostowski_1990}]
    \label{thm:finite_support_first_order}
    If:
    \begin{itemize}
        \item $\mathcal{M}\models ZFA+$`Global Choice'; and
        \item $\mathcal{N}$ is a permutation model of $\mathcal{M}$;
    \end{itemize}
    then
    \begin{align*}
        &\mathcal{N}\text{ is a finite-support permutation model of }\mathcal{M}\\
        \text{iff } &\mathcal{N}\models SVC\left(\mathbb{A}^{<\omega}\right)
    \end{align*}
\end{theorem}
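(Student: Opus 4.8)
The plan is to prove the two directions separately, writing $\mathcal N=PM(\mathcal M,G,\mathcal F)$ for the given presentation and recalling that $\mathcal F^G_{fin}$ is the smallest normal filter on $G$, so that $PM(\mathcal M,G,\mathcal F^G_{fin})\subseteq\mathcal N$ holds automatically. The forward direction asks for a surjection witnessing $SVC(\mathbb A^{<\omega})$ inside $\mathcal N$, while the backward direction asks us to reconstruct finite supports from such surjections. I expect the forward direction to be a direct construction exploiting Global Choice in $\mathcal M$, and the backward direction to be the genuinely delicate one.

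For the forward direction I would assume $\mathcal N=PM(\mathcal M,G,\mathcal F^G_{fin})$ and fix any $y\in\mathcal N$ with finite support $\mathbf b$. Since $G_{\mathbf b}$ fixes $y$ it also fixes each of its own orbits on $y$ setwise, so $y$ is partitioned into $G_{\mathbf b}$-orbits; using Global Choice in $\mathcal M$ I would well-order these as $\langle O_\xi:\xi<\alpha\rangle$ and, for each $\xi$, choose a representative $z_\xi\in O_\xi$ with a finite support $\mathbf a_\xi$. I then set $F(\mathbf a,\xi):=\pi(z_\xi)$ whenever $\pi\in G_{\mathbf b}$ satisfies $\pi(\mathbf a_\xi)=\mathbf a$; this is well defined because two such $\pi$ differ by an element of $(G_{\mathbf b})_{\mathbf a_\xi}\leq G_{z_\xi}$, and surjective onto $y$ since every $w\in O_\xi$ has the form $\pi(z_\xi)$. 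The crucial step is that $F$ is supported by $\mathbf b$: for $\sigma\in G_{\mathbf b}$ one computes $\sigma(F)(\mathbf a,\xi)=\sigma(F(\sigma^{-1}\mathbf a,\xi))=\sigma\rho(z_\xi)$ where $\rho(\mathbf a_\xi)=\sigma^{-1}\mathbf a$, and since $\sigma\rho\in G_{\mathbf b}$ with $\sigma\rho(\mathbf a_\xi)=\mathbf a$ this equals $F(\mathbf a,\xi)$ by well-definedness, so $\sigma(F)=F$. As all values of $F$ lie in $\mathcal N$ and $F$ has finite support, $F\in\mathcal N$, and extending $F$ to all of $\mathbb A^{<\omega}\times\alpha$ is a routine technicality, yielding $SVC(\mathbb A^{<\omega})$.

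For the backward direction, suppose $\mathcal N\models SVC(\mathbb A^{<\omega})$; replacing $\mathcal F$ by $\mathcal F_{nat}$ (Remark \ref{rem:natural_filter}) I may assume $\mathcal F$ is generated by the stabilisers $\{G_x:x\in\mathcal N\}$, so it suffices to show that every $x\in\mathcal N$ is finitely supported with respect to $G$, whence $\mathcal N\subseteq PM(\mathcal M,G,\mathcal F^G_{fin})$ and equality follows. The mechanism I would try is a support-transfer identity: applying $SVC$ to the equivariant orbit $G\cdot x\in\mathcal N$ produces a surjection $f:\mathbb A^{<\omega}\times\alpha\twoheadrightarrow G\cdot x$ in $\mathcal N$, and whenever $f(\mathbf a,\xi)=x$, any $\pi$ fixing both $f$ and $\mathbf a$ pointwise fixes $x$, giving $G_f\cap G_{\mathbf a}\leq G_x$.

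The main obstacle lies exactly here: membership $f\in\mathcal N$ only guarantees $G_f\in\mathcal F$, not that $f$ is finitely supported, so on its own the identity $G_f\cap G_{\mathbf a}\leq G_x$ is circular. I would break the circularity through the contrapositive: if some $x$ fails to be finitely supported, then no symmetric surjection can cover its orbit $G\cdot x$, because the values of any $f\in\mathcal N$ spread their supports too thinly. This generalises the concrete failure in the countable-support Fraenkel model, where a surjection supported by a countable set of atoms can only reach sets whose support lies within that fixed countable set together with a finite tuple, and so misses most of the orbit. Making this counting precise is where Global Choice in $\mathcal M$ enters, and it is most cleanly organised through Blass's forcing characterisation of $SVC$, which presents $\mathcal N$ as a symmetric part of a choice model and isolates the finite-support filters as exactly those for which the covering succeeds. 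I would not expect this argument to survive the passage to ${<}\kappa$-supports, since there the analogue of $\mathbb A^{<\omega}$ is no longer well-behaved under the filter operations.
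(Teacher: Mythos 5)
Your forward direction is correct, and it is essentially the mechanism the paper itself uses inside Theorem~\ref{thm:filterbase_by_chi}: the map sending $\pi\in G_{\mathbf{b}}$ to the pair $\left\langle\pi(\mathbf{a}_\xi),\pi(z_\xi)\right\rangle$ is the orbit-of-a-pair trick, with Global Choice doing the work of selecting representatives and supports. The backward direction, however, has a genuine gap, and it sits exactly at the point you flagged. Your reduction commits you to showing that every $x\in\mathcal{N}$ is finitely supported \emph{with respect to the original group} $G$, so that $\mathcal{N}=PM(\mathcal{M},G,\mathcal{F}^G_{fin})$. That is strictly stronger than the theorem, which asserts only that $\mathcal{N}$ is a finite-support permutation model with respect to \emph{some} group; and the actual proof (Brunner's, and the paper's strengthening via Theorems~\ref{thm:filterbase_by_chi} and~\ref{thm:kappa_support_fo}) obtains that weaker statement by \emph{changing the group}: one takes $H:=G_f$ for a well-chosen $SVC$-surjection $f\in\mathcal{N}$ and shows that $\left\{H_{\mathbf{a}}:\mathbf{a}\in\mathbb{A}^{<\omega}\right\}$ is a filter base for $H\#\mathcal{F}$, whence $H\#\mathcal{F}=\mathcal{F}^H_{fin}$ and $\mathcal{N}=PM(\mathcal{M},H,\mathcal{F}^H_{fin})$. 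Your contrapositive plan --- ``if $x$ is not finitely supported then no symmetric surjection can cover $G\cdot x$'' --- cannot work as stated, because the surjections that $SVC$ supplies in $\mathcal{N}$ are only guaranteed to be $\mathcal{F}$-symmetric; such a surjection can perfectly well cover $G\cdot x$ while certifying supports only relative to the subgroup $G_f$, so there is no contradiction to extract, and whether or not your stronger claim is even true, nothing in the sketch establishes it. The appeal to Blass's forcing characterisation gestures at a different theorem rather than supplying a mechanism.

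The idea you are missing is the choice of target for $SVC$: apply it not to a single orbit $G\cdot x$ but to one set that represents \emph{all} stabilisers simultaneously. After normalising $\mathcal{F}=\mathcal{F}_{nat}$ (Remark~\ref{rem:natural_filter}), the paper takes $B:=\mathcal{V}(\mathbb{A})_\alpha\cap\mathcal{N}$ with $\alpha$ large enough that every subgroup of the form $G_z$, $z\in\mathcal{N}$, equals $G_b$ for some $b\in B$; this $B$ is equivariant, lies in $\mathcal{N}$, and is constructed without any choice in $\mathcal{M}$ --- which is why the paper's backward direction needs no choice hypothesis at all, a sign that the Global-Choice ``counting'' you envisage is not the real engine. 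Then $SVC(\mathbb{A}^{<\omega})$ in $\mathcal{N}$ yields $f:\mathbb{A}^{<\omega}\times\beta\twoheadrightarrow B$ with $f\in\mathcal{N}$; put $H:=G_f\in\mathcal{F}$. For any $x\in\mathcal{N}$, pick $b\in B$ with $G_b=G_x$ and $\left\langle\mathbf{s},\xi\right\rangle$ with $f(\mathbf{s},\xi)=b$; your own support-transfer identity now gives $H_{\mathbf{s}}=H\cap G_{\mathbf{s}}\leqslant H\cap G_b=H_x$, which is exactly the filter-base statement needed. The circularity you identified disappears because $H$ is \emph{defined} as the stabiliser of $f$ rather than required in advance to be a finite-tuple stabiliser; the price, which is the content of the theorem, is that the finite supports one obtains are relative to $H$, not to $G$.
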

\begin{remark}
    `Global Choice' has multiple potential meanings, but is used here to say that there is a class surjection from $\mathbf{On}$ onto the universe. This is not a first order statement, as it is a disjunction over all the `Global Choice' axioms e.g. $V=L$.
\end{remark}
We can significantly weaken the hypotheses by removing intermediate steps from the proof structure. We begin by relating $SVC$ in a permutation model to the structure of the normal filter.
\begin{theorem}
    \label{thm:filterbase_by_chi}
    If:
    \begin{itemize}
        \item $\mathcal{M}\models ZFA$;
        \item $G\leqslant Sym(\mathbb{A})$;
        \item $\mathcal{F}$ is a normal filter on $G$ with $\mathcal{F}=\mathcal{F}_{nat}$ (Definition \ref{def:natural_filter});
        \item $\mathcal{N}=PM(\mathcal{M},G,\mathcal{F})$; and
        \item $S\in\mathcal{N}$;
    \end{itemize}
    then
    \begin{itemize}
        \item If $\mathcal{N}\models SVC(S)$, then there exists $H\in \mathcal{F}$ s.t. $\left\{H_x:x\in S\right\}$ is a filter base for $H\#\mathcal{F}$ (recall $\#$ from Example \ref{examples:filters}). 
        \item If $\mathcal{M}\models SVC(K)$ and there exists $H\in \mathcal{F}$ s.t. $\left\{H_x:x\in S\right\}$ is a filter base for $H\#\mathcal{F}$, then $\mathcal{N}\models SVC(S\times K)$.
    \end{itemize}
\end{theorem}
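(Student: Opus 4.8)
The plan is to treat the two implications separately. Throughout I will use the elementary identity $H_x = H\cap G_x$ for the stabiliser of $x$ inside $H$, so that each $H_x$ with $x\in S\subseteq\mathcal{N}$ already lies in $H\#\mathcal{F}$ (since $G_x\in\mathcal{F}_{nat}=\mathcal{F}$); hence the easy half of ``filter base'' — that $\langle\#\{H_x:x\in S\}\rangle\subseteq H\#\mathcal{F}$ — is immediate from $H\#\mathcal{F}$ being a normal filter on $H$ (Example \ref{examples:filters}). The real content of the first bullet is the reverse inclusion: every $H\cap K$ with $K\in\mathcal{F}$ must contain a finite intersection of $H_x$'s. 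Using $\mathcal{F}=\mathcal{F}_{nat}$, each such $K$ lies above $G_z$ for a single $z\in\mathcal{N}$ (take $z$ to be the tuple of the elements whose stabilisers generate $K$), so the task reduces to: for each $z\in\mathcal{N}$ produce $x\in S$ with $H_x\leqslant H_z$.

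For this extraction I would form the $G$-orbit $Y:=\{\pi(z):\pi\in G\}$, which is equivariant and whose members lie in $\mathcal{N}$, hence $Y\in\mathcal{N}$. Applying $SVC(S)$ inside $\mathcal{N}$ to $Y$ yields an ordinal $\alpha$ and a surjection $f:S\times\alpha\twoheadrightarrow Y$ with $f\in\mathcal{N}$; choose $(x,\beta)$ with $f(x,\beta)=z$. The key computation is that any $\pi\in G_f\cap G_x$ automatically fixes the ordinal $\beta$, whence $\pi(z)=\pi(f(x,\beta))=f(x,\beta)=z$ because $f$ is a function; that is, $G_f\cap G_x\leqslant G_z$. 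Intersecting with $H$ gives $H_f\cap H_x\leqslant H_z$. If $H$ is chosen to fix $f$ (so $H\leqslant G_f$, which is legitimate since $f\in\mathcal{N}$ gives $G_f\in\mathcal{F}$), then $H_f=H$ and we get exactly $H_x\leqslant H_z$, as required.

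The main obstacle is the uniformity of the choice of $H$: the surjection $f$, and hence $G_f$, depends on $z$, whereas the statement asks for a single $H\in\mathcal{F}$ serving every $z$ simultaneously. I would resolve this by applying $SVC(S)$ once to a single ``universal'' set whose orbit data generate the whole filter — exploiting that $\mathcal{F}=\mathcal{F}_{nat}$ is, as a filter on the set $G$, generated by a set-indexed family of stabilisers $\{G_z\}$ — and then setting $H$ to be the stabiliser of the resulting master surjection. In the finite-support special case this is transparent (apply $SVC(S)$ to $\mathbb{A}$ itself, giving $H_x\leqslant H_a$ for every atom $a$), and the general case is the step where I expect the bookkeeping of ``which single symmetric set codes a generating family of stabilisers'' to be delicate.

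For the second bullet I would argue in the opposite direction. Fix an arbitrary $y\in\mathcal{N}$; I want an ordinal $\gamma$ and a symmetric surjection $(S\times K)\times\gamma\twoheadrightarrow y$. Each $w\in y$ is symmetric, so $H_w\in H\#\mathcal{F}$ and the base property supplies $x\in S$ with $H_x\leqslant H_w$, i.e.\ an $S$-index that $H$-supports $w$; meanwhile $SVC(K)$ in $\mathcal{M}$ gives a surjection from $K\times\delta$ onto $TC(\{y\})$, providing $\mathcal{M}$-coordinates for each $w$. Combining the $S$-index with the $K\times\delta$-coordinate defines a surjection from $S\times K\times\gamma$ onto $y$, and the point to verify — the expected main obstacle here — is that this map is itself hereditarily symmetric, hence lies in $\mathcal{N}$; this is precisely where the base property is used, to convert the choiceful $\mathcal{M}$-enumeration into an $H\#\mathcal{F}$-supported object. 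Once the surjection is seen to live in $\mathcal{N}$, it witnesses $SVC(S\times K)$ in $\mathcal{N}$, with Lemma \ref{thm:chi_properties} available to absorb the trailing ordinal factor if the construction first yields $SVC((S\times K)\times\gamma)$.
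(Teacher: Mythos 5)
Your overall architecture coincides with the paper's proof (a master surjection plus the stabiliser computation for the first bullet; index-pairing plus a symmetry argument for the second), and your key computation $G_f\cap G_s\leqslant G_z$ when $f(s,\beta)=z$ is exactly the one the paper uses. However, both steps you defer are where the actual content of the theorem lives, and neither goes through as sketched. For the first bullet: the generators $\left\{G_z:z\in\mathcal{N}\right\}$ of $\mathcal{F}_{nat}$ do form a set of subgroups of $G$ inside $\mathcal{M}$, but that set is not an element of $\mathcal{N}$ (infinite subgroups of $G$ typically fail to be hereditarily symmetric, since they contain permutations that are not symmetric), so $SVC(S)$ --- a statement of $\mathcal{N}$ about sets of $\mathcal{N}$ --- cannot be applied to it. What you need, and never construct, is a set $B\in\mathcal{N}$ with $B\subseteq\mathcal{N}$ such that every stabiliser $G_z$, $z\in\mathcal{N}$, equals $G_b$ for some $b\in B$; and since $\mathcal{M}\models ZFA$ without any choice, you cannot pick one witness per subgroup. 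The paper's resolution is a choice-free rank truncation: for each subgroup $H'\leqslant G$ that is a stabiliser of an element of $\mathcal{N}$, let $\alpha_{H'}$ be the least ordinal such that $V(\mathbb{A})_{\alpha_{H'}}\cap\mathcal{N}$ contains a witness; the subgroups of $G$ form a set, so $\alpha:=\sup_{H'}\alpha_{H'}$ exists, and $B:=V(\mathbb{A})_\alpha\cap\mathcal{N}$ is equivariant, lies in $\mathcal{N}$, and witnesses every stabiliser. One application of $SVC(S)$ to $B$, setting $H:=G_f$, then finishes via your computation. Your parenthetical claim that the finite-support case is transparent by applying $SVC(S)$ to $\mathbb{A}$ is also incorrect: that gives, for each atom $a$, some $s$ with $H_s\leqslant H_a$, but a filter base in the sense of Definition \ref{def:filter_basis} must place a \emph{single} $H_s$ below each $H_{\mathbf{a}}$ for tuples $\mathbf{a}$, and $H_{s_1}\cap\dots\cap H_{s_n}$ need not be of that form.

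For the second bullet, the ``point to verify'' is not a verification but a construction you have not supplied, and the naive version fails: sending the $\mathcal{M}$-chosen index $\left\langle s,a,\alpha\right\rangle$ of $w$ to $w$ defines a function built from arbitrary choices in $\mathcal{M}$, and there is no reason for that function to be symmetric. The paper's device is orbit closure. Whenever $H_s\leqslant H_x$, the set $H\cdot\left\{\left\langle s,x\right\rangle\right\}$ is the graph of a function from $H\cdot\left\{s\right\}$ onto $H\cdot\left\{x\right\}$ (well-definedness is again the $H_s\leqslant H_x$ computation). So one forms $Y:=\left\{\left\langle s,x\right\rangle\in S\times y:H_s\leqslant H_x\right\}$, enumerates it by $K\times\gamma$ using $SVC(K)$ in $\mathcal{M}$, and glues
\begin{equation*}
    f':=\bigcup\left\{H\cdot\left\{\left\langle\left\langle s_{a,\alpha},a,\alpha\right\rangle,x_{a,\alpha}\right\rangle\right\}:a\in K,\alpha\in\gamma\right\}
\end{equation*}
This union is fixed setwise by $H$, hence symmetric with hereditarily symmetric members, even though the enumeration producing it lies only in $\mathcal{M}$; a final patching step (extend the domain and redirect stray values to a fixed $y_0\in y$) yields a genuine surjection $S\times K\times\gamma\twoheadrightarrow y$ inside $\mathcal{N}$. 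Your instinct that the base property is precisely what converts the choiceful enumeration into a symmetric object is right, but without the orbit-closure mechanism the proposal is missing the key idea in this direction as well; enumerating $TC(\left\{y\right\})$ is, by contrast, unnecessary --- the pairs in $Y$ suffice.
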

\begin{remark}
    The assumption that $\mathcal{F}=\mathcal{F}_{nat}$ is harmless as Remark \ref{rem:natural_filter} allows us to replace $\mathcal{F}$ by $\mathcal{F}_{nat}={\left(\mathcal{F}_{nat}\right)}_{nat}$.
\end{remark}
\begin{proof}
    We first find some set $B\in\mathcal{N}$, `witnessing' all the stabilisers (if $x\in\mathcal{N}$ then there is $y\in \mathcal{N}$ s.t. $G_x=G_y$). In Brunner's proof he uses $AC$ to pick elements, we avoid using $AC$ by taking some truncation of the universe via the cumulative hierarchy.
    \newline For $H\leqslant G$, s.t. $H=G_{x'}$ for some $x'\in \mathcal{N}$, define:
    \begin{equation*}
        \alpha_H:=min\left\{\alpha\in \mathbf{On}:\exists x\in V(\mathbb{A})_\alpha\cap\mathcal{N}. \space G_x=H\right\}
    \end{equation*}
    So there is $x\in V(\mathbb{A})_{\alpha_H}\cap\mathcal{N}$ with $H=G_x$.
    \newline Now define:
    \begin{equation*}
        \alpha=sup\left\{\alpha_H:H\leqslant G \text{ and }\exists x\in \mathcal{N}.G_x=H\right\}
    \end{equation*}
    So that $\alpha\geq \alpha_H$ for all $H\leqslant G$ s.t. $H=G_{x'}$ for some $x'\in \mathcal{N}$. So,  for $H\leqslant G$ s.t. $H=G_{x'}$ for some $x'\in \mathcal{N}$, there is $x\in V(\mathbb{A})_{\alpha}\cap\mathcal{N}$ with $H=G_x$. So let
    \begin{equation*}
        B:=V(\mathbb{A})_\alpha\cap\mathcal{N}
    \end{equation*}
    $B$ is equivariant and is a subset of $\mathcal{N}$, so $B\in\mathcal{N}$. Also, $\left\{G_x|x\in B\right\}$ is a filter base for $\mathcal{F}_{nat}=\mathcal{F}$.
    
    Suppose that $\mathcal{N}\models SVC(S)$. So $\mathcal{N}$ has an ordinal $\beta$ and a surjection $f:S\times \beta\twoheadrightarrow B$. Let $H:=G_f$. $H\in\mathcal{F}$ as $f\in\mathcal{N}$.
    \newline Let $x\in \mathcal{N}$. There is $b\in B$ s.t. $G_x=G_b$ and there are $s\in S$ and $\alpha\in \beta$ with $f(s,\alpha)=b$. If $\pi\in G_s\cap H$ then $\pi(f)=f$, $\pi(s)=s$ and $\pi(\alpha)=\alpha$ so that $\pi(b)=b$. It follows that:
    \begin{equation*}
        H_x=G_x\cap H=G_b\cap G_f\geqslant G_s\cap H=H_s
    \end{equation*}
    $\mathcal{F}$ is natural so $\left\{G_x:x\in\mathcal{N}\right\}$ is a base for $\mathcal{F}$, so $\left\{H_x:x\in\mathcal{N}\right\}$ is a base for $H\#\mathcal{F}$. For $x\in\mathcal{N}$, $H_x=H_s$ for some $s\in S$ so $\left\{H_x:x\in\mathcal{N}\right\}=\left\{H_s:s\in S\right\}$ is a base for $H\#\mathcal{F}$.
    
    Suppose that $\mathcal{M}\models SVC(K)$ and there exists $H\in \mathcal{F}$ s.t. $\left\{H_x:x\in S\right\}$ is a filter base for $H\#\mathcal{F}$.
    \newline Observe that for all $s\in S$ and $x\in\mathcal{N}$, if $H_s\leqslant H_x$ then the set $H\cdot \left\{\left\langle s,x\right\rangle\right\}$ is a surjection from $H \cdot\left\{s\right\}$ onto $H\cdot\left\{x\right\}$.
    Let $y\in\mathcal{N}$ and let:
    \begin{equation*}
        Y:=\left\{\left\langle s,x\right\rangle\in K\times y : H_s\leqslant H_x\right\}
    \end{equation*}
    Note that $y=\pi_2\left(Y\right)$ as $\left\{H_x:x\in S\right\}$ is a filter base for $H\#\mathcal{F}$ and $H_x\in H\#\mathcal{F}$ for $x\in y$. By $SVC(K)$, $\mathcal{N}$ has an ordinal $\gamma$ and a surjection from $K\times \gamma$ onto $Y$, which we will write as $\left\langle a,\alpha\right\rangle\mapsto\left\langle s_{a,\alpha},x_{a,\alpha}\right\rangle$.
    So $\left\langle a,\alpha\right\rangle\mapsto H\cdot\left\{x_{a,\alpha}\right\}$ is onto the $H$-orbits intersecting $y$.
    Define a surjection onto $H\cdot y$ by:
    \begin{equation*}
        f':=\bigcup\left\{H\cdot\left\{\left\langle \left\langle s_{a,\alpha},a,\alpha\right\rangle,x_{a,\alpha}\right\rangle\right\}:\alpha\in\gamma,a\in K\right\}
    \end{equation*}
    Observe that the domain of $f'$ is a subset of $S\times K\times\gamma$ and its range is a superset of $y$. Also note that $G_{f'}=H$ so that $f'\in\mathcal{N}$. We will expand $f'$'s domain and shrink its range to achieve a surjection from $S\times K\times\gamma$ onto $y$.
    \newline If $y=\emptyset$ then $f'$ is the desired surjection.
    \newline Otherwise, let $y_0\in y$ and define a surjection $f:S\times K\times\gamma\twoheadrightarrow y$ in $\mathcal{N}$ by:
    \begin{equation*}
        f(x,a,\alpha):= \left\{\begin{array}{ll}
             f'(x,a,\alpha) & f'(x,a,\alpha)\in y \\
             y_0 & \text{otherwise}
        \end{array}\right.
    \end{equation*}
    Now for all $y\in\mathcal{N}$, $\mathcal{N}$ contains an ordinal $\gamma$ and a surjection from $S\times K\times\gamma$ onto $y$. Therefore $\mathcal{N}$ satisfies $SVC(S\times K)$.
\end{proof}

We will now deduce that being a finite-support model can be expressed as logical property of permutation models.
Notice that the strengthened result says that $SVC(\mathbb{A^{<\omega}})$ is stable under taking finite-support permutation models.
\newline \label{sec:ext_prop}This is where the generalisation to ${<}\kappa$-support permutation models ends; we subtly rely on the fact that $\mathbb{A}^{<\omega}$ is absolute with respect to permutation models (all permutation models contain the same $\mathbb{A}^{<\omega}$ as the original universe, and all believe that it is $\mathbb{A}^{<\omega}$). For general $\kappa$, this is not the case e.g. a finite-support permutation model believes $SVC\left(\mathbb{A}^{<\aleph_1}\right)$ by Lemma \ref{thm:chi_properties} and Corollary \ref{thm:finite_support_fo_improved} but is not (necessarily) a ${<}\aleph_1$-support permutation model.
\begin{theorem}
    \label{thm:kappa_support_fo}
    If:
    \begin{itemize}
        \item $\mathcal{M}\models ZFA$;
        \item $\mathcal{N}$ is a permutation model of $\mathcal{M}$; and
        \item $\kappa$ is an infinite cardinal;
    \end{itemize}
    then
    \begin{itemize}
        \item If $\mathcal{M}\models SVC(\mathbb{A}^{<\kappa})$ and $\mathcal{N}$ is a ${<}\kappa$-support permutation model of $\mathcal{M}$, then $\mathcal{N}\models SVC(\mathbb{A}^{<\kappa})$ and $\mathcal{M}\models \mathbb{A}^{<\kappa}\in\mathcal{N}$.
        \item If $\mathcal{M}\models\mathbb{A}^{<\kappa}\in\mathcal{N}$ and $\mathcal{N}\models SVC(\mathbb{A}^{<\kappa})$, then $\mathcal{N}$ is a ${<}\kappa$-support permutation model of $\mathcal{M}$.
    \end{itemize}
\end{theorem}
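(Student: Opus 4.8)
The plan is to drive both implications through Theorem~\ref{thm:filterbase_by_chi}, which already packages the interaction between $SVC$ and the filter base of stabilisers; the two bullets are then essentially its two halves, glued to two auxiliary observations. Throughout, whenever $\mathcal{N}$ is presented merely as a permutation model I would take its filter to be $\mathcal{F}_{nat}$ using Remark~\ref{rem:natural_filter}, since this changes neither $\mathcal{N}$ nor the hypotheses and is exactly the normalisation Theorem~\ref{thm:filterbase_by_chi} demands. The first auxiliary fact is that $\mathbb{A}^{<\kappa}$ is hereditarily ${<}\kappa$-supported; the second is that restricting a filter to $H\#\mathcal{F}$ for $H\in\mathcal{F}$ does not move the permutation model.

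For the first bullet, suppose $\mathcal{N}=PM(\mathcal{M},G,\mathcal{F}^G_{{<}\kappa})$ and $\mathcal{M}\models SVC(\mathbb{A}^{<\kappa})$. First I would verify $\mathbb{A}^{<\kappa}\in\mathcal{N}$ directly: the set $\mathbb{A}^{<\kappa}$ is equivariant, each $\mathbf{a}\in\mathbb{A}^{<\kappa}$ is supported by its own range (a tuple of size ${<}\kappa$), and hereditarily each constituent pair $\langle\alpha,a\rangle$ is supported by the single atom $a$ while the index ordinals are pure, so the whole set is hereditarily ${<}\kappa$-supported. For the $SVC$ claim, note $\mathcal{F}^G_{{<}\kappa}$ is natural (Example~\ref{examples:natural_filter}) with filter base $\{G_\mathbf{a}:\mathbf{a}\in\mathbb{A}^{<\kappa}\}$, so taking $H=G$ witnesses the filter-base hypothesis of the second part of Theorem~\ref{thm:filterbase_by_chi}; applying it with $S=K=\mathbb{A}^{<\kappa}$ and the given $\mathcal{M}\models SVC(K)$ yields $\mathcal{N}\models SVC(\mathbb{A}^{<\kappa}\times\mathbb{A}^{<\kappa})$. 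I would then descend to $SVC(\mathbb{A}^{<\kappa})$ with Lemma~\ref{thm:chi_properties}: the equivariant splitting map sending a sequence together with a cut point $\delta<\kappa$ to its initial segment of length $\delta$ and its reindexed tail is a surjection $\mathbb{A}^{<\kappa}\times\kappa\twoheadrightarrow\mathbb{A}^{<\kappa}\times\mathbb{A}^{<\kappa}$ (here $\kappa$ being a cardinal guarantees $\alpha+\beta<\kappa$), so the first item of Lemma~\ref{thm:chi_properties} gives $SVC(\mathbb{A}^{<\kappa}\times\kappa)$ and the second item strips the $\kappa$ factor.

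For the second bullet, write $\mathcal{N}=PM(\mathcal{M},G,\mathcal{F})$ with $\mathcal{F}=\mathcal{F}_{nat}$. Since $\mathbb{A}^{<\kappa}\in\mathcal{N}$ and $\mathcal{N}\models SVC(\mathbb{A}^{<\kappa})$, the first part of Theorem~\ref{thm:filterbase_by_chi} with $S=\mathbb{A}^{<\kappa}$ produces $H\in\mathcal{F}$ for which $\{H_\mathbf{a}:\mathbf{a}\in\mathbb{A}^{<\kappa}\}$ is a filter base for $H\#\mathcal{F}$; comparing with Definition~\ref{def:kappa-support_filter} this says precisely $H\#\mathcal{F}=\mathcal{F}^{H}_{{<}\kappa}$. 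It then remains to establish the general fact that $PM(\mathcal{M},G,\mathcal{F})=PM(\mathcal{M},H,H\#\mathcal{F})$ whenever $H\in\mathcal{F}$, which I would prove by showing the per-element symmetry predicates coincide: for any $y$, if $G_y\in\mathcal{F}$ then $H_y=H\cap G_y\in H\#\mathcal{F}$, and conversely if $H_y=H\cap K$ with $K\in\mathcal{F}$, then $H\cap K\in\mathcal{F}$ is a subgroup of $G_y$, so upward closure of $\mathcal{F}$ forces $G_y\in\mathcal{F}$. Since hereditary symmetry (Definition~\ref{def:symmetric}) refers only to these predicates along $TC(\{x\})$, the two classes agree, whence $\mathcal{N}=PM(\mathcal{M},H,\mathcal{F}^H_{{<}\kappa})$ is a ${<}\kappa$-support permutation model.

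I expect the main obstacle to be the second bullet, specifically isolating and proving the invariance $PM(\mathcal{M},G,\mathcal{F})=PM(\mathcal{M},H,H\#\mathcal{F})$ and realising that the witnessing group must be allowed to change from $G$ to $H$: one cannot in general reuse $\mathcal{F}^G_{{<}\kappa}$, since for $\kappa>\aleph_0$ the ${<}\kappa$-stabilisers need not lie in $\mathcal{F}$. The only genuinely computational point is the equivariant surjection in the first bullet, and even there the content reduces to $\kappa$ being a cardinal.
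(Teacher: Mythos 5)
Your proposal is correct and follows essentially the same route as the paper's own proof: normalise the filter via Remark~\ref{rem:natural_filter}, apply the two halves of Theorem~\ref{thm:filterbase_by_chi} with $S=K=\mathbb{A}^{<\kappa}$, descend from $SVC(\mathbb{A}^{<\kappa}\times\mathbb{A}^{<\kappa})$ via the same splitting surjection and Lemma~\ref{thm:chi_properties}, and for the converse pass from $G$ to the witnessing $H$ and check that the symmetry predicates for $(G,\mathcal{F})$ and $(H,H\#\mathcal{F})$ coincide elementwise. The invariance step you flag as the main obstacle is exactly the argument the paper gives ($H_x\leqslant G_x$ with $H\#\mathcal{F}\subseteq\mathcal{F}$ in one direction, $H_x=H\cap G_x$ in the other), so there is no gap.
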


\begin{proof}
    Suppose that $\mathcal{N}=PM(\mathcal{M},G,\mathcal{F})$ and assume that $\mathcal{F}=\mathcal{F}_{nat}$, as $\mathcal{F}_{nat}=\left(\mathcal{F}_{nat}\right)_{nat}$ and Remark \ref{rem:natural_filter}.
    
    Suppose that $\mathcal{M}\models SVC(\mathbb{A}^{<\kappa})$ and $\mathcal{F}=\mathcal{F}^G_{<\kappa}$, so $\left\{G_f:f\in\mathbb{A}^{<\kappa}\right\}$ is a filter base for $\mathcal{F}$. The elements of $\mathbb{A}^{<\kappa}$ are supported by themselves, so $\mathcal{M}\models\mathbb{A}^{<\kappa}\subseteq\mathcal{N}$ and $\mathbb{A}^{<\kappa}$ is equivariant so $\mathcal{M}\models\mathbb{A}^{<\kappa}\in\mathcal{N}$.
    \newline By Theorem \ref{thm:filterbase_by_chi}, $\mathcal{N}\models SVC(\mathbb{A}^{<\kappa}\times\mathbb{A}^{<\kappa})$. We now construct a surjection from $\mathbb{A}^{<\kappa}\times\kappa$ onto $\mathbb{A}^{<\kappa}\times\mathbb{A}^{<\kappa}$ in $\mathcal{N}$. Let
    \begin{align*}
    &f:\mathbb{A}^{<\kappa}\times\kappa\twoheadrightarrow\mathbb{A}^{<\kappa}\times\mathbb{A}^{<\kappa}\\
    &f(\mathbf{a},\alpha):=\left\langle \mathbf{a}_{i<min\left(\left|\mathbf{a}\right|,\alpha\right)},\mathbf{a}_{min\left(\left|\mathbf{a}\right|,\alpha\right)\leq i<\left|\mathbf{a}\right|}\right\rangle
    \end{align*}
    Given a sequence indexed by an ordinal ${<}\kappa$ and an ordinal $\alpha<\kappa$, $f$ splits the sequence into two sequences indexed by ordinals ${<}\kappa$ by taking the first $\alpha$ elements as the first sequence and the remaining elements as the second sequence. This is surjective, as if $\left(a_i\right)_{i\in\alpha}$ and $\left(b_i\right)_{i\in\beta}$ are two sequences in $\mathbb{A}$ with $\alpha,\beta<\kappa$, then $\alpha+\beta<\kappa$ so $f({\left(a_i\right)_{i\in\alpha}}^\frown\left(b_i\right)_{i\in\beta} )=\left\langle \left(a_i\right)_{i\in\alpha},\left(b_i\right)_{i\in\beta}\right\rangle$. Now by Lemma $\ref{thm:chi_properties}$, $\mathcal{N}\models SVC(\mathbb{A}^{<\kappa}\times\kappa)$ so $\mathcal{N}\models SVC(\mathbb{A}^{<\kappa})$.

    Suppose that $\mathcal{N}\models SVC(\mathbb{A}^{<\kappa})$ and $\mathcal{M}\models\mathbb{A}^{<\kappa}\in\mathcal{N}$. By Theorem \ref{thm:filterbase_by_chi}, there is $H\in\mathcal{F}$ s.t. $\left\{H_f | f\in\mathbb{A}^{<\kappa}\right\}$ is a filter base of $H\#\mathcal{F}$ so $H\#\mathcal{F}=\mathcal{F}^H_{<\kappa}$. Hence $\mathcal{N}=PM(\mathcal{M},H,\mathcal{F}^H_{<\kappa})$.
    Let $x$ be an element of the universe. If $H_x\in \mathcal{F}^H_{<\kappa})$ then, as $\mathcal{F}^H_{<\kappa}=H\#\mathcal{F}\subseteq\mathcal{F}$ and $H_x\leqslant G_x$, $G_x\in\mathcal{F}$. If $G_x\in\mathcal{F}$, then $H_x=H\cap G_x \in H\#\mathcal{F}= \mathcal{F}^H_{<\kappa}$. Hence $\mathcal{N}=PM(\mathcal{M},H,\mathcal{F}^H_{<\kappa})$.
\end{proof}
\begin{corollary}
    \label{thm:finite_support_fo_improved}
    If:
    \begin{itemize}
        \item $\mathcal{M}\models ZFA$; and
        \item $\mathcal{N}$ is a permutation model of $\mathcal{M}$;
    \end{itemize}
    then
    \begin{itemize}
        \item If $\mathcal{M}\models SVC(\mathbb{A}^{<\omega})$ and $\mathcal{N}$ is a finite-support permutation model of $\mathcal{M}$, then $\mathcal{N}\models SVC(\mathbb{A}^{<\omega})$.
        \item If $\mathcal{N}\models SVC(\mathbb{A}^{<\omega})$, then $\mathcal{N}$ is a finite-support permutation model of $\mathcal{M}$.
    \end{itemize}
\end{corollary}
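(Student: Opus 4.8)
The plan is to derive this as the special case $\kappa=\aleph_0$ of Theorem \ref{thm:kappa_support_fo}, noting that $\mathbb{A}^{<\aleph_0}=\mathbb{A}^{<\omega}$ and that a ${<}\aleph_0$-support permutation model is precisely a finite-support permutation model (Definition \ref{def:kappa-support_filter}). The first bullet is then immediate: the first bullet of Theorem \ref{thm:kappa_support_fo} yields both $\mathcal{N}\models SVC(\mathbb{A}^{<\omega})$ and $\mathcal{M}\models\mathbb{A}^{<\omega}\in\mathcal{N}$, and I simply retain the former.

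The second bullet requires a little more care, because the corresponding bullet of Theorem \ref{thm:kappa_support_fo} carries the extra hypothesis $\mathcal{M}\models\mathbb{A}^{<\omega}\in\mathcal{N}$, whereas the corollary asserts the conclusion with no such assumption. The key step, then, is to observe that this hypothesis is automatic for $\kappa=\aleph_0$: every permutation model $\mathcal{N}=PM(\mathcal{M},G,\mathcal{F})$ of $\mathcal{M}$ already contains $\mathbb{A}^{<\omega}$. I would verify this directly from the definition of hereditary symmetry (Definition \ref{def:symmetric}). By the last clause of Definition \ref{def:normal_filter}, $G_a\in\mathcal{F}$ for each atom $a$; since $\mathcal{F}$ is closed under finite intersection, any finite tuple $\mathbf{a}=(a_0,\dots,a_{n-1})$ is stabilised by $G_{a_0}\cap\cdots\cap G_{a_{n-1}}\in\mathcal{F}$, so each such tuple is symmetric, and hereditarily so, as the ordered-pair scaffolding encoding tuples is pure and hence equivariant. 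Finally $\mathbb{A}^{<\omega}$ is itself equivariant, since every $\pi\in G$ permutes finite tuples of atoms bijectively and thus fixes their totality setwise; hence $G_{\mathbb{A}^{<\omega}}=G\in\mathcal{F}$ and $\mathbb{A}^{<\omega}$ is hereditarily symmetric. Therefore $\mathcal{M}\models\mathbb{A}^{<\omega}\in\mathcal{N}$ holds for any permutation model, and the hypothesis may be discharged.

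With this observation in hand, both directions follow by invoking Theorem \ref{thm:kappa_support_fo} at $\kappa=\aleph_0$. I do not expect a genuine obstacle: the only substantive content is the absoluteness of $\mathbb{A}^{<\omega}$ across permutation models, which the preceding discussion in Section \ref{sec:ext_prop} already isolates as precisely the feature distinguishing the finite-support case from the general ${<}\kappa$ case, where $\mathbb{A}^{<\kappa}$ need not belong to $\mathcal{N}$ and the analogous simplification fails.
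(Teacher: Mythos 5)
Your proposal is correct and takes essentially the same route as the paper, whose entire proof is the one-line observation that $\mathcal{M}\models\mathbb{A}^{<\omega}\in\mathcal{N}$ always holds, followed by an application of Theorem \ref{thm:kappa_support_fo} at $\kappa=\aleph_0$. Your additional verification that finite tuples of atoms are hereditarily symmetric and that $\mathbb{A}^{<\omega}$ is equivariant simply spells out the justification the paper leaves implicit.
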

\begin{proof}
    $\mathcal{M}\models \mathbb{A}^{<\omega}\in\mathcal{N}$ is always true, so apply Theorem \ref{thm:kappa_support_fo} for $\aleph_0$.
\end{proof}
\subsubsection{Orbit-Finite}

As our aim is to identify the logic of different finite-support permutation models so that we can transfer theorems about orbit-finite constructions, we want to be able to express orbit-finiteness. For a set, $x$, the property of being orbit-finite is defined dependent on the group inducing the permutation model, so we cannot directly refer to it inside of the finite-support permutation model. However, the following result says that, inside a finite-support permutation model, there is a property of sets that characterises when a set is orbit-finite with respect to the group inducing the permutation model.

\begin{definition}
\label{def:dedekind-finite}
    A set $x$ is Dedekind-finite if there exists an injection from $\mathbb{N}$ onto $x$.
\end{definition}
\begin{theorem}[Blass \cite{blass_power-dedekind_nodate}]
    \label{thm:orbit_finite_internal}
    If:
    \begin{itemize}
        \item $\mathcal{M}\models ZFA+AC$;
        \item $G\leqslant Sym(\mathbb{A})$ is oligomorphic;
        \item $\mathcal{N}=PM(\mathcal{M},G,\mathcal{F}^G_{fin})$; and
        \item $x\in\mathcal{N}$ is a set;
    \end{itemize}
    then
    \begin{align*}
        &\mathcal{N}\models\text{`$\mathcal{P}(x)$ is Dedekind-finite'}\\
        \text{iff }&x\text{ is orbit-finite with respect to }G
    \end{align*}
\end{theorem}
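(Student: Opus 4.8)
The plan is to unwind both sides into the single question of whether there is a finitely supported injection $\mathbb{N}\to\mathcal{P}(x)$ lying in $\mathcal{N}$ (recall that $\mathcal{P}(x)$, computed in $\mathcal{N}$, consists of the hereditarily finitely supported subsets of $x$, and that $\mathcal{P}(x)$ is Dedekind-finite exactly when no such injection exists). The organising observation I would isolate first is purely about supports: if $\mathbf{a}\in\mathbb{A}^{<\omega}$ supports $x$, then a subset $S\subseteq x$ is supported by $\mathbf{a}$ if and only if $S$ is a union of $G_{\mathbf{a}}$-orbits contained in $x$. Indeed, for $s\in S$ and $\pi\in G_{\mathbf{a}}$ we have $\pi(s)\in\pi(S)=S$, so $S$ absorbs whole $G_{\mathbf{a}}$-orbits. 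This reduces everything to counting $G_{\mathbf{a}}$-orbits inside $x$, i.e. to the orbit space $Q:=x/G_{\mathbf{a}}$. Note also that for any injection $f\colon\mathbb{N}\to\mathcal{P}(x)$ in $\mathcal{N}$, each value $f(n)$ is supported by a common support $\mathbf{a}$ of $f$ (since $n$ is equivariant), so each $f(n)$ is an $\mathbf{a}$-supported subset of $x$.

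For the direction ``$x$ orbit-finite $\Rightarrow$ $\mathcal{P}(x)$ Dedekind-finite'', I would argue by contradiction. Suppose $f\colon\mathbb{N}\to\mathcal{P}(x)$ is an injection in $\mathcal{N}$, and enlarge its support to a single finite tuple $\mathbf{a}$ supporting both $f$ and $x$. Since $G$ is oligomorphic and $x$ is orbit-finite, Lemma \ref{thm:orbit_finite_equiv} gives that $x$ is contained in a finite union of $G_{\mathbf{a}}$-orbits, and as $x$ is itself $\mathbf{a}$-supported it is exactly a finite union, say of $m$ orbits. By the observation above, every $\mathbf{a}$-supported subset of $x$ is a union of some subcollection of these $m$ orbits, so there are at most $2^m$ of them. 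Hence $f(\mathbb{N})$ lands in a set of size at most $2^m$, contradicting injectivity of $f$ on the infinite set $\mathbb{N}$. This is the step that genuinely uses oligomorphicity, through Lemma \ref{thm:orbit_finite_equiv}.

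For the converse, ``$x$ not orbit-finite $\Rightarrow$ $\mathcal{P}(x)$ Dedekind-infinite'', I would fix any support $\mathbf{a}$ of $x$; then $x$ is a union of $G_{\mathbf{a}}$-orbits that is \emph{not} finite by Definition \ref{def:orbit-finite}, so $Q=x/G_{\mathbf{a}}$ is infinite. Working in $\mathcal{M}\models AC$, I would choose an injection $\iota\colon\mathbb{N}\to Q$, picking out distinct orbits $O_0,O_1,\dots\subseteq x$. The key point, and the one I expect a careful reader to want checked, is that this object lives in $\mathcal{N}$: each $O_n$ is $G_{\mathbf{a}}$-invariant, hence supported by $\mathbf{a}$ and a member of $\mathcal{N}$, so for $\pi\in G_{\mathbf{a}}$ we get $\pi(\iota)=\{\langle n,\pi(O_n)\rangle:n\in\mathbb{N}\}=\{\langle n,O_n\rangle:n\in\mathbb{N}\}=\iota$, making $\iota$ itself $\mathbf{a}$-supported and hereditarily symmetric. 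Thus $\iota$ is an injection of $\mathbb{N}$ into $Q\subseteq\mathcal{P}(x)$ inside $\mathcal{N}$, witnessing Dedekind-infiniteness. The main obstacle here is conceptual rather than computational: one must notice the asymmetry that enumerating \emph{orbits} is ``free'' (they are already $\mathbf{a}$-supported, so any choice of countably many distinct ones assembles into a supported function), whereas the naive attempt to enumerate \emph{elements} of $x$ would fail exactly as it does for $\mathbb{A}$ itself. It is worth flagging that this direction leans on $AC$ in $\mathcal{M}$ (to extract a countable subset of the infinite $Q$) but, unlike the first direction, does not require oligomorphicity.
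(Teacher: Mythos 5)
The paper does not actually prove this statement: it is imported as Blass's theorem via the citation, and the only in-paper commentary is the remark that without oligomorphicity one must replace ``orbit-finite'' by ``contained in a finite union of $G_{\mathbf{a}}$-orbits for every $\mathbf{a}$''. So there is no in-paper proof to compare against; what you have written is a correct, self-contained argument that fills in what the paper delegates to the literature. Checking it: your support--orbit dictionary (an $\mathbf{a}$-supported subset of an $\mathbf{a}$-supported $x$ is exactly a union of $G_{\mathbf{a}}$-orbits inside $x$) is right; in the forward direction you correctly funnel oligomorphicity through Lemma \ref{thm:orbit_finite_equiv} so that $x$ becomes a finite union of $G_{\mathbf{a}}$-orbits for the common support $\mathbf{a}$ of $x$ and the putative injection $f$ (enlarging the support is essential here, since $G_{\mathbf{a}}$-orbits refine $G_{\mathbf{b}}$-orbits and the refinement could a priori be infinite), capping the $\mathbf{a}$-supported subsets of $x$ at $2^m$ and contradicting injectivity; in the converse direction you correctly exploit that the orbits themselves are already $\mathbf{a}$-supported, so an $AC$-chosen enumeration of countably many distinct orbits is itself $\mathbf{a}$-supported, and it lies in $\mathcal{N}$ because hereditary symmetry is free ($O_n\subseteq x\in\mathcal{N}$ and $\mathcal{N}$ is transitive --- a point you elide slightly but which is immediate). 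Your closing observation that oligomorphicity is used only in the first direction and $AC$ only in the second is exactly consistent with the paper's remark about the general, non-oligomorphic form of Blass's theorem. One pedantic note: you silently use the standard meaning of Dedekind-finite (no injection of $\mathbb{N}$ into the set, inside $\mathcal{N}$); the paper's Definition \ref{def:dedekind-finite} as literally written (``there exists an injection from $\mathbb{N}$ onto $x$'') is a typo, and your reading is the intended one.
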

If we drop the assumption that $G$ is oligomorphic, then we must replace `$x$ is orbit-finite' by `for all $\mathbf{a}\in\mathbb{A}^{<\omega}$, $x$ is contained within a finite union of $G_\mathbf{a}$-orbits' (Blass uses the equivalent definition of orbit-finite from Lemma \ref{thm:orbit_finite_equiv}). We want oligomorphicity to expressible in a permutation model; we use Remark \ref{rem:orbit-finite2_implies_orbit-finite1}. We cannot use the above theorem directly, as we don't know a priori that $G$ is oligomorphic.

\begin{theorem}
    \label{thm:oligomorphic_pm}
    If:
    \begin{itemize}
        \item $\mathcal{M}\models ZFA+AC$;
        \item $G\leqslant Sym(\mathbb{A})$; and
        \item $\mathcal{N}=PM(\mathcal{M},G,\mathcal{F}^G_{fin})$;
    \end{itemize}
    then
    \begin{align*}
        &G\text{ is oligomorphic}\\
        \text{iff }&\text{for all } n\in\mathbb{N}\text{, }\mathcal{N}\models\text{`$\mathcal{P}(\mathbb{A}^n)$ is Dedekind-finite'}
    \end{align*}
\end{theorem}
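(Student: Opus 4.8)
The plan is to apply Blass's characterisation (Theorem \ref{thm:orbit_finite_internal}) to the equivariant sets $\mathbb{A}^n$, observing that oligomorphicity of $G$ is by definition the assertion that each $\mathbb{A}^n$ is a finite union of $G$-orbits. Since $\mathbb{A}^n$ is equivariant and each of its elements (a tuple of atoms) is supported by itself, $\mathbb{A}^n$ is hereditarily finitely supported and hence lies in $\mathcal{N}$; so $\mathcal{P}(\mathbb{A}^n)$ and the statement `$\mathcal{P}(\mathbb{A}^n)$ is Dedekind-finite' are meaningful inside $\mathcal{N}$. The two directions are then pushed through two different forms of Blass's theorem.

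For the forward direction, suppose $G$ is oligomorphic. Then each $\mathbb{A}^n$ is a finite union of $G$-orbits, hence orbit-finite (witnessed by $\mathbf{a}=\emptyset$ in Definition \ref{def:orbit-finite}). As $G$ oligomorphic is exactly the standing hypothesis of Theorem \ref{thm:orbit_finite_internal}, I would apply that theorem to $x=\mathbb{A}^n$ to conclude $\mathcal{N}\models$ `$\mathcal{P}(\mathbb{A}^n)$ is Dedekind-finite' for every $n$.

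The converse needs more care: we are handed internal Dedekind-finiteness of every $\mathcal{P}(\mathbb{A}^n)$ and must derive oligomorphicity, but we cannot invoke Theorem \ref{thm:orbit_finite_internal} directly, since its use already presupposes the conclusion. Instead I would use the oligomorphic-free form of Blass's result recorded immediately after Theorem \ref{thm:orbit_finite_internal}: for a set $x\in\mathcal{N}$, $\mathcal{N}\models$ `$\mathcal{P}(x)$ is Dedekind-finite' iff for every $\mathbf{a}\in\mathbb{A}^{<\omega}$ the set $x$ is contained in a finite union of $G_{\mathbf{a}}$-orbits. Applying this to $x=\mathbb{A}^n$ and specialising to the empty tuple $\mathbf{a}=\emptyset$ (so $G_\emptyset=G$) shows $\mathbb{A}^n$ is contained in a finite union of $G$-orbits. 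Because $\mathbb{A}^n$ is $G$-invariant it is itself a union of $G$-orbits, and a union of $G$-orbits contained in finitely many of them must equal finitely many of them; hence $\mathbb{A}^n$ is a finite union of $G$-orbits. As this holds for all $n$, $G$ is oligomorphic.

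The only genuine obstacle is exactly this circularity: the clean statement of Theorem \ref{thm:orbit_finite_internal} bakes in oligomorphicity, so the backward direction must route through its general form, and the small but essential move is that specialising the support to $\mathbf{a}=\emptyset$ recovers honest $G$-orbits, after which `contained in finitely many' upgrades to `equal to finitely many' precisely because $\mathbb{A}^n$ is already invariant. No choice principles beyond the $AC$ assumed on $\mathcal{M}$ are required, as that assumption is already consumed inside Theorem \ref{thm:orbit_finite_internal}.
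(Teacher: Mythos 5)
Your proposal is correct and follows essentially the same route as the paper: the forward direction applies Theorem \ref{thm:orbit_finite_internal} directly to the orbit-finite sets $\mathbb{A}^n$, and the backward direction avoids the circularity by invoking the oligomorphicity-free form of Blass's result and then upgrading ``contained in finitely many $G$-orbits'' to ``equal to finitely many'' via equivariance of $\mathbb{A}^n$, which is exactly the content of Remark \ref{rem:orbit-finite2_implies_orbit-finite1} that the paper cites at this step.
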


\begin{proof}
    $\left(\implies\right)$: Suppose that $G$ is oligomorphic so for all $n\in\mathbb{N}$, $\mathbb{A}^n$ is a finite union of $G$-orbits so $\mathbb{A}^n$ is orbit-finite so $\mathcal{N}\models\text{\it{`$\mathcal{P}(\mathbb{A}^n)$ is Dedekind-finite'}}$ by Theorem \ref{thm:orbit_finite_internal}.
    \newline $\left(\impliedby\right)$: Suppose that for all $n\in\mathbb{N}$, $\mathcal{N}\models\text{\it{`$\mathcal{P}(\mathbb{A}^n)$ is Dedekind-finite'}}$. So for $n\in\mathbb{N}$, $\mathbb{A}^n$ is contained within a finite union of $G_\mathbf{a}$-orbits so, by Remark \ref{rem:orbit-finite2_implies_orbit-finite1}, $\mathbb{A}^n$ is a finite union of $G$-orbits. Hence, $G$ is oligomorphic.
\end{proof}

\subsection{Logical Truth}

Now that we can refer to being `orbit-finite' from inside a finite-support permutation model, we can define orbit-finite structures. See \cite{bojanczyk_slightly_nodate} and \cite{pitts_nominal_2013} for more details. To design these constructions, we want to refer to the structure on the atoms that induces the permutation model; this property is expressible, and we reserve the details for Section \ref{sec:Almost-homogeneous_forcing}.
\newline If we want to define constructions inside the finite-support permutation models induced by a structure $\mathcal{A}$, then it is crucial that model truth in the permutation model is the same as model truth in the original universe. This is because the satisfaction relation, $\models$, is $\Delta^{ZFA}_1$ (Definition \ref{def:absoluteness}).
\begin{remark}
    If:
    \begin{itemize}
        \item $\mathcal{M}\models ZFA$;
        \item $\mathcal{A}$ is a structure on $\mathbb{A}$;
        \item $\mathcal{N}=PM(\mathcal{M},Aut(\mathbb{A}),\mathcal{F}^{Aut(\mathbb{A})}_{fin})$;
        \item $\phi$ is a formula in the language of $\mathcal{A}$; and
        \item $\mathbf{a}\in\mathbb{A}^{<\omega}$;
    \end{itemize}
    then
    \begin{equation*}
        \mathcal{M}\models\left(\mathcal{A}\models\phi\left(\mathbf{a}\right)\right)\iff \mathcal{N}\models\left(\mathcal{A}\models\phi\left(\mathbf{a}\right)\right)
    \end{equation*}
    So truth in $\mathcal{A}$ is expressible in $\mathcal{N}$.
\end{remark}

\begin{remark}\label{rem:permutation_model_expressive}
    When $\mathcal{A}$ is oligomorphic, any definition or statement that is $\Delta^{ZFA}_1$, excepting reference to orbit-finiteness (formally, we extend the language by a unary predicate to refer to orbit-finiteness and define the $\Delta^{ZFA}_1$ formulae in this language), can be expressed in $\mathcal{N}$. This principle covers orbit-finite models of computations and their actions e.g. Turing machines and their languages.
\end{remark}

\subsection{Almost-homogeneous Forcing}
\label{sec:Almost-homogeneous_forcing}
We now state an important theorem.
\begin{theorem}[Eric J. Hall \cite{hall_generic_2000}]
    \label{thm:perm_forcing}
    If:
    \begin{itemize}
        \item $\mathcal{M}\models ZFA+AC$;
        \item $\mathcal{N}\subseteq\mathcal{M}$ is transitive;
        \item $\mathcal{N}\models ZFA$; and
        \item $\mathcal{N}$ and $\mathcal{M}$ have the same pure sets and the same set of atoms;
    \end{itemize}
    then
    \begin{align*}
        &\mathcal{N}\text{ is a permutation model of }\mathcal{M}\\
        \text{iff }&\mathcal{M}\text{ is a generic extension of }\mathcal{N}\text{ by some almost-homogeneous notion of forcing}
    \end{align*}
\end{theorem}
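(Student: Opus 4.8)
The plan is to prove the biconditional in two directions, using throughout the standard dictionary between automorphisms of a forcing poset and symmetries of its generic extension, together with the observation that because $\mathcal{N}$ and $\mathcal{M}$ share their pure sets, any forcing realising $\mathcal{M}$ over $\mathcal{N}$ adds no new pure sets and so only ever adds \emph{impure} information, i.e.\ data correlating the atoms. This last point is what forces the forcing to be, up to equivalence, ``about the atoms'', and it is the bridge between the two formalisms.

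For the direction that a generic extension gives a permutation model, suppose $\mathcal{M} = \mathcal{N}[H]$ for an almost-homogeneous $\mathbb{P} \in \mathcal{N}$. First I would use the shared-pure-sets hypothesis to replace $\mathbb{P}$, up to forcing equivalence, by a poset acting on $\mathbb{A}$, and then exhibit a group $\mathcal{G} \leqslant Sym(\mathbb{A})^{\mathcal{M}}$ together with a normal filter $\mathcal{F}$ (the finite- or appropriately small-support filter) such that $PM(\mathcal{M}, \mathcal{G}, \mathcal{F}) = \mathcal{N}$. The containment $\mathcal{N} \subseteq PM(\mathcal{M},\mathcal{G},\mathcal{F})$ is the routine half; the reverse containment is where almost-homogeneity is essential: homogeneity forces the truth value in $\mathcal{M}$ of any statement with parameters from $\mathcal{N}$ to be decided by the weakest condition, and this rigidity is precisely what prevents a genuinely new hereditarily symmetric object from appearing, pinning the symmetric kernel down to exactly $\mathcal{N}$.

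For the converse, suppose $\mathcal{N} = PM(\mathcal{M}, G, \mathcal{F})$. Working inside $\mathcal{N}$ I would build the ``symmetry-collapsing'' poset $\mathbb{P}$ determined by $(G,\mathcal{F})$: in the prototypical finite-support case its conditions are the finite partial injections from an ordinal into $\mathbb{A}$, ordered by reverse inclusion, so that a generic is exactly an enumeration (well-ordering) of the atoms --- the data present in $\mathcal{M}$ via $AC$ but absent from $\mathcal{N}$; for a general filter the condition size is controlled by $\mathcal{F}$. The group $G$ acts on $\mathbb{P}$ by post-composition, and I would verify almost-homogeneity directly from this action, since $G$ acts on the atoms transitively enough that, given two conditions, some group element carries one to a condition compatible with the other. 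I would then take $H$ to be the enumeration read off from the choice function in $\mathcal{M}$ and establish the three points: $H$ is $\mathbb{P}$-generic over $\mathcal{N}$; $\mathcal{N}[H] \subseteq \mathcal{M}$ (as $H \in \mathcal{M}$); and $\mathcal{M} \subseteq \mathcal{N}[H]$, the latter because, using $AC$ in $\mathcal{M}$ and the shared pure sets, every element of $\mathcal{M}$ is coded by a pure set together with $H$.

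The main obstacle I expect lies in this second direction, in the genericity and exactness verification: showing that the enumeration supplied by $\mathcal{M}$'s choice function meets every dense subset of $\mathbb{P}$ lying in $\mathcal{N}$ --- this is exactly where the tension between the rich symmetry group $G$ and the absence of a well-ordering in $\mathcal{N}$ is exploited, one pushing an arbitrary condition into a given dense set by a group element --- and then checking that $\mathcal{N}[H]$ is neither larger nor smaller than $\mathcal{M}$. The parallel difficulty in the first direction is extracting an honest atom-permutation action from an arbitrary almost-homogeneous poset; the hypothesis that $\mathcal{N}$ and $\mathcal{M}$ share pure sets and atoms is what makes this reduction possible, and carrying it out carefully is the technical heart of that half.
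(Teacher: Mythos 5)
The paper does not actually prove this statement: it is Hall's theorem, imported from \cite{hall_generic_2000} and used as a black box, so there is no in-paper proof to compare against; the closest the paper comes are the one-directional, special-case implementations in Lemma \ref{thm:transfer_down}, Theorem \ref{thm:transfer_down_sub} and Lemma \ref{lm:sufficient_group}. Judged against that machinery, your sketch of the direction ``permutation model $\Rightarrow$ generic extension'' has the right ideas for $G=Sym(\mathbb{A})$ (genericity of an $\mathcal{M}$-enumeration obtained by pushing conditions into dense sets with group elements fixing the dense set's support, and exactness $\mathcal{M}\subseteq\mathcal{N}[H]$ via pure codes --- essentially how the paper treats the first Fraenkel model in Theorem \ref{thm:first_fraenkel_unique}), but it has two genuine gaps for general $G$. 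First, the poset cannot be ``the finite partial injections from an ordinal into $\mathbb{A}$''. The conditions must be exactly the finite subfunctions of $G$-translates of restrictions of the chosen enumeration $e:\kappa\rightarrow\mathbb{A}$, which is the orbit-coherence the paper encodes in hypotheses \eqref{hyp:transfer_down_orbits}--\eqref{hyp:transfer_down_extensions}. With all finite partial injections admitted, your filter $H_e=\left\{p:p\subseteq e\right\}$ is provably not generic once $G\neq Sym(\mathbb{A})$: for $G=Aut(\left\langle\mathbb{Q},\leq\right\rangle)$, the set $D$ of conditions $q$ that are \emph{not} of the form $\sigma\circ e\restriction dom(q)$ for any $\sigma\in G$ is $G$-invariant, hence lies in $\mathcal{N}$; it is dense in your poset (any realizable condition can be extended by one new value placed in an order-interval other than the one $e$ dictates); and it is disjoint from $H_e$. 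Your remark that for a general filter ``the condition size is controlled by $\mathcal{F}$'' repairs the filter side but not the group side, which is where the problem lies.

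Second, and more seriously, almost-homogeneity must be witnessed by automorphisms of $\mathbb{P}$ that lie in $\mathcal{N}$ --- that is what Definition \ref{def:almost_homogeneous} means for a forcing notion of $\mathcal{N}$, and it is exactly what the paper demands in hypothesis \eqref{hyp:transfer_down_almost-hom}. Your proposed witnesses, the post-composition action of $G$ on conditions, live in $\mathcal{M}$ but almost never in $\mathcal{N}$: an element of $G$ lies in $\mathcal{N}$ only if it is finitely supported under the conjugation action, which cuts $Sym(\mathbb{A})$ down to the finitary permutations and cuts $Aut(\left\langle\mathbb{Q},\leq\right\rangle)$ down to the identity alone; so the verification ``directly from this action'' cannot be carried out in the model where it is needed. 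The paper's two workarounds show what is actually required: either $G\cap\mathcal{N}$ is dense in $G$ (Lemma \ref{lm:sufficient_group}; true for $Sym(\mathbb{A})$, false for $Aut(\left\langle\mathbb{Q},\leq\right\rangle)$), or one acts on the index side by the conjugated group $j^{-1}G_{\mathbb{B}}j\leqslant Sym(\kappa)$, whose elements are pure sets and hence automatically in $\mathcal{N}$ (Theorem \ref{thm:transfer_down_sub}). Finally, your converse direction (``generic extension $\Rightarrow$ permutation model'') is a restatement of the goal rather than a plan: no candidate $(\mathcal{G},\mathcal{F})$ is specified, and the two substantive obstacles --- realising $\mathcal{G}$ and $\mathcal{F}$ as \emph{sets} of $\mathcal{M}$ even though their natural definitions quantify over the class $\mathcal{N}$, and proving $PM(\mathcal{M},\mathcal{G},\mathcal{F})\subseteq\mathcal{N}$ by a computation with names in which almost-homogeneity is genuinely used rather than invoked as ``rigidity'' --- are left untouched.
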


\subsubsection{Forcing}
We assume familiarity with standard forcing techniques. Detail and intuition can be found in \cite{jech_set_2013} and detail for $ZFA$ in \cite{blass_freyds_1989}.
\begin{definition}
\label{def:forcing_poset}
    $\left(\mathbb{P},\leq,\mathbf{1}\right)$ is a forcing poset if:
    \begin{itemize}
        \item $\leq$ is a preorder on $\mathbb{P}$;
        \item $\mathbf{1}\in\mathbb{P}$ is the largest element; and
        \item for all $p\in\mathbb{P}$, there exist $q,r\in\mathbb{P}$ with $q,r\leq p$ and for no $s\in\mathbb{P}$ is $s\leq q$ and $s\leq r$.
    \end{itemize}
    Elements of $\mathbb{P}$ are forcing conditions.
\end{definition}
\begin{definition}
    For $p,q\in\mathbb{P}$ we say:
\begin{align*}
    &p\parallel q\\
    \text{iff }&\text{there exists }r\in\mathbb{P}\text{ with }r\leq p,q
\end{align*}
i.e. `$p,q$ are compatible'.
\end{definition}
\begin{definition}
\label{def:p-names}
    Let $\left(\mathbb{P},\leq,\mathbf{1}\right)$ be a forcing poset. The class of $\mathbb{P}$-names, $\mathcal{V}^{(\mathbb{P})}$, is defined by transfinite recursion to contain all sets $A\subseteq \mathcal{V}^{(\mathbb{P})}\times \mathbb{P}$.
    $\mathbb{P}$-names are denoted by a dot e.g. $\dot x$.
    \newline We give a name to each element of the universe by recursion:
    \begin{equation*}
        \hat{x}:=\left\{\left\langle \hat y,\mathbf{1}\right\rangle:y\in x\right\}
    \end{equation*}
\end{definition}
\begin{definition}
\label{def:forcing_relation}
    We construct the forcing relation, $\Vdash$, from the forcing conditions to formulae in the language of forcing (this includes a $\in$ relation and allows reference to $\mathbb{P}$-names and a name for the ground model).
    \newline If $p$ is stronger than $q$ ($p\leq q$) and $q\Vdash \phi$, then $p\Vdash \phi$.
    \newline If $\mathcal{V}\models\phi$ then $\mathbf{1}\Vdash \phi^{\dot{\mathcal{V}}}$.
    \newline For $p\in\mathbb{P}$, the formulae forced by $p$ are deductively closed and consistent.
\end{definition}
\begin{theorem}
    If $\left(\mathbb{P},\leq,\mathbf{1}\right)$ is a forcing poset in $\mathcal{V}\models ZF(C)$, then:
    \begin{equation*}
        \mathbf{1}\Vdash ZF(C)
    \end{equation*}
\end{theorem}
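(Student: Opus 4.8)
The plan is to verify that $\mathbf{1}$ forces each axiom of $ZF(C)$ in turn, appealing throughout to the two standard lemmas of forcing established in the cited references: the \emph{Definability Lemma}, that the relation $p\Vdash\phi$ is definable in $\mathcal{V}$; and the \emph{Truth Lemma}, that $\mathcal{V}[G]\models\phi$ iff some $p\in G$ forces $\phi$. By the Truth Lemma, to show $\mathbf{1}\Vdash\phi$ it suffices to show $\mathcal{V}[G]\models\phi$ for every generic $G$, where $\mathcal{V}[G]=\{\dot{x}^G:\dot{x}\in\mathcal{V}^{(\mathbb{P})}\}$ is the evaluation of the names under $G$. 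I would then treat the axioms in two groups: those settled by explicit name constructions, and the three that require a rank-bounding argument in the ground model.

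First I would dispatch the easy axioms. \textbf{Extensionality} and \textbf{Foundation} hold because $\mathcal{V}[G]$ is a transitive, well-founded class: the names are built by transfinite recursion, so the evaluation map respects membership and $\in$-induction transfers. \textbf{Pairing} and \textbf{Union} are witnessed by explicit names — given $\dot{x},\dot{y}$, the name $\{\langle\dot{x},\mathbf{1}\rangle,\langle\dot{y},\mathbf{1}\rangle\}$ evaluates to the pair, and a union over the second-level entries of $\dot{x}$ names $\bigcup\dot{x}^G$. \textbf{Infinity} is witnessed by the canonical name $\hat{\omega}$, using that $\mathbf{1}\Vdash\phi^{\dot{\mathcal{V}}}$ whenever $\mathcal{V}\models\phi$ (from the setup of $\Vdash$).

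The substantive work is in \textbf{Separation}, \textbf{Power Set}, and \textbf{Replacement}, all resting on the Definability Lemma together with a bounding argument carried out in $\mathcal{V}$. For \textbf{Separation} from a formula $\psi$ with parameters, given $\dot{x}$ I would form the name $\{\langle\dot{y},p\rangle:\langle\dot{y},q\rangle\in\dot{x}\text{ for some }q,\ p\leq q,\ p\Vdash\psi(\dot{y},\dots)\}$; this is a set by Definability and Separation in $\mathcal{V}$, and the Truth Lemma shows it evaluates to the required subset. For \textbf{Power Set}, the key observation is that every subset of $\dot{x}^G$ in $\mathcal{V}[G]$ is named by a ``nice name'' built from the names in the domain of $\dot{x}$ paired with antichains of conditions; these nice names form a set $P$ in $\mathcal{V}$ by bounding their rank, and $\{\langle\dot{z},\mathbf{1}\rangle:\dot{z}\in P\}$ names a superset of $\mathcal{P}(\dot{x}^G)$, which Separation then cuts down. \textbf{Replacement} is analogous but harder: given a functional formula, I would use Replacement and Reflection in $\mathcal{V}$ to find a single ordinal $\alpha$ such that every forced image has a name in $\mathcal{V}^{(\mathbb{P})}\cap V_\alpha$, then collect this set into one name for a superset of the range. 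The main obstacle is precisely this last bounding step: showing that a \emph{single} ground-model ordinal captures all the images simultaneously, which is exactly where the interplay of Definability, Replacement and Reflection is essential.

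Finally, for the optional \textbf{Choice}, if $\mathcal{V}\models AC$ I would fix a well-ordering of $\mathcal{V}^{(\mathbb{P})}$ in the ground model and use it to well-order the evaluations, yielding a choice function in $\mathcal{V}[G]$; together with the $ZF$ case this gives $\mathbf{1}\Vdash ZFC$. Assembling the forcing statements for every axiom yields $\mathbf{1}\Vdash ZF(C)$.
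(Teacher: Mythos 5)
The paper never proves this theorem: it is stated as standard background in the forcing recap of Section~\ref{sec:Almost-homogeneous_forcing}, with the proof deferred to the cited references (Jech for $ZF(C)$, and Blass's treatment for the $ZFA$ analogue). Your sketch is essentially the standard textbook argument from exactly those sources — Truth and Definability Lemmas, explicit names for Pairing, Union and Infinity, the canonical Separation name, nice names for Power Set, and Replacement plus Reflection in the ground model to bound the names for Replacement — and it is correct in outline. Two points deserve care in a full write-up. First, in the Choice case you cannot literally ``fix a well-ordering of $\mathcal{V}^{(\mathbb{P})}$'', since the class of names is proper; the standard fix is local: given $\dot x$, use $AC$ in $\mathcal{V}$ to enumerate $dom(\dot x)$ by an ordinal $\beta$, so that in the extension the evaluation map is a surjection from $\beta$ onto a superset of $\dot x^G$, which suffices to well-order $\dot x^G$. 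Second, reducing $\mathbf{1}\Vdash\phi$ to ``$\mathcal{V}[G]\models\phi$ for every generic $G$'' presupposes either that one works with the definable (syntactic) forcing relation or that generics are available, e.g.\ over countable transitive models; generic filters over the full universe need not exist. The paper itself glosses this point (``we can abuse notation and reason in the forcing extension''), so your usage is consistent with its conventions, but a rigorous proof would verify each axiom against the definable relation $\Vdash$ directly.
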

\begin{definition}
\label{def:generic_object}
    $\Gamma$ is a generic object for $\left(\mathbb{P},\leq,\mathbf{1}\right)$ if:
    \begin{itemize}
        \item $\Gamma\subseteq \mathbb{P}$;
        \item $\mathbf{1}\in \Gamma$;
        \item for all $p\in \Gamma$ and $p\leq q$, then $q\in \Gamma$;
        \item for all $p,q\in \Gamma$, then there exists $r\in \Gamma$ with $r\leq p,q$; and
        \item if $D\in\mathcal{V}$ is dense in $\mathbb{P}$, then $\Gamma\cap D\not =\emptyset$.
        \begin{itemize}
            \item where $D$ is dense in $\mathbb{P}$ means that for all $p\in\mathbb{P}$ there exists $q\in D$ with $q\leq p$.
        \end{itemize}
    \end{itemize}
\end{definition}.
Let
\begin{equation*}
    \underline{\Gamma}:=\left\{\left\langle \hat p,p\right\rangle:p\in\mathbb{P}\right\}
\end{equation*}
then
\begin{equation*}
    \mathbf{1}\Vdash \underline{\Gamma}\text{ is a generic object for }\mathbb{P}
\end{equation*}
We can abuse notation and reason in the forcing extension $\mathcal{V}[\Gamma]$, which is the universe where we have added some generic object, $\Gamma$, to $\mathcal{V}$. $\mathcal{V}[\Gamma]$ will satisfy the formulae forced by $\mathbf{1}$.
We can now give interpretations for names recursively:
\begin{equation*}
    Val_\Gamma(u):=\left\{Val_\Gamma(v):\exists p\in G.\left\langle v,p\right\rangle\in u\right\}
\end{equation*}
For $x\in\mathcal{V}$, $Val_\Gamma(\hat x)=x$.
\newline Let us now consider forcing in models of $ZFA$.
\newline The final part in explaining Theorem \ref{thm:perm_forcing} is to define what makes a forcing poset `almost-homogeneous'.
\begin{definition}
\label{def:almost_homogeneous}
    If:
    \begin{itemize}
        \item $\mathcal{N}\models ZFA$; and 
        \item $\left(\mathbb{P},\leq,\mathbf{1}\right)$ is a forcing poset;
    \end{itemize}
    then
    \begin{align*}
        & \mathbb{P}\text{ is almost-homogeneous}\\
        \text{iff }&\text{for all }p,q\in\mathbb{P}\text{, there exists }\sigma\in Aut(\mathbb{P},\leq,\mathbf{1})\text{ s.t. }p\parallel \sigma q
    \end{align*}
\end{definition}
\subsubsection{Expressibility}

Forcing can be done in the logic of a model of $ZFA$. We can ask whether there is a notion of forcing that forces certain properties to hold, e.g. ask for the ground universe to be a permutation model of a certain shape. This means that statements about being a permutation model, induced in certain ways and with properties holding in the original universe, are expressible (assuming that $AC$ holds in the pure universe).
\newline Note that a model $\mathcal{N}\models ZFA$ could have been induced as a permutation model in multiple different universes.
\section{How to Have Less Things by Forgetting How to Count Them\protect\footnote{Title inspired by the paper `How to have more things by forgetting how to count them'\cite{karagila_how_2020}}}
\label{sec:cardinal}
\subsection{Summary}
In this section we:
\begin{itemize}
    \item Determine a list of conditions on the group and the atoms that can induce a forcing extension of the finite-support permutation model (Lemma \ref{thm:transfer_down}).
    \begin{itemize}
        \item This forcing extension will preserve the pure universe, assign a cardinality to the atoms, and have the finite-support permutation model as a finite-support permutation model.
    \end{itemize}
    \item Determine easy to check conditions guarantee Theorem \ref{thm:transfer_down}'s conditions (Theorem \ref{thm:transfer_down_sub} and Lemma \ref{lm:sufficient_group}).
    \item Prove that the first Fraenkel model is `unique' (Theorem \ref{thm:first_fraenkel_unique}).
    \begin{itemize}
        \item First Fraenkel models, on the same pure universe, are elementarily equivalent.
    \end{itemize}
    \item We prove that, under additional hypotheses, the structure inducing a finite-support permutation model stays the same after taking the forcing extension (Theorem \ref{thm:inducing_structure_after_transfer}).
    \item We determine conditions guaranteeing that a finite-support permutation model induced by some pure structure is equivalent to a finite-support permutation model induced by a different pure structure (Theorem \ref{thm:full_transfer}).
    \item We conclude with examples (Section \ref{sec:examples}).
    \begin{itemize}
        \item The finite-support permutation models induced by $\left\langle \mathbb{Q},\leq\right\rangle$ and $\left\langle \mathbb{R},\leq\right\rangle$ are elementarily equivalent, allowing reference to the inducing structures.
    \end{itemize}
\end{itemize}

\subsection{Plan}

We wish to identify the logic of different finite-support permutation models on the same pure universe. We proceed as follows:
\begin{enumerate}
    \item Use forcing to make one of the finite-support permutation models to `imagine' that it was induced as a different finite-support permutation model.
    \item By Remark \ref{rem:pm_well-defined}, this construction is unique up-to class isomorphism, so the permutation models are elementarily equivalent (allowing the inducing structure as a parameter).
\end{enumerate}

\subsection{Transfer Downwards}

There is the basic example, when using forcing, of adding a bijection between two sets of different cardinality. This is called `Cardinal Collapse'.
\newline In a finite-support permutation model, we forget the cardinality of the atoms hence, we can to collapse them to a new cardinality while maintaining `enough' structure of the universe. What this means is the following, let:
\begin{itemize}
    \item $\mathcal{M}\models ZFA+{AC}^{pure}$; and
    \item $\mathcal{N}$ be a finite-support permutation model of $\mathcal{M}$.
\end{itemize}
We wish to find a notion of forcing $\left(\mathbb{P},\leq,\mathbf{1}\right)\in\mathcal{N}$ s.t.:
\begin{itemize}
    \item $\left(\mathbb{P},\leq,\mathbf{1}\right)\in\mathcal{N}$ is almost-homogeneous;
    \item forcing preserves the pure universe (adds no pure sets); and
    \item $\mathbf{1}\Vdash \left|\hat{\mathbb{A}}\right|=\hat{\kappa}$ for some cardinal $\kappa$.
\end{itemize}

Lemma \ref{thm:transfer_down} gives us sufficient conditions for the above to hold. We will follow Lemma \ref{thm:transfer_down} with theorems that provide easier conditions to verify.

\begin{lemma}
    \label{thm:transfer_down}
    If:
    \begin{enumerate}
        \item $\left\langle\mathcal{M},\mathbb{A},\in\right\rangle\models ZFA+{AC}^{pure}$;
        \item $G\leqslant Sym(\mathbb{A})$;
        \item $\mathcal{N}=PM(\mathcal{M},G,\mathcal{F}_{fin})$;
        \item $\kappa$ is a cardinal in $\mathcal{M}$;
        \item $t_\alpha\subseteq \mathbb{A}^\alpha$ for $\alpha\in\kappa$ is s.t.:
        \begin{enumerate}
            \item \label{hyp:transfer_down_orbits}for all $\alpha\in\kappa$, $t_\alpha$ is a $G$-orbit;
            \item \label{hyp:transfer_down_injective}for all $f\in \bigcup_{\alpha\in\kappa}t_\alpha$, $f$ is injective; and
            \item \label{hyp:transfer_down_extensions}for all $\alpha\leq\beta\in\kappa$, $t_\alpha=\left\{f\restriction\alpha:f\in t_\beta\right\}$;
        \end{enumerate}
        \item $\mathbb{P}:=\bigcup_{\alpha\in\kappa,f\in t_\alpha} \mathcal{P}_{fin}(f)$ is s.t.:
        \begin{enumerate}
            \item \label{hyp:transfer_down_almost-hom}for all $ f,g\in\mathbb{P}$, there exists $\sigma\in Aut(\left(\mathbb{P},\supseteq\right))\cap \mathcal{N}$ and $h\in\mathbb{P}$ with $\sigma f,g\subseteq h$; and
            \item \label{hyp:transfer_down_extend_condition}for all $f\in\mathbb{P},a\in\mathbb{A}$, there exists $ g\in\mathbb{P}$ with $f\subseteq g$ and $a$ is in the range of $g$;
        \end{enumerate}
    \end{enumerate}
    then forcing in $\mathcal{N}$ with $\left(\mathbb{P},\supseteq,\emptyset\right)$ forces $ZFA+AC+\left|\mathbb{A}\right|=\kappa$ and $\mathcal{N}$ is a permutation model in the forcing extension.
    \newline If $\mathcal{M}\models SVC(\mathbb{A}^{<\omega})$ (Definition \ref{def:SVC}), then this is a finite-support permutation model.
\end{lemma}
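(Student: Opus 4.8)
The plan is to verify each of the promised conclusions—that forcing with $(\mathbb{P},\supseteq,\emptyset)$ in $\mathcal{N}$ collapses the atoms to cardinality $\kappa$, adds no pure sets, is almost-homogeneous, and leaves $\mathcal{N}$ as a permutation model—by applying Hall's theorem (Theorem \ref{thm:perm_forcing}) in one direction and checking its hypotheses directly. Let me sketch the steps.

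First I would argue that the generic object $\Gamma$ yields a bijection between $\mathbb{A}$ and $\kappa$. The conditions in $\mathbb{P}$ are finite partial injections drawn from the $t_\alpha$; hypothesis \eqref{hyp:transfer_down_injective} guarantees injectivity, and hypothesis \eqref{hyp:transfer_down_extend_condition} is exactly the density statement needed to show that for each atom $a\in\mathbb{A}$ the set of conditions whose range contains $a$ is dense, so genericity forces $\bigcup\Gamma$ to be surjective onto $\mathbb{A}$. Running the analogous density argument on the domain side (every ordinal below $\kappa$ gets hit, using \eqref{hyp:transfer_down_extensions} to extend the domain coherently) gives a bijection $\kappa\leftrightarrow\mathbb{A}$, whence $\mathbf{1}\Vdash |\hat{\mathbb{A}}|=\hat\kappa$. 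Since $\mathbb{A}$ is now in bijection with an ordinal and the ground pure universe satisfies $AC^{pure}$, the extension satisfies full $AC$.

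Next I would establish that forcing adds no pure sets and that $\mathbb{P}$ is almost-homogeneous. For the latter, hypothesis \eqref{hyp:transfer_down_almost-hom} is essentially Definition \ref{def:almost_homogeneous} rephrased: given $f,g\in\mathbb{P}$ it provides $\sigma\in Aut(\mathbb{P},\supseteq)\cap\mathcal{N}$ and a common extension $h\supseteq \sigma f, g$, so $\sigma f\parallel g$ (recalling that the order is $\supseteq$, so stronger means larger). That $\mathbb{P}\in\mathcal{N}$ must be checked: each $\mathcal{P}_{fin}(f)$ is supported by the finite tuple enumerating $f$, and $\mathbb{P}$ is a $G$-equivariant union by \eqref{hyp:transfer_down_orbits}, so $\mathbb{P}$ is equivariant and hereditarily finitely supported. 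For no new pure sets, I would observe that $\mathbb{P}$ consists of finite objects indexed by atoms, so a nice-name/antichain counting argument (using $AC^{pure}$ in the ground model) shows every pure set in the extension already has a name realised purely, hence lies in $\mathcal{N}$—equivalently, the pure part is unaffected because all the genericity is concentrated on the atoms.

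With $AC$, transitivity, and the shared pure-sets-and-atoms hypotheses in place, Hall's theorem (Theorem \ref{thm:perm_forcing}) applies in the reverse direction to conclude that $\mathcal{N}$ is a permutation model of the forcing extension $\mathcal{N}[\Gamma]$. The final clause—that under $\mathcal{M}\models SVC(\mathbb{A}^{<\omega})$ this is a \emph{finite-support} permutation model—follows from Corollary \ref{thm:finite_support_fo_improved}: the property $SVC(\mathbb{A}^{<\omega})$ holds in $\mathcal{N}$ (it is inherited from $\mathcal{M}$ via that corollary's first bullet), and since $SVC(\mathbb{A}^{<\omega})$ is absolute in the sense that $\mathbb{A}^{<\omega}$ is unchanged, the second bullet of Corollary \ref{thm:finite_support_fo_improved} then forces $\mathcal{N}$ to be a finite-support permutation model of the extension. \textbf{The main obstacle} I anticipate is the no-new-pure-sets verification: ensuring that the antichain/nice-name argument goes through with only $AC^{pure}$ rather than full choice in $\mathcal{N}$, since $\mathcal{N}$ itself fails $AC$, requires care that the relevant antichains of $\mathbb{P}$ can be well-ordered using only the pure structure and the finite supports, rather than an arbitrary global choice on the atoms.
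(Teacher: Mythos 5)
Your overall architecture matches the paper's: density arguments via \eqref{hyp:transfer_down_extensions} and \eqref{hyp:transfer_down_extend_condition} for the generic bijection $\kappa\leftrightarrow\mathbb{A}$ (injectivity from \eqref{hyp:transfer_down_injective}), reading hypothesis \eqref{hyp:transfer_down_almost-hom} as almost-homogeneity, equivariance of $\mathbb{P}$ to get $\mathbb{P}\in\mathcal{N}$, Hall's Theorem \ref{thm:perm_forcing} to conclude that $\mathcal{N}$ is a permutation model of $\mathcal{N}[\Gamma]$, and Corollary \ref{thm:finite_support_fo_improved} (both bullets) for the finite-support clause. The gap is in the one step you flagged, but the obstacle is not the one you anticipate. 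A nice-name/antichain counting argument cannot establish that no pure sets are added, no matter how much choice is available: counting nice names bounds the \emph{number} of subsets of a pure set $X$ in the extension; it does not show that each such subset already lies in $\mathcal{N}$. The purely set-theoretic analogue of this poset --- finite-partial-function collapse forcing --- demonstrably adds new sets of ordinals (new reals), and a counting argument is blind to the only feature distinguishing $\mathbb{P}$ from that case, namely the symmetry of the conditions under the group acting on the atoms. So refining the antichain argument to work with only $AC^{pure}$, which is the difficulty you name, would not save it.

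The paper's proof of this step is a symmetry argument, and it is where hypotheses \eqref{hyp:transfer_down_orbits} and \eqref{hyp:transfer_down_extensions} do their real work (your sketch uses \eqref{hyp:transfer_down_orbits} only for equivariance of $\mathbb{P}$). Given $p\Vdash\dot S\subseteq\hat X$ with $X$ pure, first use \eqref{hyp:transfer_down_extend_condition} to strengthen $p$ to $p'$ whose range contains a finite support of the name $\dot S$ (which exists since $\dot S\in\mathcal{N}$). If $q_0,q_1\supseteq p'$ decided $\hat x\in\dot S$ oppositely, extend them to $q_0',q_1'\in t_\alpha$ for a common $\alpha$ using \eqref{hyp:transfer_down_extensions}; since $t_\alpha$ is a single $G$-orbit by \eqref{hyp:transfer_down_orbits}, there is $\sigma\in G$ with $\sigma q_0'=q_1'$. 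Because $p'\subseteq q_0'$ and $p'\subseteq q_1'$ are injective partial functions with pure domains, $\sigma$ fixes $ran(p')$ pointwise, hence fixes the support of $\dot S$, hence $\sigma\dot S=\dot S$; equivariance of $\Vdash$ then gives $\sigma q_0\Vdash\hat x\in\dot S$, while $\sigma q_0$ and $q_1$ have the common extension $q_1'$ --- a contradiction. So $p'$ decides every $\hat x\in\dot S$, i.e. $p'\Vdash\dot S=\hat A$ where $A=\left\{x\in X:p'\Vdash\hat x\in\dot S\right\}\in\mathcal{N}$. Note finally that your derivation of $AC$ in $\mathcal{N}[\Gamma]$ (well-orderable atoms plus $AC^{pure}$) itself presupposes this step, since $AC^{pure}$ must survive into the extension; so the symmetry argument cannot be routed around.
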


\begin{proof}
    \begin{claim}
        $\mathbb{P}$ is equivariant and $\mathbb{P\in\mathcal{N}}$
    \end{claim}
    \begin{proof}
        $\mathbb{P}$ is equivariant by construction and for $x\in\mathbb{P}$, $x$ is hereditarily finite so $x\in\mathcal{N}$. Hence, $\mathbb{P}\in\mathcal{N}$.
    \end{proof}
    \begin{claim}
        Forcing with $\left(\mathbb{P},\supseteq,\emptyset\right)$ adds no pure sets.
    \end{claim}
    \begin{proof}
        Let:
        \begin{itemize}
            \item $\dot{S}$ be pure name;
            \item $p\in\mathbb{P}$; and
            \item $X$ be a pure set.
        \end{itemize}
        Suppose that
        \begin{equation*}
            p\Vdash \dot S \subseteq \hat X\text{ and }p\Vdash \dot S\textit{ is pure}
        \end{equation*}
        By \eqref{hyp:transfer_down_extend_condition}, we can strengthen $p$ to $p'$, supporting $\dot S$.
        \newline Let $x\in X$.
        \newline Suppose that there exists $q_0,q_1\in \mathbb{P}$ with:
        \begin{itemize}
            \item $p'\subseteq q_0,q_1$;
            \item $q_o \Vdash \hat x \in \dot S$; and
            \item $q_1 \Vdash \hat x \notin \dot S$.
        \end{itemize}
        Let:
        \begin{equation*}
            \alpha:=max(dom(q_0),dom(q_1))
        \end{equation*}
        There exists $q_0',q_1'\in t_\alpha$ with $q_0\subseteq q_0'$ and $q_1 \subseteq q_1'$. There exists $\sigma\in G$ with $\sigma q_0'=q_1'$. Thus, $\sigma q_0,q_1\subseteq q_1'$ and $\sigma p'=p'$, as $p'$ is common to $q_0$ and $q_1'$. Now, $\sigma q_0 \Vdash \hat x \in \dot S$, as $\Vdash$ is equivariant and $q_1 \Vdash \hat x \notin \dot S$. This is a contradiction as they have a common strengthening.
        \newline Let:
        \begin{equation*}
            A:=\left\{x\in X:p'\Vdash \hat x\in \dot S \right\}
        \end{equation*}Every extension of $p'$ agrees on the elements of $\dot S$ so:
        \begin{equation*}
            p'\Vdash \dot {S}=\hat A\in \hat{\mathcal{N}}
        \end{equation*}
        Every forcing condition, $p'$, strengthening $p$ to support $\dot S$ has $p'\Vdash \dot{S} \in \hat{\mathcal{N}}$ so $p\Vdash \dot S\in\hat{\mathcal{N}}$. So
        \begin{equation*}
            \emptyset\Vdash \textit{if }X\in \hat{\mathcal{N}}\textit{ and }\dot S\subseteq X\textit{, then }\dot S \in \hat{\mathcal{N}}
        \end{equation*}
        \newline So the generic extension of $\mathcal{N}$ given by $\left(\mathbb{P},\supseteq, \emptyset\right)$ has the same pure sets as $\mathcal{N}$.
        \newline Formally, we use well-foundedness to pick $\dot S$ of minimum rank that can be forced to be a new pure set. $\dot S$ must appear as a subset of a pure set of lower rank, which must already exist, leading to a contradiction. However, this is omitted from the proof as this is standard (See \cite{hall_generic_2000} for examples).
    \end{proof}
    \begin{claim}
        Forcing with $\left(\mathbb{P},\supseteq,\emptyset\right)$ adds a bijection $f:\kappa\rightarrow\mathbb{A}$.
    \end{claim}
    \begin{proof}
        Let:
        \begin{equation}\label{eq:forcing_bijection}
            \dot f := \left\{\left\langle\hat q,p\right\rangle|p\in\mathbb{P}\text{ and }p=\left\{q\right\}\right\}
        \end{equation}
        We show that $\emptyset\Vdash\dot f \text{ is a bijection }\hat \kappa \rightarrow \hat {\mathbb{A}}$.
        \newline Let $\Gamma\subseteq\mathbb{P}$ be generic and we work in $\mathcal{N}[\Gamma]$. Let:
        \begin{equation*}
            f:=Val_{\Gamma}\left(\dot f\right)
        \end{equation*}
        By \eqref{eq:forcing_bijection},
        \begin{equation*}
            f\subseteq\kappa\times\mathbb{A}
        \end{equation*}
        Let $\alpha\in \kappa$ and let:
        \begin{equation*}
            D_\alpha:=\left\{p\in\mathbb{P}:\alpha\in dom(p)\right\}
        \end{equation*}
        We show that $D_\alpha$ is dense (Definition \ref{def:generic_object}). Let $h\in\mathbb{P}$ so there exists $\beta\in\kappa$ and $h'\in t_\beta$ with $h\subseteq h'$. Hence, there exists $h''\in t_{\left(\alpha+1\right)\cup\beta}$ with $h'\subseteq h''$ by \eqref{hyp:transfer_down_extensions}. Now, $h\subseteq h''\restriction\left({dom(h)\cup\left\{\alpha\right\}}\right)$ and $h''\restriction\left({dom(h)\cup\left\{\alpha\right\}}\right)\in D_\alpha$. So $D_\alpha$ is dense.
        \newline So there is $p\in D_\alpha\cap\Gamma$, so $\left\langle \alpha,p\left(\alpha\right)\right\rangle\in f$.
        This holds for all $\alpha\in\kappa$ so:
        \begin{equation*}
            dom(f)=\kappa
        \end{equation*}
        Let $\alpha\in\kappa$ and let:
        \begin{equation*}
            D_\alpha:=\left\{p\in \mathbb{P}:a\in ran(p)\right\}
        \end{equation*}
        $D$ is dense as for $h\in \mathbb{P}$, there is $h'\in\mathbb{P}$ s.t. $h\subseteq h'\in D_\alpha$, by \eqref{hyp:transfer_down_extend_condition}.
        So there exists $p\in D_\alpha \cap \Gamma$, so $\left\langle p^{-1}\left(a\right),a\right\rangle\in f$. This holds for all $a\in A$ so:
        \begin{equation*}
            ran(f)=\mathbb{A}
        \end{equation*}
        If $p\in\mathbb{P}$ strengthens both $\left\{\left\langle x,a\right\rangle\right\}\in\Gamma$ and $\left\{\left\langle y,b\right\rangle\right\}\in\Gamma$ then, as $p$ is an injective function \eqref{hyp:transfer_down_injective}, $\left(x=y\text{ and }a=b\right)$ or $\left(x\neq y\text{ and }a\neq b\right)$.
        All elements of $\Gamma$ are comparable, so $f$ is an injective function.
        \newline Putting all of these together, $f:\kappa\leftrightarrow\mathbb{A}$ is a bijection.
    \end{proof}
    Now for all $\Gamma\subseteq\mathbb{P}$ generic:
    \begin{itemize}
        \item $\mathcal{N}[\Gamma]$ contains the same pure sets as $\mathcal{N}$;
        \item $\mathcal{N}[\Gamma]$ is an almost-homogeneous extension of $\mathcal{N}$; and
        \item $\mathcal{N}[\Gamma]$ contains a bijection $\kappa\rightarrow\mathbb{A}$, so satisfies AC.
    \end{itemize}
    So, by Theorem \ref{thm:perm_forcing}, $\mathcal{N}$ is a permutation model of $\mathcal{N}[\Gamma]$.
    \newline If $\mathcal{M}\models SVC(\mathbb{A}^{<\omega})$ then $\mathcal{N}\models SVC(\mathbb{A}^{<\omega})$, by Theorem \ref{thm:finite_support_fo_improved}, so $\mathcal{N}$ is a finite-support permutation model of $\mathcal{N}[\Gamma]$, by Theorem \ref{thm:finite_support_fo_improved}.
\end{proof}

\subsubsection{Simplifying Conditions}

Lemma \ref{thm:transfer_down} has many conditions. The most concerning is that $\mathbb{P}$ is almost-homogeneous \eqref{hyp:transfer_down_almost-hom}, which is the only property internal to $\mathcal{N}$.
\newline Ideally we want simpler conditions, and for \eqref{hyp:transfer_down_almost-hom} to be easier to check.
\newline We have two ways to satisfy condition \eqref{hyp:transfer_down_almost-hom}:
\begin{enumerate}
    \item Finding permutations on $\mathbb{A}$ as in Lemma \ref{lm:sufficient_group}, that remain in $\mathcal{N}$.
    \item Finding permutations on $\kappa$ as in Corollary \ref{thm:transfer_down_sub} and inspired by the proof of Theorem \ref{thm:perm_forcing}.
\end{enumerate}

One way to ensure the conditions hold, is to have $\mathbb{A}$ be `nicely' covered by some set of size $\kappa$ with respect to the group.

\begin{theorem}
    \label{thm:transfer_down_sub}
    If
    \begin{enumerate}
        \item $\left\langle\mathcal{M},\mathbb{A},\in\right\rangle\models ZFA+{AC}^{pure}$;
        \item $G\leqslant Sym(\mathbb{A})$;
        \item $\mathcal{N}=PM(\mathcal{M},G,\mathcal{F}_{fin})$;
        \item $\kappa$ is a cardinal;
        \item $\mathbb{B}\subseteq\mathbb{A}$ with:
        \begin{enumerate}
            \item \label{hyp:std_1} $\left|\mathbb{B}\right| = \kappa$;
            \item \label{hyp:std_2} for all $m\in\mathbb{N}\text{, }\mathbf{a},\mathbf{b}\in\mathbb{B}^m$, and $\sigma'\in G$, if $\sigma'\mathbf{a}=\mathbf{b}$ then there exists $\sigma\in G_\mathbb{B}$ with $\sigma\mathbf{a}=\mathbf{b}$; and
            \item \label{hyp:std_3}for all $\mathcal{S}\subseteq_{fin}\mathbb{B}$ and $\mathcal{S'}\subseteq_{fin}\mathbb{A}$ there exists $\sigma\in G$ s.t. for all $b\in\mathcal{S}$, $\sigma b=b$ $\sigma\left(\mathcal{S}'\right)\subseteq\mathbb{B}$;
        \end{enumerate}
    \end{enumerate}
    then $\mathcal{N}$ has a forcing extension satisfying $ZFA+AC+\left|\mathbb{A}\right|=\kappa$ and $\mathcal{N}$ is a permutation model in this forcing extension.
    \newline If $\mathcal{M}\models SVC(\mathbb{A}^{<\omega})$, then this is a finite-support permutation model.
\end{theorem}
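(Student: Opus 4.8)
The plan is to derive the hypotheses of Lemma \ref{thm:transfer_down} from the simpler hypotheses \eqref{hyp:std_1}--\eqref{hyp:std_3} and then quote that lemma verbatim; its conclusion and its $SVC$ addendum are exactly those claimed here, so this suffices. First I would fix a bijection witnessing \eqref{hyp:std_1}, enumerating $\mathbb{B}=\{b_\gamma:\gamma<\kappa\}$ (this exists in $\mathcal{M}$ since $|\mathbb{B}|=\kappa$), and set $t_\alpha:=G\cdot\{(b_\gamma)_{\gamma<\alpha}\}\subseteq\mathbb{A}^\alpha$, the $G$-orbit of the $\alpha$-initial segment of the enumeration. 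Conditions \eqref{hyp:transfer_down_orbits}--\eqref{hyp:transfer_down_extensions} are then immediate: $t_\alpha$ is a $G$-orbit by construction; every element is injective because the $b_\gamma$ are distinct and elements of $G$ are bijections; and $\{f\restriction\alpha:f\in t_\beta\}=t_\alpha$ for $\alpha\le\beta$ because restriction commutes with the coordinatewise $G$-action, so the restriction of the orbit is the orbit of the restriction.

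With $\mathbb{P}:=\bigcup_{\alpha\in\kappa,f\in t_\alpha}\mathcal{P}_{fin}(f)$ as in Lemma \ref{thm:transfer_down}, a condition $p\in\mathbb{P}$ is a finite partial injection $\kappa\rightharpoonup\mathbb{A}$ whose value-tuple $(p(\gamma))_{\gamma\in dom(p)}$ lies in the $G$-orbit of $(b_\gamma)_{\gamma\in dom(p)}$; fix a witness $\sigma_p\in G$ with $p(\gamma)=\sigma_p(b_\gamma)$. The extension condition \eqref{hyp:transfer_down_extend_condition} then follows from \eqref{hyp:std_3}: given $p$ and $a\in\mathbb{A}\setminus ran(p)$, apply \eqref{hyp:std_3} with $\mathcal{S}=\{b_\gamma:\gamma\in dom(p)\}$ and $\mathcal{S}'=\{\sigma_p^{-1}(a)\}$ to obtain $\nu\in G$ fixing $\mathcal{S}$ pointwise with $\nu(\sigma_p^{-1}(a))=b_\delta\in\mathbb{B}$; one checks $\delta\notin dom(p)$ (else $a\in ran(p)$) and that $p\cup\{(\delta,a)\}$, witnessed by $\sigma_p\nu^{-1}$, lies in $\mathbb{P}$.

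The main obstacle is the almost-homogeneity condition \eqref{hyp:transfer_down_almost-hom}, the only requirement internal to $\mathcal{N}$. My approach is to produce poset automorphisms from reindexings of the domain $\kappa$ rather than from the $G$-action on ranges, which need not be finitely supported. For a permutation $\tau$ of $\kappa$, set $\Phi_\tau(p):=\{(\tau\gamma,p(\gamma)):\gamma\in dom(p)\}$; a short computation shows that $\Phi_\tau$ is fixed by every $\rho\in G$, since domain indices are pure and hence $\rho$-fixed while $\Phi_\tau$ reindexes domains and commutes with the action of $\rho$ on ranges, so $\Phi_\tau\in\mathcal{N}$ whenever it is a well-defined bijection of $\mathbb{P}$, which happens exactly when $\tau$ preserves the $G$-type of every finite tuple from the enumeration. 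The role of \eqref{hyp:std_2} is precisely to supply such $\tau$: any $G$-conjugacy between two $\mathbb{B}$-tuples is realised inside the setwise stabiliser $G_\mathbb{B}$, so a $G_\mathbb{B}$-element induces a globally type-preserving reindexing of $\kappa$. Given $f,g\in\mathbb{P}$, I would use \eqref{hyp:std_3} to slide the values of $f$ into $\mathbb{B}$ at fresh indices while fixing the $\mathbb{B}$-part of $g$, read off the resulting partial reindexing of $dom(f)$, extend it to a global type-preserving $\tau$ via a $G_\mathbb{B}$-element from \eqref{hyp:std_2}, and check that $\Phi_\tau f\cup g$ is a single injective condition of $\mathbb{P}$ (witnessed by one $\sigma=\sigma_g\nu^{-1}\in G$), giving the common extension $h$. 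The delicate points are the freshness and injectivity bookkeeping when values of $f$ and $g$ collide, which is a finite case analysis, and the verification that the partial reindexing genuinely extends to an automorphism of all of $\mathbb{P}$ and not merely of the two given conditions. Finally I would invoke Lemma \ref{thm:transfer_down} to conclude.
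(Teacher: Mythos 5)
Your proposal is correct and takes essentially the same route as the paper's own proof: the same orbits $t_\alpha:=G\cdot(j\restriction\alpha)$ of initial segments of an enumeration of $\mathbb{B}$, the same use of \eqref{hyp:std_3} for the extension condition, and the same central device for almost-homogeneity, namely automorphisms of $\mathbb{P}$ that reindex domains along the permutations $j^{-1}\circ\pi\circ j$ of $\kappa$ with $\pi\in G_\mathbb{B}$ supplied by \eqref{hyp:std_2}, which are pure and hence lie in $\mathcal{N}$. The only divergence is bookkeeping in the homogeneity step: the paper uses \eqref{hyp:std_3} with empty fixed part to move both ranges into $\mathbb{B}$ and reindexes $f$ and $g$ separately into a common condition, whereas you fix $g$ and reindex only $f$ via the pointwise-fixing clause of \eqref{hyp:std_3}; this variant also goes through, and the collision worries you flag resolve automatically, since a $\nu$ fixing $j[dom(g)]$ pointwise forces any value shared by $f$ and $g$ to be sent exactly to the index $g$ already assigns it.
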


\begin{proof}
    Let $j:\kappa\leftrightarrow\mathbb{B}$ be a bijection.
    For $\beta\in\kappa$, let:
    \begin{equation}\label{eq:def_of_type_forcing}
        t_\beta:=G\cdot (j\restriction{\beta})
    \end{equation}
    Now \eqref{hyp:transfer_down_orbits}, \eqref{hyp:transfer_down_injective}, and \eqref{hyp:transfer_down_extensions} of Lemma \ref{thm:transfer_down} hold.
    \newline Let $a\in\mathbb{A}$, $f\in\mathbb{P}$, and $S:= dom( f)$. By \eqref{eq:def_of_type_forcing}, there exists $\pi\in G$ with $f=\pi(j\restriction S)$.
    By \eqref{hyp:std_3}, there exists $\sigma\in G$ s.t. for $x\in j(S)$, $\sigma x=x$ and $\sigma\pi^{-1}a\in \mathbb{B}$. Define:
    \begin{equation}\label{eq:extend_f_forcing}
        f':=\pi(\sigma^{-1}(j\restriction\left({S\cup\left\{\sigma(j^{-1}(\pi^{-1}(a)))\right\}}\right)))
    \end{equation}
    So that $f\subseteq f'\in\mathbb{P}$ and $a\in ran(f')$.
    \newline So \eqref{hyp:transfer_down_extend_condition} of Lemma \ref{thm:transfer_down} holds.
    \newline
    For $\pi\in G_\mathbb{B}$, define:
    \begin{equation*}\label{eq:conjugate_group_element}
        \pi_\kappa=j^{-1}\circ\pi\circ j
    \end{equation*}
    Define
    \begin{equation*}\label{eq:conjugate_group}
        G_{\kappa}=\left\{\pi_{\kappa} :\pi \in G\right\}\in\mathcal{V}\subseteq\mathcal{N}
    \end{equation*}
    We can define an action of $G_\kappa$ on $\mathbb{P}$ by:
    \begin{equation*}\label{eq:conjugate_group_action}
        \pi p:=p\circ\pi^{-1}
    \end{equation*}
    To show that this is a group action, it suffices to show that for all $\pi\in G_\kappa$ and $p\in\mathbb{P}$, $\pi p\in\mathbb{P}$.
    \newline Let $\pi\in G_\mathbb{B}$ and $p\in\mathbb{P}$. There exists $\sigma\in G$ and $S\subseteq_{fin}\kappa$ with $p=\sigma\circ j\restriction S$ by \eqref{eq:def_of_type_forcing}. We calculate:
    \begin{align*}
        \pi_\kappa p&=\sigma\circ \left(j\restriction S\right)\circ \pi_\kappa^{-1}\\
        &=\sigma\circ\left(j\restriction S\right)\circ j^{-1}\circ\pi^{-1}\circ j\\
        &=\left(\sigma\circ\pi^{-1}\right)\circ\left(j\restriction\left(\left(j^{-1}\circ \pi \circ j\right)[S]\right)\right)\\
        &\in\mathbb{P}
    \end{align*}
    \begin{claim}
        Under this group action, $G_\kappa$ consists of automorphisms of $\left(\mathbb{P},\supseteq,\emptyset\right)$.
    \end{claim}
    \begin{proof}
        Let $f,g\in\mathbb{P}$ and $\pi\in G_\kappa$. If $f\subseteq g$ then $f\circ \pi^{-1}\subseteq g\circ\pi^{-1}$ so $\pi_\kappa f\subseteq\pi_\kappa g$.
        \newline Also, $\pi_\kappa\emptyset=\emptyset\circ \pi^{-1}=\emptyset$.
    \end{proof}
    \begin{claim}
        $\left(\mathbb{P},\supseteq,\emptyset\right)$ is almost-homogeneous.
    \end{claim}
    \begin{proof}
        Let $f,g\in\mathbb{P}$. There exists $\pi\in G$ with $\pi(ran(f)\cup ran(g))\subseteq\mathbb{B}$ by \eqref{hyp:std_3}. There exists $\sigma\in G$ and $S\subseteq_{fin}\kappa$ with $f=\sigma\circ j\restriction S$, by \eqref{eq:def_of_type_forcing}. Now $\left(\pi\circ \sigma\right)(j[S])\subseteq\mathbb{B}$, so there exists $\pi'\in G_\mathbb{B}$ with $\pi'\restriction j[S]=\pi\circ\sigma\restriction j[S]$ by \eqref{hyp:std_2}. We calculate:
        \begin{align*}
            \pi'_{\kappa} ( f) &= \sigma \circ j \circ j^{-1} \circ {\pi '}^{-1} \circ j \restriction {\left(j^{-1}\circ \pi' \circ j\right)[S]}\\
            &=\pi \circ j \restriction{S'}
        \end{align*}
        where $S'=\left(j^{-1}\circ \pi' \circ j\right)[S]\subseteq_{fin}\kappa$.
        \newline Similarly, find $\pi''_{\kappa}\in G_{\kappa}$ and $S''\subseteq_{fin}\kappa$ s.t.:
        \begin{equation*}
            \pi''_{\kappa}(g)=\pi\circ j \restriction {S''}
        \end{equation*}
        So:
        \begin{equation*}
            \pi'_{\kappa}f,\pi''_\kappa g\subseteq\pi\circ j \restriction \left({S'\cup S''}\right)\in \mathbb{P}
        \end{equation*}
        So $\left(\mathbb{P},\supseteq,\emptyset\right)$ is almost-homogeneous.
    \end{proof}
    So the conclusion of Lemma \ref{thm:transfer_down} holds.
\end{proof}

Theorem \ref{thm:transfer_down_sub} still requires us to identify a subset of $\mathbb{A}$ of known cardinality. This may be impossible e.g. if $\mathcal{N}$ is a finite-support permutation model in $\mathcal{N}'$ and $\mathcal{N}'$ is a permutation model of $\mathcal{M}$ then $\mathcal{N}'$ may believe that no subset of $\mathbb{A}$ in bijection with a cardinal. So Theorem \ref{thm:transfer_down_sub} is insufficient for identifying all first Fraenkel models.
\newline However, first Fraenkel models contain the finite permutations, which are dense in the topology of pointwise convergence. This is sufficient for $\mathbb{P}$ to be almost-homogeneous.
\begin{lemma}
    \label{lm:sufficient_group}
    Suppose that the hypotheses of Lemma \ref{thm:transfer_down} are met except \eqref{hyp:transfer_down_almost-hom}. If $\overline{G\cap\mathcal{N}}\supseteq G$ then \eqref{hyp:transfer_down_almost-hom} holds, where the closure is in the topology of pointwise convergence in $\mathcal{M}$.
\end{lemma}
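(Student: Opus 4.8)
The plan is to show that the post-composition action of $G$ on $\mathbb{P}$ is already almost-homogeneous, and then to replace the witnessing group element by a finitely supported one using the density hypothesis $\overline{G\cap\mathcal{N}}\supseteq G$. Throughout, for $\pi\in Sym(\mathbb{A})$ and a finite partial function $p:\kappa\rightharpoonup\mathbb{A}$, I write $\pi\cdot p:=\pi\circ p$ (post-composition on the atom-values). Since each $t_\alpha$ is a $G$-orbit by \eqref{hyp:transfer_down_orbits}, this action sends $\mathbb{P}$ into $\mathbb{P}$, and since $\pi$ is a bijection it preserves $\subseteq$ and hence $\supseteq$.

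First I would reduce two arbitrary conditions to a common orbit. Given $f,g\in\mathbb{P}$, choose $\beta<\kappa$ dominating $dom(f)\cup dom(g)$ and the orbit-levels at which $f,g$ first extend; by \eqref{hyp:transfer_down_extensions} every condition of $\mathbb{P}$ is a finite subset of some member of $t_\beta$ once $\beta$ is large enough, so I obtain $f^+,g^+\in t_\beta$ with $f\subseteq f^+$ and $g\subseteq g^+$. Because $t_\beta$ is a single $G$-orbit, there is $\pi\in G$ with $\pi\circ f^+=g^+$. Then $\pi\circ f\subseteq g^+$ and $g\subseteq g^+$, so $h:=(\pi\circ f)\cup g$ is a finite subset of the function $g^+\in t_\beta$, hence $h\in\mathcal{P}_{fin}(g^+)\subseteq\mathbb{P}$ with $\pi\circ f,\,g\subseteq h$. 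This proves almost-homogeneity witnessed by the possibly non-symmetric $\pi\in G$.

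The crucial step is to descend $\pi$ into $\mathcal{N}$. The value $\pi\circ f$ depends only on $\pi\restriction ran(f)$, and $ran(f)$ is finite, so $S_{\pi\restriction ran(f)}$ is a basic open neighbourhood of $\pi$ in the topology of Definition \ref{def:topology_of_point-wise_convergence}. Since $\pi\in G\subseteq\overline{G\cap\mathcal{N}}$, this neighbourhood meets $G\cap\mathcal{N}$, yielding $\pi'\in G\cap\mathcal{N}$ with $\pi'\restriction ran(f)=\pi\restriction ran(f)$, whence $\pi'\circ f=\pi\circ f\subseteq h$. I would then verify that $\sigma:p\mapsto\pi'\circ p$ is an automorphism of $(\mathbb{P},\supseteq)$ lying in $\mathcal{N}$: it is a bijection of $\mathbb{P}$ with inverse $p\mapsto(\pi')^{-1}\circ p$, it preserves $\supseteq$ and fixes $\emptyset$, and a short support computation gives $\tau(\sigma)=\widehat{\tau\pi'\tau^{-1}}$ for $\tau\in G$, so any support of $\pi'$ in $\mathcal{N}$ also supports $\sigma$; as the elements of $\mathbb{P}$ are hereditarily finitely supported, $\sigma\in\mathcal{N}$. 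Taking this $\sigma$ and $h$ verifies \eqref{hyp:transfer_down_almost-hom}.

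The main obstacle is exactly this descent. Almost-homogeneity of the raw $G$-action is a soft consequence of the orbit and restriction structure \eqref{hyp:transfer_down_orbits}, \eqref{hyp:transfer_down_extensions} together with injectivity \eqref{hyp:transfer_down_injective}, but the forcing in Lemma \ref{thm:transfer_down} needs the witnessing automorphism to be \emph{internal} to $\mathcal{N}$, and an arbitrary $\pi\in G$ need not be finitely supported. The point being exploited is that compatibility of $\pi\circ f$ with $g$ is a finitary constraint on $\pi$ — only $\pi\restriction ran(f)$ matters — so density of $G\cap\mathcal{N}$ in $G$ in the pointwise-convergence topology is precisely enough to realise it by a finitely supported permutation without perturbing $\pi\circ f$.
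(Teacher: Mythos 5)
Your proposal is correct and follows essentially the same route as the paper: use transitivity of $G$ on a common orbit level $t_\beta$ (via \eqref{hyp:transfer_down_extensions}) to find a witness $\pi\in G$ making the two conditions compatible, then use density of $G\cap\mathcal{N}$ in the pointwise-convergence topology to replace $\pi$ by some $\pi'\in G\cap\mathcal{N}$ agreeing with it on the relevant finite set of atoms. If anything, your write-up is slightly more careful than the paper's, which phrases the agreement set loosely as $S_0\cup S_1$ (sets of ordinals rather than the atoms $ran(p_0)$) and leaves implicit the check that the induced map $p\mapsto\pi'\circ p$ is an automorphism of $(\mathbb{P},\supseteq)$ lying in $\mathcal{N}$.
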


\begin{proof}
    Let $p_0,p_1\in\mathbb{P}$ so there exist $\alpha\in\kappa$, $S_0\subseteq_{fin}\alpha$, $f_0\in t_\alpha$, $\beta\in\kappa$, $S_1\subseteq_{fin}\beta$, and $f_1\in t_\beta$ with $p_0=f_0\restriction S_0$ and $p_1=f_1\restriction S_1$. By \eqref{hyp:transfer_down_extensions}, assume that $\alpha=\beta$. There exists $\sigma\in G$ with $\sigma f_0=f_1$ by \eqref{hyp:transfer_down_orbits}. As $\overline{G\cap\mathcal{N}}\supseteq G$, there exists $\sigma'\in G\cap\mathcal{N}$ with $\sigma'\restriction\left(S_0\cup S_1\right)=\sigma\restriction\left(S_0\cup S_1\right)$. As $\pi'\in G\leqslant Aut(\left(\mathbb{P,\supseteq,\emptyset}\right))$, $\sigma'p_0,p_1\subseteq f_1\restriction\left({S_0\cup S_1}\right)\in\mathbb{P}$.
    \newline So, $\left(\mathbb{P},\supseteq,\emptyset\right)$ is almost-homogeneous.
\end{proof}

\subsubsection{The First Fraenkel Model is Unique}

We can now prove that:
\begin{theorem}
    \label{thm:first_fraenkel_unique}
    For all sentences $\phi$,
    \begin{equation*}
        ZFA+{AC}^{pure}\vdash\phi^{PM(\mathcal{V}(\mathbb{A}),Sym(\mathbb{A}),\mathcal{F}^{Sym(\mathbb{A})}_{fin})}\leftrightarrow\phi^{PM(\left\langle \mathbb{N}\right\rangle)}
    \end{equation*}
    i.e. if $ZFA+{AC}^{pure}$ hold then,
    \begin{equation*}
        PM(\mathcal{V}(\mathbb{A}),Sym(\mathbb{A}),\mathcal{F}^{Sym(\mathbb{A})}_{fin})\equiv PM(\left\langle \mathbb{N}\right\rangle)
    \end{equation*}
    where $PM(\left\langle \mathbb{N}\right\rangle)$ is the class from Definition \ref{def:permutation_model_from_structure}. The definition of $PM(\left\langle \mathbb{N}\right\rangle)$ is absolute between the universe and the pure universe, so all first Fraenkel models, with the same pure universe, are elementarily equivalent.
\end{theorem}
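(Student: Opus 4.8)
The plan is to collapse the cardinality of $\mathbb{A}$ down to $\aleph_0$ by forcing over the Fraenkel model $\mathcal{N} := PM(\mathcal{V}(\mathbb{A}), Sym(\mathbb{A}), \mathcal{F}^{Sym(\mathbb{A})}_{fin})$, so that inside the extension $\mathcal{N}$ reappears as the first Fraenkel model on countably many atoms, and then to appeal to Remark \ref{rem:pm_well-defined}. Concretely, I would apply Lemma \ref{thm:transfer_down} with $G = Sym(\mathbb{A})$, $\kappa = \aleph_0$, and $t_\alpha$ the unique $Sym(\mathbb{A})$-orbit of injective $\alpha$-tuples for each finite $\alpha$, so that $\mathbb{P}$ is the poset of finite partial injections $\omega \rightharpoonup \mathbb{A}$ ordered by reverse inclusion. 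Conditions \eqref{hyp:transfer_down_orbits}--\eqref{hyp:transfer_down_extensions} and \eqref{hyp:transfer_down_extend_condition} are immediate, since $\mathbb{A}$ is infinite and $Sym(\mathbb{A})$ acts transitively on injective tuples of each finite length. The only delicate hypothesis is almost-homogeneity \eqref{hyp:transfer_down_almost-hom}, which I obtain from Lemma \ref{lm:sufficient_group}: the finitary permutations $Perm(\mathbb{A})$ lie in $\mathcal{N}$, and $\overline{Perm(\mathbb{A})} = Sym(\mathbb{A})$ in the topology of pointwise convergence (Example \ref{examples:perm_group}), so $\overline{Sym(\mathbb{A}) \cap \mathcal{N}} \supseteq \overline{Perm(\mathbb{A})} = Sym(\mathbb{A})$.

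For the finite-support upgrade I would not route through $\mathcal{M} \models SVC(\mathbb{A}^{<\omega})$, which can fail when $\mathbb{A}$ is not well-orderable, but instead verify $\mathcal{N} \models SVC(\mathbb{A}^{<\omega})$ directly: since $\mathcal{V} \models AC$ by $AC^{pure}$, every $y \in \mathcal{N}$ is hereditarily finitely supported and each of its members is pinned down by a finite support-tuple together with pure data, so well-ordering the pure codes yields an $\mathcal{N}$-surjection $\mathbb{A}^{<\omega} \times \mathbf{On} \twoheadrightarrow y$. By Lemma \ref{thm:transfer_down} and Theorem \ref{thm:perm_forcing}, forcing with $(\mathbb{P}, \supseteq, \emptyset)$ then produces an extension $\mathcal{N}[\Gamma] \models ZFA + AC + |\mathbb{A}| = \aleph_0$ with the same pure sets as $\mathcal{N}$, hence as $\mathcal{M}$, in which $\mathcal{N}$ is a permutation model; and Corollary \ref{thm:finite_support_fo_improved}, applied with ambient model $\mathcal{N}[\Gamma]$, makes $\mathcal{N}$ a finite-support permutation model of $\mathcal{N}[\Gamma]$.

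The crux is to identify this permutation model as the first Fraenkel model of $\mathcal{N}[\Gamma]$, that is $\mathcal{N} = PM(\mathcal{N}[\Gamma], Sym(\mathbb{A})^{\mathcal{N}[\Gamma]}, \mathcal{F}_{fin})$. Here I would use $Perm(\mathbb{A})$ as a bridge: it is hereditarily finite, hence the same set of permutations in $\mathcal{M}$ and in $\mathcal{N}[\Gamma]$, and the predicate \emph{``$x$ is hereditarily finitely supported with respect to $Perm(\mathbb{A})$''} is absolute between the two, since its quantifiers range over the common sets $Perm(\mathbb{A})$, $\mathbb{A}^{<\omega}$ and $TC(\{x\})$. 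By Corollary \ref{cor:permutation_model_group_closure} with Example \ref{examples:perm_group}, this predicate defines $\mathcal{N}$ inside $\mathcal{M}$ and defines the first Fraenkel model inside $\mathcal{N}[\Gamma]$, because $\overline{Perm(\mathbb{A})}$ equals the full symmetric group in each universe (both satisfy $AC$ on the pure part). Absoluteness handles every $x \in \mathcal{N}$; that no new generic set becomes hereditarily finitely supported is exactly the content of $\mathcal{N}$ being the symmetric ground of the almost-homogeneous extension (Theorem \ref{thm:perm_forcing}), so the two classes coincide. I expect this identification — pinning down the recovered group as the full symmetric group rather than some larger automorphism group acquired from the generic — to be the main obstacle, and the finitary-permutation bridge is the device that overcomes it.

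Finally I would assemble the biconditional. Truth of the relativised sentence $\phi^{\mathcal{N}}$ is absolute between $\mathcal{M}$ and $\mathcal{N}[\Gamma]$, because $\mathcal{N}$ is one fixed transitive class and $\in$ is absolute. Inside $\mathcal{N}[\Gamma]$ we have just shown $\mathcal{N} \cong PM(\langle\mathbb{N}\rangle)$ (Definition \ref{def:permutation_model_from_structure}), so $\phi^{\mathcal{N}} \leftrightarrow \phi^{PM(\langle\mathbb{N}\rangle)}$ holds there; and by Remark \ref{rem:pm_well-defined} each $\phi^{PM(\langle\mathbb{N}\rangle)}$ is equivalent to a formula relativised to the pure universe, whose truth is unchanged by the forcing since no pure sets are added. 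Chaining these equivalences back to $\mathcal{M}$ gives $ZFA + AC^{pure} \vdash \phi^{\mathcal{N}} \leftrightarrow \phi^{PM(\langle\mathbb{N}\rangle)}$, as required. Since the whole argument is carried out with the forcing relation over $\mathcal{N}$, no actual generic filter in $\mathcal{M}$ is needed.
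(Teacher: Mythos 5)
Your forcing setup (the poset of finite partial injections $\omega\rightharpoonup\mathbb{A}$ via Lemma \ref{thm:transfer_down}, with almost-homogeneity obtained from $Perm(\mathbb{A})\subseteq\mathcal{N}$, $\overline{Perm(\mathbb{A})}=Sym(\mathbb{A})$ and Lemma \ref{lm:sufficient_group}) is exactly the paper's, but your middle section has a genuine gap. You claim $\mathcal{N}\models SVC(\mathbb{A}^{<\omega})$ on the grounds that each element of a set $y\in\mathcal{N}$ is ``pinned down by a finite support-tuple together with pure data'', so that $AC^{pure}$ lets you well-order the pure codes. Under the theorem's hypotheses this coding does not exist: the only assumption is $ZFA+AC^{pure}$, so $\mathbb{A}$ need not be equinumerous with any pure set, and there is then no way to attach a \emph{pure} invariant to an element of the first Fraenkel model (Scott-style codes of the structure $(\mathbb{A}\cup TC(\{x\}),\in,\mathbf{a})$ are not pure, and their minimal-rank representatives need not even be finitely supported, so they do not live in $\mathcal{N}$). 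You correctly flag that $SVC(\mathbb{A}^{<\omega})$ may fail in $\mathcal{M}$, but your repair commits the same unsupported claim one level down; indeed, whether such choice-like principles are available from $AC^{pure}$ alone is precisely the territory of the open question the paper cites (\cite{hall_permutation_2007}). Note also that $\mathcal{N}\models SVC(\mathbb{A}^{<\omega})$ is a \emph{consequence} of the theorem being proved (it is a first-order sentence true in $PM(\left\langle\mathbb{N}\right\rangle)$), so any proof of it at this stage must do real work.

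The paper avoids this entirely, and the device it uses is also what your identification step is missing. You derive $\mathcal{N}\subseteq\mathcal{N}'$ (where $\mathcal{N}'$ is the first Fraenkel model of $\mathcal{N}[\Gamma]$) from the absoluteness of ``hereditarily finitely supported w.r.t.\ $Perm(\mathbb{A})$'', which is fine; but the converse inclusion is not ``exactly the content'' of Theorem \ref{thm:perm_forcing} — that theorem only says $\mathcal{N}=PM(\mathcal{N}[\Gamma],G,\mathcal{F})$ for \emph{some} group and filter, and says nothing about sets that are hereditarily finitely supported w.r.t.\ the full symmetric group of $\mathcal{N}[\Gamma]$. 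The missing observation, which is the paper's central trick, is minimality of first Fraenkel models: if $x$ is finitely supported w.r.t.\ the full symmetric group, then for every group $G$ and every normal filter $\mathcal{F}$ one has $G_{\mathbf{a}}\leqslant G_x$ with $G_{\mathbf{a}}\in\mathcal{F}$, so every element of a first Fraenkel model is hereditarily symmetric w.r.t.\ \emph{every} $(G,\mathcal{F})$; this is $\Pi_1$, hence downward absolute. Applied in $\mathcal{N}[\Gamma]$ it gives $\mathcal{N}'\subseteq PM(\mathcal{N}[\Gamma],G,\mathcal{F})=\mathcal{N}$ immediately, and (applied in $\mathcal{M}$, relativised to $\mathcal{N}$) it shows the only permutation model of $\mathcal{N}$ is $\mathcal{N}$ itself, which is how the paper concludes $\mathcal{N}=\mathcal{N}'\cong PM(\left\langle\mathbb{N}\right\rangle)$. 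Once you use this, your entire $SVC$/finite-support detour (including the appeal to Corollary \ref{thm:finite_support_fo_improved}) becomes unnecessary: ``permutation model'' from Lemma \ref{thm:transfer_down} suffices, and the rest of your assembly — absoluteness of $\phi^{\mathcal{N}}$, invariance of the pure universe, working with the forcing relation rather than an actual generic — goes through as in the paper.
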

\begin{proof}
    Suppose that $\mathcal{M}\models ZFA+AC^{pure}$. Let:
    \begin{align*}
        \mathcal{V}&=\mathcal{V}(\mathbb{\emptyset})\\
        \mathcal{N}&=PM(\mathcal{M},Sym(\mathbb{A}),\mathcal{F}^{Sym(\mathbb{A})}_{fin})
    \end{align*}
    Note that every element of $\mathcal{N}$ is symmetric with respect to every group and this property is $\Pi_1$ so downwards absolute (Definition \ref{def:absoluteness}). So, the only permutation model of $\mathcal{N}$ is itself.
    \begin{claim}
        \label{claim:first_fraenkel_model_satisfies_transfer_down}
        $\mathcal{N}$ satisfies the conditions of Lemma \ref{thm:transfer_down}.
    \end{claim}
    Assuming Claim \ref{claim:first_fraenkel_model_satisfies_transfer_down}, apply Lemma \ref{thm:transfer_down} on $\mathcal{N}$ to obtain a forcing extension $\mathcal{N}[\Gamma]$, where $\mathcal{N}$ is a permutation model.
    \newline Let:
    \begin{equation*}
        \mathcal{N}'=PM(\mathcal{N}[\Gamma],Sym^{\mathcal{N}[\Gamma]}(\mathbb{A}),(\mathcal{F}^{Sym(\mathbb{A})}_{fin})^{\mathcal{N}[\Gamma]})
    \end{equation*}
    $\mathcal{N}'$ is the first Fraenkel model of $\mathcal{N}[\Gamma]\models\left|\mathbb{A}\right|=\aleph_0$, so $\mathcal{N}'\cong PM(\left\langle \mathbb{N}\right\rangle)$.
    All permutation models of $\mathcal{N}[\Gamma]$ contain $\mathcal{N}'$ as a permutation model, so $\mathcal{N}=\mathcal{N}'\cong PM(\left\langle \mathbb{N}\right\rangle)$.
    \newline We finish by noting that $PM\left(\left\langle \mathbb{N}\right\rangle\right)$ is a pure class and is absolute between $\mathcal{M}$ and $\mathcal{V}$.
\end{proof}
\begin{proof}[Proof of Claim \ref{claim:first_fraenkel_model_satisfies_transfer_down}]
    For $n\in\aleph_0$, let:
    \begin{equation*}
        t_n=\left\{f\in\mathbb{A}^n:f\text{ is injective}\right\}
    \end{equation*}
    By considering finite permutations on $\mathbb{A}$, \eqref{hyp:transfer_down_orbits},\eqref{hyp:transfer_down_injective}, and \eqref{hyp:transfer_down_extensions} clearly hold.
    \newline Let $f\in\mathbb{P}$ and $a\in\mathbb{A}$, so there exists $n\in\mathbb{N}$ and $f'\in t_n$ with $f=f'\restriction dom(f)$. If $a\in ran(f')$ then $f\subseteq f'\in \mathbb{P}$, otherwise $f\subseteq f'\cup\left\{\left\langle n,a\right\rangle\right\}\in\mathbb{P}$. So \eqref{hyp:transfer_down_extend_condition} holds.
    \newline Let
    \begin{equation*}
        Perm(\mathbb{A}):=\left\{\pi\in Sym(\mathbb{A}):\pi\text{ fixes cofinite elements}\right\}
    \end{equation*}
    The elements of $Perm(\mathbb{A})$ are hereditarily finitely supported by the finite set of atoms that they don't fix, so $Perm(\mathbb{A})\subseteq\mathcal{N}$.
    In the topology of pointwise convergence on $Sym(\mathbb{A})$,
    \begin{equation*}
        \overline{Perm(\mathbb{A})}=Sym(\mathbb{A})
    \end{equation*}
    So, by Lemma \ref{lm:sufficient_group}, \eqref{hyp:transfer_down_almost-hom} holds.
\end{proof}

\subsection{Group after Transfer}
Theorem \ref{thm:first_fraenkel_unique} is specific to the first Fraenkel model. However, we want to identify more interesting finite-support permutation models.
\newline After applying Lemma $\ref{thm:transfer_down}$, we want to know how the permutation model is induced in the new universe.
If:
\begin{itemize}
    \item $\mathcal{M}\models ZFA+AC^{pure}+SVC(\mathbb{A}^{<\omega})$;
    \item $\mathcal{N}$ is a finite-support permutation model of $\mathcal{M}$;
    \item $\mathcal{N}$ is a permutation model of the forcing extension $\mathcal{N}[\Gamma]\models \left|\mathbb{A}\right|=\kappa$; and
    \item $\mathcal{V}$ is the pure universe of $\mathcal{N}$;
\end{itemize}
then
\begin{itemize}
    \item $\mathcal{N}$ is a finite-support permutation model of $\mathcal{N}[\Gamma]$ by Theorem \ref{thm:finite_support_fo_improved};
    \item $\mathcal{N}$ is induced by a structure, $\mathcal{A}\in\mathcal{N}$ on $\mathbb{A}$, in $\mathcal{M}$ by Theorem \ref{thm:adequacy_automorphism};
    \item $\mathcal{A}$ can be assumed to have pure language as $\mathcal{N}[\Gamma]\models AC$; and
    \item $\mathcal{N}[\Gamma]\models\mathcal{A}\cong\mathcal{A}^*$ for some $\mathcal{A}^*\in\mathcal{V}$ as $\mathcal{N}[\Gamma]\models\left|\mathbb{A}\right|=\kappa$;
\end{itemize}
thus, $\mathcal{N}[\Gamma]\models \mathcal{N}\cong PM(\mathcal{A}^*)$.
\newline So, Lemma $\ref{thm:transfer_down}$ gives us conditions guaranteeing that a finite-support permutation model is elementarily equivalent to one induced by a structure of cardinality $\kappa$ in the pure universe.
\begin{remark}
    By Remark \ref{rem:permutation_model_expressive} and Theorem \ref{thm:oligomorphic_pm}, this allows us to transfer results about orbit-finite constructions.
\end{remark}
\begin{corollary}
    If:
    \begin{itemize}
        \item $\mathcal{M}\models ZFA+AC^{pure}+SVC(\mathbb{A}^{<\omega})$;
        \item $\mathcal{N}$ is a finite-support permutation model of $\mathcal{M}$; and
        \item Lemma \ref{thm:transfer_down} holds with $\kappa$ on $\mathcal{N}$;
    \end{itemize}
    then there exists a structure $\mathcal{A}^*\in \mathcal{V}$ s.t. $\left|\mathcal{A}^*\right|=\kappa$ and $\mathcal{N}\equiv PM(\mathcal{A}^*)$.
\end{corollary}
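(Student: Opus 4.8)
The plan is to unwind the chain of implications displayed just before the statement into a self-contained argument, whose only genuine content is an absoluteness check at the very end. First I would invoke Lemma~\ref{thm:transfer_down}, whose hypotheses hold by the third assumption, to obtain a generic extension $\mathcal{N}[\Gamma]$ of $\mathcal{N}$ satisfying $ZFA+AC+\left|\mathbb{A}\right|=\kappa$ in which $\mathcal{N}$ is a permutation model. Since $\mathcal{M}\models SVC(\mathbb{A}^{<\omega})$, Corollary~\ref{thm:finite_support_fo_improved} yields $\mathcal{N}\models SVC(\mathbb{A}^{<\omega})$, so the final clause of Lemma~\ref{thm:transfer_down} upgrades this to: $\mathcal{N}$ is a \emph{finite-support} permutation model of $\mathcal{N}[\Gamma]$.

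Next I would manufacture the pure structure. Working inside $\mathcal{N}[\Gamma]$, Corollary~\ref{thm:adequacy_automorphism} supplies a first-order structure $\mathcal{A}$ on $\mathbb{A}$ with $\mathcal{N}=PM(\mathcal{N}[\Gamma],Aut(\mathcal{A}),\mathcal{F}^{Aut(\mathcal{A})}_{fin})$; since the predicates of $\mathcal{A}$ are built from group orbits it is equivariant, so $\mathcal{A}\in\mathcal{N}$. Using $\mathcal{N}[\Gamma]\models AC$ I well-order the language and assume it pure, and using $\mathcal{N}[\Gamma]\models\left|\mathbb{A}\right|=\kappa$ I transport $\mathcal{A}$ along a bijection $\mathbb{A}\leftrightarrow\kappa$ to a pure structure $\mathcal{A}^*\in\mathcal{V}$ with $\left|\mathcal{A}^*\right|=\kappa$ and $\mathcal{N}[\Gamma]\models\mathcal{A}\cong\mathcal{A}^*$ (here $\mathcal{V}$, the common pure universe of $\mathcal{M}$, $\mathcal{N}$ and $\mathcal{N}[\Gamma]$, is unchanged because forcing adds no pure sets and permutation models share their pure universe). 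By Remark~\ref{rem:pm_well-defined} this gives a definable class isomorphism $\mathcal{N}\cong PM(\mathcal{A}^*)$ in $\mathcal{N}[\Gamma]$, whence $\mathcal{N}[\Gamma]\models\left(\phi^{\mathcal{N}}\leftrightarrow\phi^{PM(\mathcal{A}^*)}\right)$ for every sentence $\phi$.

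The hard part is transferring this equivalence, which a priori only holds inside $\mathcal{N}[\Gamma]$, back to the ambient universe $\mathcal{M}$ where the conclusion $\mathcal{N}\equiv PM(\mathcal{A}^*)$ must be read. The key is that both relativised theories are absolute between $\mathcal{M}$ and $\mathcal{N}[\Gamma]$: the formula $\phi^{\mathcal{N}}$ has all quantifiers bounded to the fixed transitive class $\mathcal{N}$, so its truth value depends only on $\langle\mathcal{N},\in\rangle$, which is literally the same object in $\mathcal{M}$ and in $\mathcal{N}[\Gamma]$; and $\phi^{PM(\mathcal{A}^*)}$ is, by Definition~\ref{def:permutation_model_from_structure} and Remark~\ref{rem:pm_well-defined}, determined entirely by the shared pure universe $\mathcal{V}$ together with the parameter $\mathcal{A}^*\in\mathcal{V}$ (this is precisely the absoluteness already used to close Theorem~\ref{thm:first_fraenkel_unique}). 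I would therefore chain the equivalences $\mathcal{M}\models\phi^{\mathcal{N}}$ iff $\mathcal{N}[\Gamma]\models\phi^{\mathcal{N}}$ iff $\mathcal{N}[\Gamma]\models\phi^{PM(\mathcal{A}^*)}$ iff $\mathcal{M}\models\phi^{PM(\mathcal{A}^*)}$, concluding $\mathcal{M}\models\left(\phi^{\mathcal{N}}\leftrightarrow\phi^{PM(\mathcal{A}^*)}\right)$ for all $\phi$, i.e.\ $\mathcal{N}\equiv PM(\mathcal{A}^*)$ with $\mathcal{A}^*\in\mathcal{V}$ of size $\kappa$, as required.
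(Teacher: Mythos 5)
Your proposal is correct and follows essentially the same route as the paper: apply Lemma \ref{thm:transfer_down} together with Corollary \ref{thm:finite_support_fo_improved} to make $\mathcal{N}$ a finite-support permutation model of $\mathcal{N}[\Gamma]$, use Corollary \ref{thm:adequacy_automorphism} plus $AC$ and $\left|\mathbb{A}\right|=\kappa$ in $\mathcal{N}[\Gamma]$ to produce the pure structure $\mathcal{A}^*$, and conclude via Remark \ref{rem:pm_well-defined}. Your closing absoluteness argument (transferring $\mathcal{N}\equiv PM(\mathcal{A}^*)$ from $\mathcal{N}[\Gamma]$ back to $\mathcal{M}$, using that $\phi^{\mathcal{N}}$ depends only on the transitive class $\mathcal{N}$ and that $PM(\mathcal{A}^*)$ is determined by the shared pure universe) is exactly the step the paper leaves implicit, so this is a faithful, slightly more careful rendering of the paper's own proof.
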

We do not know what $\mathcal{A}^*$ is. We want to say that $\left\langle PM(\left\langle\mathbb{Q},\leq\right\rangle),\underline{\left\langle\mathbb{Q},\leq\right\rangle}\right\rangle\equiv\left\langle PM(\left\langle\mathbb{R},\leq\right\rangle),\underline{\left\langle\mathbb{R},\leq\right\rangle}\right\rangle$, so we need to determine $\mathcal{A}^*$.
\begin{theorem}
    \label{thm:inducing_structure_after_transfer}
    If:
    \begin{itemize}
        \item $\left\langle\mathcal{M},\mathbb{A},\in\right\rangle\models ZFA$;
        \item $\left\langle\mathcal{M}',\mathbb{A},\in\right\rangle\models ZFA+AC$;
        \item $\mathcal{A}\in\mathcal{M}$ is a structure on $\mathbb{A}$;
        \item $\mathcal{N}:=PM(\mathcal{M},Aut(\mathcal{A}),\mathcal{F}^{Aut(\mathcal{A})}_{fin})^{\mathcal{M}}$;
        \item $\mathcal{A}$ has pure language;
        \item $\mathcal{M}\models$`$\mathcal{A}$ is $\aleph_0$-homogeneous'; and 
        \item $\mathcal{N}$ is a finite-support permutation model of $\mathcal{M}'$;
    \end{itemize}
    then
    \begin{equation*}
        \mathcal{N}=PM(\mathcal{M}',Aut(\mathcal{A}),\mathcal{F}^{Aut(\mathcal{A})}_{fin})^{\mathcal{M}'}
    \end{equation*}
\end{theorem}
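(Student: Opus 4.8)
The plan is to reduce the claim to the mutual-symmetry criterion of Corollary \ref{thm:structure_perm_model_eq}, carried out inside $\mathcal{M}'$. Since $\mathcal{N}$ is assumed to be a finite-support permutation model of $\mathcal{M}'$ and $\mathcal{M}'\models AC$, Corollary \ref{thm:adequacy_automorphism} (together with Corollary \ref{cor:permutation_model_group_closure}) lets me write $\mathcal{N}=PM(\mathcal{M}',Aut(\mathcal{B}),\mathcal{F}^{Aut(\mathcal{B})}_{fin})$ for a first-order structure $\mathcal{B}$ on $\mathbb{A}$ whose relations are subsets of various $\mathbb{A}^n$; using $AC$ in $\mathcal{M}'$ to index the relations by a pure set, I may take $\mathcal{B}$ to have pure language. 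As $\mathcal{A}$ also has pure language, Corollary \ref{thm:structure_perm_model_eq} reduces the desired equality $\mathcal{N}=PM(\mathcal{M}',Aut(\mathcal{A}),\mathcal{F}^{Aut(\mathcal{A})}_{fin})$ to the two membership statements $\mathcal{A}\in\mathcal{N}$ and $\mathcal{B}\in PM(\mathcal{M}',Aut(\mathcal{A}),\mathcal{F}^{Aut(\mathcal{A})}_{fin})$.

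The first is immediate: $\mathcal{A}$ is fixed by $Aut(\mathcal{A})$ and has pure language, so it is equivariant and already lies in $\mathcal{N}=PM(\mathcal{M},Aut(\mathcal{A}),\mathcal{F}^{Aut(\mathcal{A})}_{fin})$. The whole content is therefore the second membership, and the guiding idea is that supports are governed by the $\mathcal{A}$-types, which are absolute. Write $G=Aut(\mathcal{A})^{\mathcal{M}}$ and $G'=Aut(\mathcal{A})^{\mathcal{M}'}$. Because $\mathcal{B}$ is equivariant for its own automorphism group, $\mathcal{B}\in\mathcal{N}$, and since $\mathcal{N}\subseteq\mathcal{M}$ we have $\mathcal{B}\in\mathcal{M}$; as an element of $\mathcal{N}=PM(\mathcal{M},G,\mathcal{F}^{G}_{fin})$ it has a finite support $\mathbf{a}\in\mathbb{A}^{<\omega}$ with respect to $G$ in $\mathcal{M}$. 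Because the language of $\mathcal{B}$ is pure, hence fixed pointwise, every relation $P$ of $\mathcal{B}$ is itself supported by $\mathbf{a}$ with respect to $G$.

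Now I invoke Corollary \ref{cor:supported_union_of_types} in $\mathcal{M}$, where $\mathcal{A}$ is $\aleph_0$-homogeneous (Definition \ref{def:aleph_0_homo}): each relation $P\subseteq\mathbb{A}^n$ of $\mathcal{B}$ is a union of $\mathcal{A}$-types in the parameters $\mathbf{a}$. Since the satisfaction relation is $\Delta^{ZFA}_1$ (Definition \ref{def:absoluteness}) and $\mathcal{A}$ is the same structure in both universes, the $\mathcal{A}$-types, and hence the statement ``$P$ is a union of $\mathcal{A}$-types in parameters $\mathbf{a}$'', are absolute between $\mathcal{M}$ and $\mathcal{M}'$. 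Thus the same description of each $P$ holds in $\mathcal{M}'$, and the trivial (right-to-left) direction of Corollary \ref{cor:supported_union_of_types}, which needs no homogeneity, shows that $\mathbf{a}$ supports every $P$ with respect to $G'$ in $\mathcal{M}'$. The single tuple $\mathbf{a}$ therefore supports all relations of $\mathcal{B}$ simultaneously, so $\mathbf{a}$ supports $\mathcal{B}$ with respect to $G'$; as the relations are sets of tuples of atoms they are hereditarily finitely supported, whence $\mathcal{B}\in PM(\mathcal{M}',Aut(\mathcal{A}),\mathcal{F}^{Aut(\mathcal{A})}_{fin})$. Corollary \ref{thm:structure_perm_model_eq} then yields the theorem.

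The main obstacle, and the reason the hypotheses take the form they do, is precisely this absoluteness transfer: the two universes need not compare, since $G$ and $G'$ may be incomparable as automorphisms of $\mathcal{M}$ need not survive into $\mathcal{M}'$, so I cannot move permutations between them directly. Phrasing supports through the absolute type partition, available because $\mathcal{A}$ is $\aleph_0$-homogeneous in $\mathcal{M}$, is what lets the argument bypass the group comparison entirely. The one point requiring care is that $\mathcal{B}$'s single finite support in $\mathcal{M}$ bounds the supports of all of its (possibly infinitely many) relations at once, which is what makes $\mathcal{B}$ finitely supported, rather than merely each relation separately, with respect to $G'$ in $\mathcal{M}'$.
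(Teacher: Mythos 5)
Your proposal is correct and follows essentially the same route as the paper's own proof: obtain an inducing structure $\mathcal{B}$ (the paper's $\mathcal{A}'$) via Corollary \ref{thm:adequacy_automorphism} with pure language using $AC$ in $\mathcal{M}'$, take a single finite support $\mathbf{a}$ for it in $\mathcal{M}$, decompose each predicate into $\mathcal{A}$-types over $\mathbf{a}$ via $\aleph_0$-homogeneity and Corollary \ref{cor:supported_union_of_types}, transfer this description to $\mathcal{M}'$ by $\Delta^{ZFA}_1$-absoluteness of satisfaction, and conclude with the mutual-membership criterion of Corollary \ref{thm:structure_perm_model_eq}. The only cosmetic difference is that the paper makes the absoluteness step concrete by forming the pure set $T=\left\{tp_{\mathcal{A}}(\mathbf{a}^\frown\mathbf{b}):\mathbf{b}\in P\right\}$ of syntactic types and transferring the identity $P=\left\{\mathbf{b}:\mathbf{a}^\frown\mathbf{b}\in\bigcup_{t\in T}t(\mathcal{A})\right\}$, which is exactly the absoluteness claim you assert.
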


\begin{proof}
    By Corollary \ref{thm:adequacy_automorphism}, there is $\mathcal{A}'\in\mathcal{N}$ s.t. 
    \begin{equation*}
        \mathcal{N}=PM(\mathcal{M}',Aut(\mathcal{A}'),\mathcal{F}^{Aut(\mathcal{A}')}_{fin})^{\mathcal{M}'}
    \end{equation*}
    $\mathcal{M}'\models AC$ so we may assume that the language of $\mathcal{A}'$ is pure, as if $\mathcal{A}''$ is $\mathcal{A}'$ with a relabelled language then $Aut(\mathcal{A}')=Aut(\mathcal{A}'')$ and every set is in bijection with a pure set.
    \newline $\mathcal{A}'\in\mathcal{N}$ so there is $\mathbf{a}\in\mathbb{A}^{<\omega}$ s.t. $\mathcal{M}\models$\textit{`$\mathbf{a}$ supports $\mathcal{A}'$ w.r.t. $Aut(\mathcal{A})$'}. The language of $\mathcal{A}'$ is pure so for all predicates $P$ in $\mathcal{A}'$, $\mathcal{A}'\in\mathcal{N}$ so there is $\mathbf{a}\in\mathbb{A}^{<\omega}$ s.t. $\mathcal{M}\models$\textit{`$\mathbf{a}$ supports $P$ w.r.t. $Aut(\mathcal{A})$'}. As \textit{$\mathcal{M}\models$`$\mathcal{A}$ is $\aleph_0$-homogeneous'} and by Corollary \ref{cor:supported_union_of_types}, \textit{$\mathcal{M}\models$`$P$ is a union of $\mathcal{A}$-types in parameter $\mathbf{a}$'}.
    \newline Let $T=\left\{tp_{\mathcal{A}}(\mathbf{a}^\frown\mathbf{b}):\mathbf{b}\in P\right\}$. As the language of $\mathcal{A}'$ is pure, $T$ is pure. As model truth is $\Delta^{ZFA}_1$ and $\mathcal{M}\models P=\left\{\mathbf{b}:\mathbf{a}^\frown\mathbf{b}\in \bigcup_{t\in T}t(\mathcal{A})\right\}$, $\mathcal{M}'\models P=\left\{\mathbf{b}:\mathbf{a}^\frown\mathbf{b}\in \bigcup_{t\in T}t(\mathcal{A})\right\}$. So $\mathcal{M}'\models$\textit{`$\mathbf{a}$ supports $P$ w.r.t. $Aut(\mathcal{A})$'}.
    \newline All predicates of $\mathcal{A}'$ are supported by $\mathbf{a}$ w.r.t. $\mathcal{A}$ in $\mathcal{M}'$, so, as $\mathcal{A}'$ has pure language, $\mathcal{A}'$ is supported by $\mathbf{a}$ w.r.t. $\mathcal{A}$ in $\mathcal{M}'$.
    Let:
    \begin{equation*}
        \mathcal{N}'=PM(\mathcal{M}',Aut(\mathcal{A}),\mathcal{F}^{Aut(\mathcal{A})}_{fin})^{\mathcal{M}'}
    \end{equation*}
    By the above, $\mathcal{A}'\in\mathcal{N}'$.
    Now, $\mathcal{A}\in\mathcal{N}$ and $\mathcal{A}'\in\mathcal{N}'$ so, by Corollary \ref{thm:structure_perm_model_eq}, $\mathcal{N}=\mathcal{N}'$.
\end{proof}

We now put our theorems together to conclude
\begin{theorem}
\label{thm:full_transfer}
    If:
    \begin{itemize}
        \item $\mathcal{M}\models ZFA+AC^{pure}+SVC(\mathbb{A}^{<\omega})$;
        \item $\mathcal{A}\in\mathcal{M}$;
        \begin{itemize}
            \item $\mathcal{M}\models\mathcal{A}$ is $\aleph_0$-homogeneous;
            \item $\mathcal{A}$ has pure language;
        \end{itemize}
        \item $\mathcal{N}:=PM(\mathcal{M},Aut(\mathcal{A}),\mathcal{F}^{Aut(\mathcal{A})}_{fin})^{\mathcal{M}}$; and
        \item Lemma \ref{thm:transfer_down} holds with $\kappa$ on $\mathcal{N}$;
    \end{itemize}
    then
    \begin{itemize}
        \item $\mathcal{N}$ has a forcing extension $\mathcal{N}[\Gamma]\models\left|\mathbb{A}\right|=\kappa$ with
        \begin{equation*}
            \mathcal{N}=PM(\mathcal{N}[\Gamma],Aut(\mathcal{A}),\mathcal{F}^{Aut(\mathcal{A})}_{fin})^{\mathcal{N}[\Gamma]}
        \end{equation*}
        \item there is a structure $\mathcal{A}^*\in\mathcal{V}$ s.t. $\left|\mathcal{A}^*\right|=\kappa$, $\mathcal{A^*}\equiv \mathcal{A}$ and
        \begin{equation*}
            \left\langle\mathcal{N},\mathcal{A}\right\rangle\equiv \left\langle PM( \mathcal{A}^*),\underline{\mathcal{A^*}}\right\rangle
        \end{equation*}
    \end{itemize}
\end{theorem}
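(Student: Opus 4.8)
The plan is to chain the three preceding results and let them do the heavy lifting: Lemma~\ref{thm:transfer_down} manufactures the forcing extension, Theorem~\ref{thm:inducing_structure_after_transfer} certifies that $\mathcal{A}$ is still the inducing structure after the cardinal collapse, and Remark~\ref{rem:pm_well-defined} converts the resulting model into the canonical pure construction $PM(\mathcal{A}^*)$. The real work is hypothesis-checking and tracking which universe each assertion is evaluated in.

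First I would verify that $\mathcal{N}$ is a legitimate ground model for Lemma~\ref{thm:transfer_down}. It models $ZFA$ by Theorem~\ref{theorem:permutation_model_ZFA}; it inherits $AC^{pure}$ because it shares its pure universe $\mathcal{V}$ with $\mathcal{M}$; and it models $SVC(\mathbb{A}^{<\omega})$ by Corollary~\ref{thm:finite_support_fo_improved}, since $\mathcal{M}\models SVC(\mathbb{A}^{<\omega})$ and $\mathcal{N}$ is a finite-support permutation model of $\mathcal{M}$. As Lemma~\ref{thm:transfer_down} is assumed to hold with $\kappa$ on $\mathcal{N}$, applying it (with $\mathcal{N}$ playing the role of the lemma's ground universe) yields a forcing extension $\mathcal{N}[\Gamma]\models ZFA+AC+|\mathbb{A}|=\kappa$ in which $\mathcal{N}$ is a permutation model, and the $SVC$ clause of the lemma upgrades this to $\mathcal{N}$ being a \emph{finite-support} permutation model of $\mathcal{N}[\Gamma]$.

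Next I would apply Theorem~\ref{thm:inducing_structure_after_transfer} with $\mathcal{M}$ in its first slot and $\mathcal{N}[\Gamma]$ in its second: $\mathcal{A}$ has pure language and is $\aleph_0$-homogeneous in $\mathcal{M}$, $\mathcal{N}=PM(\mathcal{M},Aut(\mathcal{A}),\mathcal{F}^{Aut(\mathcal{A})}_{fin})^{\mathcal{M}}$, and $\mathcal{N}$ is a finite-support permutation model of $\mathcal{N}[\Gamma]\models ZFA+AC$ by the previous step. This delivers the first conclusion verbatim, namely $\mathcal{N}=PM(\mathcal{N}[\Gamma],Aut(\mathcal{A}),\mathcal{F}^{Aut(\mathcal{A})}_{fin})^{\mathcal{N}[\Gamma]}$; note that the group $Aut(\mathcal{A})$ recomputed in $\mathcal{N}[\Gamma]$ is genuinely different, but the structure $\mathcal{A}$ is unchanged. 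Working now inside $\mathcal{N}[\Gamma]$, where $AC$ holds and $|\mathbb{A}|=\kappa$, I would fix a bijection of $\mathbb{A}$ with a pure set of size $\kappa$ and push $\mathcal{A}$ forward along it to obtain a pure structure $\mathcal{A}^*\in\mathcal{V}$ with $|\mathcal{A}^*|=\kappa$ and $\mathcal{N}[\Gamma]\models\mathcal{A}^*\cong\mathcal{A}$; in particular $\mathcal{A}^*\equiv\mathcal{A}$. Then Remark~\ref{rem:pm_well-defined} (applied in $\mathcal{N}[\Gamma]$, with $\mathcal{A}$ induced from $\mathcal{A}^*$ by the inverse bijection) gives a definable isomorphism $\mathcal{N}\cong PM(\mathcal{A}^*)$ carrying $\mathcal{A}$ to $\underline{\mathcal{A}^*}$ (Definition~\ref{def:permutation_model_from_structure}), so $\langle\mathcal{N},\mathcal{A}\rangle\equiv\langle PM(\mathcal{A}^*),\underline{\mathcal{A}^*}\rangle$ as evaluated in $\mathcal{N}[\Gamma]$.

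The final, and only delicate, step is to transport this elementary equivalence back to $\mathcal{M}$. I would argue that neither side depends on the ambient universe: the structure $\langle\mathcal{N},\in,\mathbb{A},\mathcal{A}\rangle$ is literally the same object in $\mathcal{M}$ and in $\mathcal{N}[\Gamma]$ (the class $\mathcal{N}$, the relation $\in$, the predicate $\mathbb{A}$, and the constant $\mathcal{A}$ are all shared), so its first-order theory is computed identically in both; and $PM(\mathcal{A}^*)$ together with $\underline{\mathcal{A}^*}$ is a construction carried out entirely inside the common pure universe $\mathcal{V}$, hence absolute across $\mathcal{M}$, $\mathcal{N}$, and $\mathcal{N}[\Gamma]$ by Remark~\ref{rem:pm_well-defined}. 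Chaining these identifications yields the equivalence in $\mathcal{M}$, as required. The main obstacle is precisely this universe-bookkeeping: the equivalence is proved where $AC$ and $|\mathbb{A}|=\kappa$ hold (namely $\mathcal{N}[\Gamma]$), and one must be careful that the reference to the inducing structure — which the orbit-finiteness applications need as a parameter — survives the passage, which is exactly what Theorem~\ref{thm:inducing_structure_after_transfer} and the $\aleph_0$-homogeneity hypothesis secure.
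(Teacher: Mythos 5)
Your proposal is correct and takes essentially the same approach as the paper: the paper gives no standalone proof of Theorem \ref{thm:full_transfer} beyond the phrase ``we now put our theorems together,'' and the intended assembly --- Lemma \ref{thm:transfer_down} to produce the forcing extension with $\mathcal{N}$ a finite-support permutation model (via the $SVC(\mathbb{A}^{<\omega})$ clause), Theorem \ref{thm:inducing_structure_after_transfer} with $\mathcal{M}'=\mathcal{N}[\Gamma]$ for the first bullet, and Remark \ref{rem:pm_well-defined} inside $\mathcal{N}[\Gamma]$ (plus absoluteness of the pure-universe construction and of relativised satisfaction for the shared transitive class $\mathcal{N}$) for the second --- is exactly what you carry out. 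Your universe-bookkeeping in the final paragraph is, if anything, more explicit than the paper's own treatment.
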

\begin{remark}
    If $\mathcal{A}^*\cong\mathcal{B}^*$ then $\left\langle PM\left(\mathcal{A}^*\right),\underline{\mathcal{A}^*}\right\rangle\cong \left\langle PM\left(\mathcal{B}^*\right),\underline{\mathcal{B}^*}\right\rangle$ so if $Th(\mathcal{A})$ is $\kappa$-categorical then we can determine $\mathcal{A}^*$ up-to isomorphism.
\end{remark}
\begin{remark}
    If $\mathcal{M}\models\left\langle\mathcal{N},\mathcal{A}\right\rangle\cong \left\langle PM( \mathcal{B}^*),\underline{\mathcal{B^*}}\right\rangle$ then
    \begin{equation*}
        \left\langle PM(\mathcal{A}^*),\underline{\mathcal{A}^*}\right\rangle\equiv \left\langle PM(\mathcal{B}^*),\underline{\mathcal{B}^*}\right\rangle
    \end{equation*}
\end{remark}
\subsection{Examples}
\label{sec:examples}
Fix a pure universe $\mathcal{V}\models ZFC$.
\begin{example}[Non-Example]
    \label{examples:non-example}
    Let
    \begin{equation*}
        \mathcal{U}:=\left\langle \left(\mathbb{R}\oplus\mathbb{Q}\right)\otimes\mathbb{R},\leq\right\rangle
    \end{equation*}
    where $\oplus,\otimes$ are co-products and products in the category of total orders.
    \newline $PM(\mathcal{U})$ is not elementarily equivalent to a permutation model of $\mathcal{V}^*(\aleph_0)$. Indeed,
    \begin{equation}\label{eq:example_uncountable_statement}
        PM(\mathcal{U})\models\begin{array}{l}
             \textit{`there exists }\mathcal{S}\textit{, a set of disjoint,}\\
             \textit{ non-empty subsets of }\mathbb{A}\textit{ with }\left|\mathcal{S}\right|=2^{\aleph_0}\textit{'}
        \end{array}
    \end{equation}
    Namely,
    \begin{equation*}
        \left\{\underline{\mathbb{R}\times\left\{x\right\}}:x\in\mathbb{R}\right\}
    \end{equation*}
    Clearly this cannot occur in a permutation model of $\mathcal{V}^*(\aleph_0)$, as all partitions of the atoms are countable and the statement in \eqref{eq:example_uncountable_statement} is upwards absolute between a permutation model and the original model.
\end{example}

\begin{example}[$\left\langle PM(\left\langle\mathbb{R},\leq\right\rangle),\underline{\left\langle\mathbb{R},\leq\right\rangle}\right\rangle$ and $\left\langle PM(\left\langle\mathbb{Q},\leq\right\rangle),\underline{\left\langle\mathbb{Q},\leq\right\rangle}\right\rangle$]
\label{examples:R}
Let:
\begin{align*}
    \mathcal{Q}&:=\left\langle\mathbb{Q},\leq\right\rangle\\
    \mathcal{R}&:=\left\langle\mathbb{R},\leq\right\rangle
\end{align*}
As $Th(\mathcal{R})=Th(\mathcal{Q})$ is $\aleph_0$-categorical, if Theorem \ref{thm:full_transfer} applies (see Appendix \ref{sec:conditions_R}), then
\begin{equation*}
    \left\langle PM(\mathcal{R}),\underline{\mathcal{R}}\right\rangle\equiv\left\langle PM(\mathcal{Q}),\underline{\mathcal{Q}}\right\rangle
\end{equation*}
\end{example}

\begin{example}[$\left\langle PM(\left\langle\mathbb{R}\oplus\left\{\star\right\}\oplus\mathbb{Q},\leq\right\rangle),\underline{\left\langle\mathbb{R}\oplus\left\{\star\right\}\oplus\mathbb{Q},\leq\right\rangle}\right\rangle$ and $\left\langle PM(\left\langle\mathbb{Q},\leq\right\rangle),\underline{\left\langle\mathbb{Q},\leq\right\rangle}\right\rangle$]
\label{examples:R''}
Let:
\begin{align*}
    \mathcal{R}'&:=\left\langle\mathbb{R}\oplus\left\{\star\right\}\oplus\mathbb{Q},\leq\right\rangle\\
    \mathcal{R}''&:=\left\langle\mathbb{R}\oplus\left\{\star\right\}\oplus\mathbb{Q},\leq,\star\right\rangle\\
    \mathcal{Q}''&:=\left\langle\mathbb{Q},\leq,0\right\rangle
\end{align*}

As $Th(\mathcal{R}'')=Th(\mathcal{Q}'')$ is $\aleph_0$-categorical, if Theorem \ref{thm:full_transfer} applies (see Appendix \ref{sec:conditions_R''}), then
\begin{equation*}
    \left\langle PM(\mathcal{R}''),\underline{\mathcal{R}''}\right\rangle\equiv\left\langle PM(\mathcal{Q}''),\underline{\mathcal{Q}''}\right\rangle
\end{equation*}
Note that
\begin{align*}
    PM(\mathcal{R}')&=PM(\mathcal{R}'')\\
    PM(\mathcal{Q})&=PM(\mathcal{Q}'')
\end{align*}
And that $\mathcal{R}'$ can be defined from $\mathcal{R}''$ in $PM(\mathcal{R}'')$ by some formula (forgetting the constant), and the same for $\mathcal{Q}$ from $\mathcal{Q}''$ in $PM(\mathcal{Q}'')$ by the same formula. So
\begin{equation*}
    \left\langle PM(\mathcal{R}'),\underline{\mathcal{R}'}\right\rangle\equiv\left\langle PM(\mathcal{Q}),\underline{\mathcal{Q}}\right\rangle
\end{equation*}
\begin{remark}
    There is no class isomorphism in $\mathcal{V}^*(2^{\aleph_0})$
    \begin{equation*}
        PM(\mathcal{R})\cong PM(\mathcal{R}')
    \end{equation*}
    This is as $\underline{\mathbb{Q}}\in PM(\mathcal{R}')$ is countably infinite in $\mathcal{V}^*(2^{\aleph_0})$.
    \newline All subsets of atoms in $PM(\mathcal{R})$ are finite or uncountably infinite in $\mathcal{V}^*(2^{\aleph_0})$ and an isomorphism $PM(\mathcal{R})\cong PM(\mathcal{R}')$ would preserve the cardinality of sets, so $\underline{\mathbb{Q}}$ will have no image under an isomorphism. Therefore, there is no class isomorphism.
\end{remark}
\end{example}
\begin{example}
\label{examples:D}
Let:
\begin{align*}
    &\mathcal{Q'}:=\left\langle\mathbb{Q},\leq,x\mapsto x \leq\sqrt{2}\right\rangle\\
    &\mathcal{D}:=\left\langle\mathbb{R}\oplus\mathbb{Q},\leq\right\rangle\\
    &\mathcal{D}':=\left\langle\mathbb{R}\oplus\mathbb{Q},\leq,x\mapsto x\in\mathbb{R}\right\rangle
\end{align*}
As $Th(\mathcal{D}')=Th(\mathcal{Q}')$ is $\aleph_0$-categorical, if Theorem \ref{thm:full_transfer} applies (see Appendix \ref{sec:conditions_D}), then
\begin{equation*}
    \left\langle PM(\mathcal{D}'),\underline{\mathcal{D}'}\right\rangle\equiv\left\langle PM(\mathcal{Q}'),\underline{\mathcal{Q}'}\right\rangle
\end{equation*}
Note that
\begin{equation*}
    PM(\mathcal{D})=PM(\mathcal{D}')
\end{equation*}
So
\begin{equation*}
    PM(\mathcal{D})\equiv PM(\mathcal{Q}')
\end{equation*}
Although $\underline{\mathcal{D}}$ is definable from $\underline{\mathcal{D}'}$ and $\underline{\mathcal{Q}}$ is definable from $\underline{\mathcal{Q}'}$ by the same formula, we cannot conclude as in the Example \ref{examples:R''}, as $PM(\mathcal{Q})\not=PM(\mathcal{Q}')$. This is because they are not elementarily equivalent. In $PM(\mathcal{Q}')$, the atoms can be written as a disjoint union of models of $DLO$, which cannot be done in $PM(\mathcal{Q})$.
\end{example}
Figure \ref{fig:examples} visualises the examples from this section.
\begin{figure}[h!]
\centering
\caption{Examples \ref{examples:non-example}, \ref{examples:R}, \ref{examples:R''}, and \ref{examples:D}}
    \label{fig:examples}
    \includegraphics[page=1]{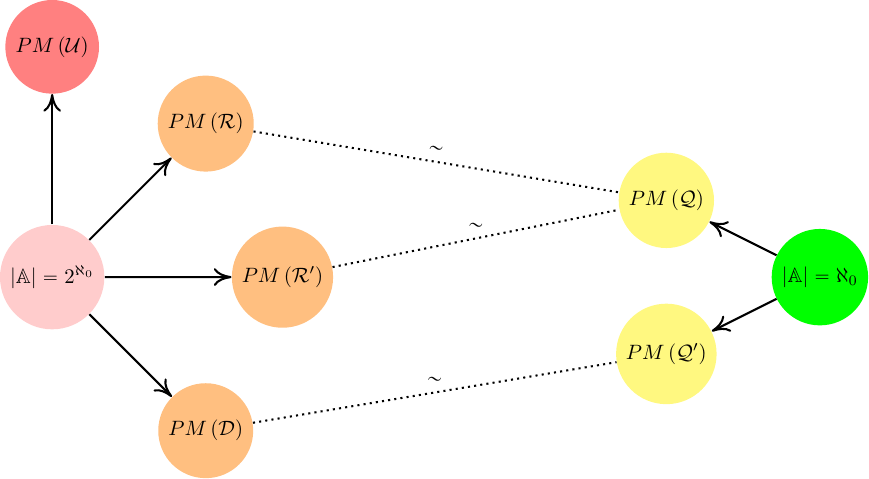}
\end{figure}
\section{Conclusion and Future Work}

\subsection{Conclusion}
Our goal was to transfer theorems about orbit-finite constructions from well-understood structures to other structures.
\newline We have developed machinery to show that finite-support permutation models induced by structures of distinct cardinalities are elementarily equivalent (Theorem \ref{thm:full_transfer}).
\newline Numerous statements about orbit-finite constructions are expressible in finite-support permutation models (Remark \ref{rem:permutation_model_expressive}) and our understanding of orbit-finite constructions relies on countability of the inducing structure, so numerous statements about well-understood structures will hold for structures that induce elementarily equivalent permutation models e.g. orbit-finite constructions on $\left\langle \mathbb{Q},\leq\right\rangle$ are well-understood, allowing us to understand orbit-finite constructions on $\left\langle \mathbb{R},\leq\right\rangle$.
\newline There are obvious criticisms of this thesis. 
We rely on $AC$ e.g. Theorem \ref{thm:perm_forcing}; Theorem \ref{thm:finite_support_fo_improved}; Theorem \ref{thm:orbit_finite_internal}; and Theorem \ref{thm:inducing_structure_after_transfer}. However, our result doesn't seem like it should require choice.
\newline We lack precise analysis into the expressibility of permutation models.
\subsection{Future Work}
The above criticisms suggest the questions:
\begin{itemize}
    \item Do our results fail without $AC^{pure}$?
    \begin{itemize}
        \item Can we identify two first Fraenkel models on the same pure universe with different theories?
        \item Can we identify some explicit failure? Can $\left\langle PM(\mathcal{R}),\mathcal{R}\right\rangle\not \equiv\left\langle PM(\mathcal{Q}),\mathcal{Q}\right\rangle$ occur without assuming $AC^{pure}$?
        \item Is there some simpler proof without assuming $AC^{pure}$?
    \end{itemize}
    \item Can we express being a finite-support permutation model without assuming $SVC(\mathbb{A}^{<\omega})$?
    \item Can we express when a set is orbit-finite without using $AC$?
\end{itemize}
Theorem \ref{thm:first_fraenkel_unique} makes no assumption about choice principles, except $AC^{pure}$. However, it is currently unknown whether such a thing is possible.
\begin{itemize}
    \item Is there a model of $ZFA+AC^{pure}+\neg SVC$? \cite{hall_permutation_2007}
\end{itemize}
Work must be done to determine what statements about orbit-finite constructions are expressible in finite-support permutation models and a list of examples should be made.

% % APPENDICES %% Starts lettered appendices, adds a heading in table of contents, and adds a page
% that just says "Appendices" to signal the end of your main text.
\startappendices
% Add or remove any appendices you'd like here:
%\include{text/appendix-1}
\section{Checking Conditions for the Examples}
\subsection{$\mathcal{R}$}
\label{sec:conditions_R}

\begin{equation*}
    Th(\mathcal{R})=Th(\mathcal{Q})=DLO
\end{equation*}
The theory of unbounded, dense, linear orders is well-known to be $\aleph_0$-categorical.
\begin{claim}
    \begin{enumerate}
        \item \label{thm:R_hom}$\mathcal{R}$ is $\aleph_0$-homogeneous.
        \item \label{thm:R_restricts}For all $ m\in\mathbb{N}$, $\mathbf{a},\mathbf{b}\in\mathbb{Q}^m$ and $\sigma'\in Aut(\mathcal{R})$, if $\sigma'\mathbf{a}=\mathbf{b}$ then there exists $\sigma\in Aut(\mathcal{R})_\mathbb{Q}$ s.t. $\sigma\mathbf{a}=\mathbf{b}$.
        \item \label{thm:R_absorbs_Q}For all $\mathcal{S}\subseteq_{fin}\mathbb{Q}$ and $\mathcal{S}'\subseteq_{fin}\mathbb{R}$, there exists $\sigma\in Aut(\mathcal{R})$ s.t. if $q\in\mathcal{S}$ then $\sigma q=q$ and $\sigma[\mathcal{S}']\subseteq \mathbb{Q}$.
    \end{enumerate}
\end{claim}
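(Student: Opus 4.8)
The unifying tool is quantifier elimination for $DLO$: since $Th(\mathcal{R}) = DLO$ admits quantifier elimination, the type $tp_{\mathcal{R}}(\mathbf{a})$ of a finite tuple is determined by its quantifier-free type, i.e.\ by its \emph{order pattern} (which coordinates are equal and which are strictly below which). The automorphisms of $\mathcal{R}$ are exactly the strictly increasing bijections of $\mathbb{R}$, and I will realise every required finite partial order-isomorphism by an explicit piecewise-linear such bijection, controlling the breakpoints and slopes to stay inside the relevant subgroup.

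For \eqref{thm:R_hom}, suppose $tp_{\mathcal{R}}(\mathbf{a}) = tp_{\mathcal{R}}(\mathbf{b})$. By quantifier elimination this means $\mathbf{a}$ and $\mathbf{b}$ share an order pattern, so their sorted distinct values $a_0 < \dots < a_{k-1}$ and $b_0 < \dots < b_{k-1}$ have equal length $k$. Mapping each $[a_i, a_{i+1}]$ linearly onto $[b_i, b_{i+1}]$ and extending by slope $1$ on the two unbounded pieces yields a strictly increasing bijection $\sigma \in Aut(\mathcal{R})$ with $\sigma(\mathbf{a}) = \mathbf{b}$; hence $\mathcal{R}$ is $\aleph_0$-homogeneous. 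For \eqref{thm:R_restricts}, the hypothesised $\sigma' \in Aut(\mathcal{R})$ with $\sigma'\mathbf{a} = \mathbf{b}$ is used only to conclude that the rational tuples $\mathbf{a}, \mathbf{b} \in \mathbb{Q}^m$ have the same order pattern. I then repeat the construction of \eqref{thm:R_hom} but with \emph{rational} data: the sorted distinct entries $a_i, b_i$ are now rational, so each linear piece has rational slope $(b_{i+1}-b_i)/(a_{i+1}-a_i)$ and rational intercept, with slope $1$ on the unbounded ends. Such a rational piecewise-linear increasing bijection, together with its inverse (again rational piecewise-linear), maps $\mathbb{Q}$ bijectively onto $\mathbb{Q}$, so it lies in the setwise stabiliser $Aut(\mathcal{R})_{\mathbb{Q}}$ while sending $\mathbf{a}$ to $\mathbf{b}$.

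For \eqref{thm:R_absorbs_Q}, list $\mathcal{S} \cup \mathcal{S}'$ in increasing order as $c_0 < \dots < c_{\ell-1}$. I choose targets $d_0 < \dots < d_{\ell-1}$ by setting $d_i := c_i$ whenever $c_i \in \mathcal{S}$ (these must be fixed, and are already rational) and otherwise choosing $d_i \in \mathbb{Q}$ strictly between the two neighbouring fixed anchors, which is possible by density of $\mathbb{Q}$ in $\mathbb{R}$, using unboundedness of $\mathbb{Q}$ for points below the least or above the greatest element of $\mathcal{S}$. This keeps the target sequence strictly increasing and rational, and any piecewise-linear increasing $\sigma \in Aut(\mathcal{R})$ with $\sigma(c_i) = d_i$ then fixes every point of $\mathcal{S}$ pointwise and satisfies $\sigma[\mathcal{S}'] \subseteq \mathbb{Q}$.

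The routine verifications (that piecewise-linear increasing maps are order-automorphisms, and that the interpolations are well defined) are immediate. The one point deserving care is the simultaneous constraint in \eqref{thm:R_absorbs_Q}: fixing the rational anchors of $\mathcal{S}$ while routing the possibly irrational points of $\mathcal{S}'$ into $\mathbb{Q}$ without disturbing their relative order. This is exactly where density and unboundedness of $\mathbb{Q}$ inside $\mathbb{R}$ are used, and it is the step I expect to be the main obstacle, since one must choose all targets coherently in a single increasing rational sequence rather than one at a time.
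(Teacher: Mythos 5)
Your proposal is correct, and for parts \eqref{thm:R_hom} and \eqref{thm:R_restricts} it is essentially the paper's own argument: an explicit increasing piecewise-linear interpolation through the tuple, with the observation that rational data forces rational slopes, intercepts, and breakpoints, so the map and its inverse preserve $\mathbb{Q}$ and hence lie in the setwise stabiliser $Aut(\mathcal{R})_\mathbb{Q}$. Where you genuinely diverge is part \eqref{thm:R_absorbs_Q}. The paper handles a single point $r\in\mathcal{S}'$ at a time: it builds two tuples $\mathbf{L}$ (containing $r$) and $\mathbf{R}$ (containing a rational in the same position), invokes quantifier elimination for $DLO$ to conclude they have the same type, applies part \eqref{thm:R_hom} to get an automorphism fixing $\mathcal{S}$ and rationalising $r$, and then inducts on $\left|\mathcal{S}'\right|$. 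You instead build the whole target sequence at once, choosing rational images for all points of $\mathcal{S}'$ coherently between the fixed anchors of $\mathcal{S}$, and interpolate a single automorphism. Your one-shot construction is arguably cleaner: it bypasses the appeal to quantifier elimination entirely (you never need ``same order pattern implies same type,'' only a direct construction of an increasing bijection), and it avoids the bookkeeping that the paper's induction quietly glosses over, namely that after the first automorphism is applied the remaining points of $\mathcal{S}'$ have moved and the newly rationalised point must be added to the set being fixed at the next stage. The paper's route, conversely, localises the density argument to the single-point case, at the price of that inductive subtlety; both are sound.
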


\begin{proof}[\ref{thm:R_hom}]
    Suppose that $\mathbf{a},\mathbf{b}\in\mathbb{R}^n$ have the same $\mathcal{R}$-type.
    Let $\left(i_j\right)_{0\leq j\leq n-1}$ be a renumbering of $\left\{0,\dots,n-1\right\}$ s.t. $\left(a_{i_j}\right)_{0\leq j\leq n-1}$ and $\left(b_{i_j}\right)_{0\leq j\leq n-1}$ are increasing.
    Define $f\in Aut(\mathcal{R})$ by:
    \begin{equation*}
        f(x):=\left\{
        \begin{array}{ccc}
             b_{i_j}\frac{x-a_{i_{j+1}}}{a_{i_j}-a_{i_{j+1}}}+b_{i_{j+1}}\frac{x-a_{i_{j}}}{a_{i_{j+1}}-a_{i_{j}}}&a_{i_j}<x<a_{i_{j+1}}&0\leq j< n-1  \\
             b_{i_j}&x=a_{i_j}&0\leq j\leq n-1\\
             x+b_{i_0}-a_{i_0}&x<a_{i_0}&\\
             x+b_{i_{n-1}}-a_{i_{n-1}}&x>a_{i_{n-1}}&
        \end{array}
        \right.
    \end{equation*}
\end{proof}
\begin{proof}[\ref{thm:R_restricts}]
    Let $\mathbf{a},\mathbf{b}\in\mathbb{Q}^m$ and $\sigma\in Aut(\mathcal{R})$ with $\sigma\mathbf{a}=\mathbf{b}$. So $\mathbf{a}$ and $\mathbf{b}$ have the same $\mathcal{R}$-type.
    \newline Take $f\in Aut(\mathcal{R})$ s.t. $f\mathbf{a}=\mathbf{b}$ as in the proof of \eqref{thm:R_hom}; $f\in Aut(\mathcal{R})_\mathbb{Q}$ as the coefficients and endpoints of linear segments are rational.
\end{proof}

\begin{proof}[\ref{thm:R_absorbs_Q}]
    Let $\mathbf{a}\in\mathbb{Q}^n$ be strictly increasing and $r\in\mathbb{R}$. We consider the cases:
    \newline If $r\in \mathbf{a}$, let $\sigma=id_\mathbb{R}$.
    \newline If $r< a_0$, $\mathbf{R:}=\left(a_0-1\right)^\frown\mathbf{a}$ and $\mathbf{L}:=\left(r\right)^\frown \mathbf{a}$ are both strictly increasing.
    \newline If $r> a_{n-1}$, $\mathbf{R}:=\mathbf{a}^\frown\left(a_{n-1}+1\right)$ and $\mathbf{L}:=\mathbf{a}^\frown\left(r\right)$ are both strictly increasing.
    \newline If $a_i<r<a_{i+1}$, $\mathbf{R}:={\mathbf{a}_{\leq i}}^\frown\left(\frac{a_i+a_{i+1}}{2}\right)^\frown\mathbf{a}_{\geq i+1}$ and $\mathbf{L}:={\mathbf{a}_{\leq i}}^\frown\left(r\right)^\frown\mathbf{a}_{\geq i+1}$ are both strictly increasing.
    \newline $DLO$ has quantifier elimination and $\mathbf{L},\mathbf{R}$ satisfy the same atomic formulae, so they have the same type. By \eqref{thm:R_hom}, there is $\sigma\in Aut(\mathcal{R})$ s.t. $\sigma \mathbf{L}=\mathbf{R}$.
    This shows \eqref{thm:R_absorbs_Q} for $\left|\mathcal{S}'\right|=1$ and we induct to get \eqref{thm:R_absorbs_Q} for $\mathcal{S}'\subseteq_{fin}\mathbb{R}$.
\end{proof}
So $\mathcal{R}$ satisfies the conditions of Theorem \ref{thm:transfer_down_sub}

\subsection{$\mathcal{R''}$}
\label{sec:conditions_R''}
$DLO$ is $\aleph_0$-categorical in the extended language with a constant symbol $c$. This is a standard result from model theory.
\newline Now $\mathcal{R}'',\mathcal{Q}''\models DLO$, which is complete as $\aleph_0$-categorical, so $Th(\mathcal{R}'')=Th(\mathcal{Q}'')$.
\begin{claim}
    \begin{enumerate}
        \item \label{thm:R''_hom}$\mathcal{R}''$ is $\aleph_0$-homogeneous.
        \item \label{thm:R''_restricts}For all $m\in\mathbb{N}$, $\mathbf{a},\mathbf{b}\in\left(\mathbb{Q}\oplus\left\{\star\right\}\oplus\mathbb{Q}\right)^m$, and $\sigma'\in Aut(\mathcal{R}'')$, if $\sigma'\mathbf{a}=\mathbf{b}$ then there exists $\sigma\in Aut(\mathcal{R}'')_{\left(\mathbb{Q}\oplus\left\{\star\right\}\oplus\mathbb{Q}\right)}$ s.t. $\sigma\mathbf{a}=\mathbf{b}$.
        \item \label{thm:R''_absorbs_Q}For all $\mathcal{S}\subseteq_{fin}\left(\mathbb{Q}\oplus\left\{\star\right\}\oplus\mathbb{Q}\right)$ and $\mathcal{S}'\subseteq_{fin}\mathbb{R}$, there exists $\sigma\in Aut(\mathcal{R}'')$ s.t. for $q\in\mathcal{S}$, $\sigma q=q$ and $\sigma[\mathcal{S}']\subseteq \left(\mathbb{Q}\oplus\left\{\star\right\}\oplus\mathbb{Q}\right)$.
    \end{enumerate}
\end{claim}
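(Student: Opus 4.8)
The plan is to exploit the rigid two-block shape of $\mathcal{R}''$. The first move is to observe that $Aut(\mathcal{R}'')$ splits as a product. Any $\pi\in Aut(\mathcal{R}'')$ must fix $\star$, since it is named by a constant symbol, and being order-preserving it must carry the initial segment $\{x:x<\star\}$ (a copy of $\langle\mathbb{R},\leq\rangle$) onto itself and the final segment $\{x:x>\star\}$ (a copy of $\langle\mathbb{Q},\leq\rangle$) onto itself. Conversely, any pair $(\rho,\tau)$ with $\rho\in Aut(\langle\mathbb{R},\leq\rangle)$ and $\tau\in Aut(\langle\mathbb{Q},\leq\rangle)$ glues, together with $\star\mapsto\star$, to an element of $Aut(\mathcal{R}'')$. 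Hence $Aut(\mathcal{R}'')\cong Aut(\langle\mathbb{R},\leq\rangle)\times Aut(\langle\mathbb{Q},\leq\rangle)$ acting blockwise, and this reduces all three parts to componentwise claims: the lower $\mathbb{R}$-block is treated exactly as in the $\mathcal{R}$ case of Appendix~\ref{sec:conditions_R}, while the upper $\mathbb{Q}$-block is handled trivially because it is already countable and contained in $\mathbb{B}:=\mathbb{Q}\oplus\{\star\}\oplus\mathbb{Q}$.

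For \ref{thm:R''_hom}, I would suppose $\mathbf{a},\mathbf{b}$ have the same $\mathcal{R}''$-type. Since $DLO$ with a constant has quantifier elimination, equality of types is equality of the atomic diagram, so each coordinate of $\mathbf{a}$ lies in the same block as the corresponding coordinate of $\mathbf{b}$ (equivalently, compares the same way to $\star$) and the internal orderings within each block agree. I then split the tuple into its $\mathbb{R}$-block, its $\star$-coordinates, and its $\mathbb{Q}$-block, apply the piecewise-linear construction of \ref{thm:R_hom} to obtain $\rho\in Aut(\langle\mathbb{R},\leq\rangle)$ carrying the $\mathbb{R}$-block of $\mathbf{a}$ to that of $\mathbf{b}$, use homogeneity of $\langle\mathbb{Q},\leq\rangle$ (Cantor back-and-forth) to obtain $\tau\in Aut(\langle\mathbb{Q},\leq\rangle)$ doing the same on the $\mathbb{Q}$-block, and glue $(\rho,\tau)$ with $\star$ fixed.

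For \ref{thm:R''_restricts} and \ref{thm:R''_absorbs_Q} I again split componentwise, noting that the whole upper $\mathbb{Q}$-block already lies inside $\mathbb{B}$, so there the identity suffices and setwise $\mathbb{B}$-invariance of that factor is automatic. On the lower $\mathbb{R}$-block the points relevant to \ref{thm:R''_restricts} lie in $\mathbb{Q}$, so the rational piecewise-linear map of \ref{thm:R_restricts}, having rational breakpoints and slopes, simultaneously witnesses the required equality and stabilises the lower copy of $\mathbb{Q}$ setwise; gluing it with $\tau=id$ and $\star\mapsto\star$ yields an element of $Aut(\mathcal{R}'')_{\mathbb{B}}$. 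For \ref{thm:R''_absorbs_Q}, the $\mathbb{R}$-block content is exactly the statement \ref{thm:R_absorbs_Q}, which I invoke to fix the rational points of $\mathcal{S}$ and push the finitely many reals of $\mathcal{S}'$ into $\mathbb{Q}$, gluing trivially above $\star$.

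The main, and only mild, obstacle is bookkeeping: one must verify that the blockwise gluing genuinely lands in $Aut(\mathcal{R}'')$, and, in \ref{thm:R''_restricts}, that the lower-block map fixes $\mathbb{B}$ \emph{setwise} rather than merely fixing the given finite tuple — this is precisely where the rationality of the piecewise-linear construction from the $\mathcal{R}$ case is essential. No new ideas beyond the $\mathcal{R}$ arguments are required; the extra constant $\star$ only rigidifies the product decomposition, and the free upper $\mathbb{Q}$-block contributes nothing since it is already countable and lies inside $\mathbb{B}$.
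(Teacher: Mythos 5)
Your blockwise strategy is essentially the paper's own: parts \ref{thm:R''_hom} and \ref{thm:R''_absorbs_Q} are argued just as in the paper (split the tuples at $\star$, invoke the $\mathcal{R}$-case constructions of Appendix \ref{sec:conditions_R} on the lower block, glue, and for \ref{thm:R''_absorbs_Q} iterate over the finitely many reals), and your product decomposition $Aut(\mathcal{R}'')\cong Aut(\left\langle\mathbb{R},\leq\right\rangle)\times Aut(\left\langle\mathbb{Q},\leq\right\rangle)$ is just an explicit statement of the gluing the paper performs. However, your treatment of part \ref{thm:R''_restricts} has a genuine error. The tuples $\mathbf{a},\mathbf{b}$ there range over all of $\mathbb{Q}\oplus\left\{\star\right\}\oplus\mathbb{Q}$, so they may have coordinates in the \emph{upper} $\mathbb{Q}$-block, and those coordinates of $\mathbf{a}$ and $\mathbf{b}$ need not be equal --- they are only related by $\sigma'$. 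Your witness glues the lower-block map with $\tau=id$ above $\star$, and the identity does not carry the upper coordinates of $\mathbf{a}$ to those of $\mathbf{b}$. Concretely, take $m=1$, $\mathbf{a}=(q_0)$ and $\mathbf{b}=(q_1)$ with $q_0\neq q_1$ both in the upper block, and let $\sigma'$ be the identity on $\mathbb{R}\oplus\left\{\star\right\}\oplus\emptyset$ glued with an automorphism of the upper $\left\langle\mathbb{Q},\leq\right\rangle$ sending $q_0$ to $q_1$; your recipe then returns a $\sigma$ fixing $q_0$, so $\sigma\mathbf{a}\neq\mathbf{b}$. The observation you lean on --- that the upper block lies inside what you call $\mathbb{B}$, so setwise $\mathbb{B}$-invariance is automatic on that factor --- is true, but it only removes the need for the upper map to be the identity; it does not excuse the upper map from moving $\mathbf{a}$ correctly.

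The repair is exactly what the paper does: glue the rational piecewise-linear map $f\in Aut(\mathcal{R})_{\mathbb{Q}}$ on the lower block with $\sigma'$ restricted to $\emptyset\oplus\left\{\star\right\}\oplus\mathbb{Q}$ (or with any automorphism of the upper copy of $\mathbb{Q}$ carrying the upper coordinates $\mathbf{a}^r$ to $\mathbf{b}^r$). Since every element of $Aut(\mathcal{R}'')$ maps the upper block onto itself, this restriction is an automorphism of the upper $\left\langle\mathbb{Q},\leq\right\rangle$; it sends $\mathbf{a}^r$ to $\mathbf{b}^r$ because $\sigma'\mathbf{a}=\mathbf{b}$, and it preserves $\mathbb{B}$ setwise for free, so the glued map lies in $Aut(\mathcal{R}'')_{\left(\mathbb{Q}\oplus\left\{\star\right\}\oplus\mathbb{Q}\right)}$ and witnesses the claim. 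With this one change your argument coincides with the paper's proof; the rest of your write-up (including the quantifier-elimination justification that types in $\mathcal{R}''$ are determined by order patterns relative to $\star$, where the paper instead relativises quantifiers to the blocks) is sound.
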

\begin{proof}[\ref{thm:R''_hom}]
    Suppose that $\mathbf{a},\mathbf{b}\in\left(\mathbb{R}\oplus\left\{\star\right\}\oplus\mathbb{Q}\right)^n$.
    Let:
    \begin{align*}
        \mathbf{a}^l&=\left(a_i\right)_{a_i<\star}\\
        \mathbf{b}^l&=\left(b_i\right)_{b_i<\star}\\
        \mathbf{a}^r&=\left(a_i\right)_{a_i>\star}\\
        \mathbf{b}^r&=\left(b_i\right)_{b_i>\star}
    \end{align*}
    As the types are the same, the domains of $\mathbf{a}^l,\mathbf{b}^l$ match and the domains of $\mathbf{a}^r,\mathbf{b}^r$ match.
    \newline Now consider $\mathbf{a}^l,\mathbf{b}^l$ as tuples in $\mathcal{R}$, their types match (as formulae on $\mathcal{R}$ rise to formulae in $\mathcal{R}''$ by localising quantifiers to $<\star$) so there is automorphism $f\in Aut(\mathcal{R})$ with $f\mathbf{a}^l=\mathbf{b}^l$.
    \newline Similarly, there is automorphism $f'\in Aut(\mathcal{Q})$ with $f\mathbf{a}^r=\mathbf{b}^r$.
    \begin{equation*}
        f\cup f'\cup\left\{\left\langle\star,\star\right\rangle\right\}\in Aut(\mathcal{R}'')
    \end{equation*}
    This maps $\mathbf{a}$ to $\mathbf{b}$.
\end{proof}
\begin{proof}[\ref{thm:R''_restricts}]
    Let $\mathbf{a},\mathbf{b}\in\left(\mathbb{Q}\oplus\left\{\star\right\}\oplus\mathbb{Q}\right)^m,\sigma'\in Aut(\mathcal{R}'')$ s.t. $\sigma' \mathbf{a}=\mathbf{b}$. Let
    \begin{align*}
        \mathbf{a}^l&=\left(a_i\right)_{a_i<\star}\\
        \mathbf{b}^l&=\left(b_i\right)_{b_i<\star}
    \end{align*}
    They have the same $\mathcal{R}$-type, so by \ref{sec:conditions_R}, find $f\in Aut(\mathcal{R})_\mathbb{Q}$ with $f\mathbf{a}^l=\mathbf{b}^l$.
    \begin{equation*}
        f\cup\left(\sigma'\restriction\left(\emptyset\oplus\left\{\star\right\}\oplus\mathbb{Q}\right)\right)\in Aut(\mathcal{R}'')_{\left(\mathbb{Q}\oplus\left\{\star\right\}\oplus\mathbb{Q}\right)}
    \end{equation*}
    This maps $\mathbf{a}$ to $\mathbf{b}$.
\end{proof}
\begin{proof}[\ref{thm:R''_absorbs_Q}]
    Let $\mathbf{a}\in\left(\mathbb{Q}\oplus\emptyset\oplus\emptyset\right)^n$ and $r\in\left(\mathbb{R}\setminus\mathbb{Q}\oplus\emptyset\oplus\emptyset\right)$.
    As in \ref{sec:conditions_R}, find $f\in Aut(\mathcal{R})$ s.t. $f\mathbf{a}=\mathbf{a}$ and $fr\in\left(\mathbb{Q}\oplus\emptyset\oplus\emptyset\right)$.
    \begin{equation*}
        f\cup id_{\left(\mathbb{\emptyset}\oplus\left\{\star\right\}\oplus\mathbb{Q}\right)}\in Aut(\mathcal{R}'')
    \end{equation*}
    This fixes $\mathbf{a}, \star,$ and $\emptyset\oplus\emptyset\oplus\mathbb{Q}$. Applying this repeatedly by induction gives us the result.
\end{proof}
\subsection{$\mathcal{D}$}
\label{sec:conditions_D}
The proofs here are almost the same as \ref{sec:conditions_R''}, but removing references to $\star$.
So it suffices to show that $Th(\mathcal{D}')=Th(\mathcal{Q}')$ is $\aleph_0$-categorical and that $PM(\mathcal{D})=PM(\mathcal{D}')$.
The later is obvious: predicates of $\mathcal{D}$ and predicates of $\mathcal{D}'$ are mutually equivariant, since their automorphism groups fix $\mathbb{R}\oplus\emptyset$ and $\emptyset\oplus\mathbb{Q}$; they have pure language; apply Theorem \ref{thm:different_structure_model}.
\newline For the latter, let $T$ be the theory in the language with binary predicate $\leq$ and unary predicate $P$ saying that:
\begin{itemize}
    \item If $P(x)$ and $\neg P(y)$ then $x\leq y$
    \item $\leq$ is a $DLO$ on $P$
    \item $\leq$ is a $DLO$ on $\neg P$
\end{itemize}
Note that $\mathcal{D}', \mathcal{Q}'\models T$.
Let $\mathcal{A},\mathcal{B}\models T$ be countable.
Let:
\begin{align*}
    \mathcal{A}^l&:=\left\langle P^\mathcal{A},\leq^\mathcal{A}\right\rangle\\
    \mathcal{A}^r&:=\left\langle A\setminus P^\mathcal{A},\leq^\mathcal{A}\right\rangle\\
    \mathcal{B}^l&:=\left\langle P^\mathcal{B},\leq^\mathcal{B}\right\rangle\\
    \mathcal{B}^r&:=\left\langle B\setminus P^\mathcal{B},\leq^\mathcal{B}\right\rangle\\
    \mathcal{A}^l,\mathcal{A}^r,\mathcal{B}^l,\mathcal{B}^r&\models DLO
\end{align*}
So there are isomorphisms
\begin{align*}
    f&:\mathcal{A}^l\cong\mathcal{B}^l\\
    g&:\mathcal{A}^r\cong\mathcal{B}^r
\end{align*}
So
\begin{equation*}
    f\cup g:\mathcal{A}\cong\mathcal{B}
\end{equation*}
So $T$ is $\aleph_0$-categorical and complete.
% %%%% REFERENCES

\setlength{\baselineskip}{0pt} % JEM: Single-space References

{\renewcommand*\MakeUppercase[1]{#1}
	\printbibliography[heading=bibintoc,title={\bibtitle}]}

\end{document}